\documentclass[11pt,reqno]{article}
\usepackage{graphicx}
\usepackage{amssymb}
\usepackage{amsmath}
\usepackage{amsthm}
\usepackage{amsfonts}
\usepackage{url}
\usepackage{hyperref}
\usepackage{breakurl}
\usepackage[T1]{fontenc}        

\newcommand{\comment}[1]{}
\newcommand{\raisecomma}{\raisebox{2pt}{$,$}}
\newcommand{\raisedot}{\raisebox{2pt}{$.$}}
\newcommand{\sign}{\text{sgn}}

\newcommand{\Dbar}{{\mathcal R}} 
\newcommand{\Had}{{\mathcal H}}

\newcommand{\R}{{\mathbb R}}

\newcommand{\Z}{{\mathbb Z}}

\newcommand{\Prob}{{\mathbb P}} 
\newcommand{\E}{{\mathbb E}}    
\newcommand{\V}{{\mathbb V}}    

\newcommand{\ve}{\varepsilon}

\newtheorem{theorem}{Theorem}
\newtheorem{corollary}{Corollary}
\newtheorem{lemma}{Lemma}
\newtheorem{proposition}{Proposition}

\newtheorem{remark}{Remark}

\begin{document}
\bibliographystyle{plain}
\title{~\\[-60pt]
Lower bounds on maximal determinants\\
 of binary matrices\\ 	
 via the probabilistic method
}
\author{Richard P.\ Brent\\
Australian National University\\
Canberra, ACT 0200,
Australia\\
\and
Judy-anne H.\ Osborn\\
The University of Newcastle\\
Callaghan, NSW 2308,
Australia\\
\and
Warren D.\ Smith\\
Center for Range Voting\\			
21 Shore Oaks Drive, Stony Brook\\ NY 11790, USA\\
}

\date{\today}

\maketitle
\thispagestyle{empty}                   

\begin{abstract}
Let $D(n)$ be the maximal determinant for $n \times n$ $\{\pm 1\}$-matrices,
and $\Dbar(n) = D(n)/n^{n/2}$ be the ratio of $D(n)$ to the Hadamard
upper bound.
We give several new lower bounds on $\Dbar(n)$ in terms of $d$, where
$n = h+d$, 
$h$ is the order of a Hadamard matrix,
and $h$ is maximal subject to $h \le n$. 
A relatively simple bound is
\[
\Dbar(n) \ge \left(\frac{2}{\pi e}\right)^{d/2}
	\left(1 - d^2\left(\frac{\pi}{2h}\right)^{1/2}\right)
 \;\text{ for all }\; n \ge 1.
\]
An asymptotically sharper bound is
\[
\Dbar(n) \ge \left(\frac{2}{\pi e}\right)^{d/2}
	\exp\left(d\left(\frac{\pi}{2h}\right)^{1/2} +
	\; O\left(\frac{d^{5/3}}{h^{2/3}}\right)\right).
\]
We also show that
\[
\Dbar(n) \ge \left(\frac{2}{\pi e}\right)^{d/2}
\]
if $n \ge n_0$ and $n_0$ is sufficiently large,
the threshold $n_0$ being independent of $d$, 
or for all $n\ge 1$ if $0 \le d \le 3$
(which would follow from the Hadamard conjecture).
The proofs depend on the probabilistic method,
and generalise previous
results that were restricted to the cases $d=0$ and $d=1$.
\end{abstract}

\pagebreak[4]

\section{Introduction}		\label{sec:intro}

Let $D(n)$ be the maximal determinant possible for an $n\times n$
matrix with elements in $\{\pm1\}$.
Hadamard~\cite{Hadamard}
\footnote{For
earlier contributions by Desplanques, 
L\'evy, 
Muir, 
Sylvester 
and Thomson (Lord Kelvin), 
see~\cite{Muir26,Sylvester} and~\cite[pg.~384]{MS}.}
proved that $D(n) \le n^{n/2}$, and the
\emph{Hadamard conjecture} is that a matrix achieving this upper bound
exists for each positive integer $n$ divisible by four.
The function $\Dbar(n) := D(n)/n^{n/2}$ 
is a measure of the sharpness of the Hadamard bound.
Clearly $\Dbar(n) = 1$ if a Hadamard matrix of order $n$
exists; otherwise $\Dbar(n) < 1$.  In this paper we
give lower bounds on $\Dbar(n)$.

Let $h$ be the maximal order of a Hadamard matrix subject to $h \le n$.
Then $d = n-h$ can be regarded as the ``gap'' between $n$ and the nearest
(lower) Hadamard order. We are interested the case that $n$ is not a 
Hadamard order, so (usually) $d > 0$ and $\Dbar(n) < 1$.

Except in the cases $d \in \{0,1\}$, previous lower bounds on $\Dbar(n)$
tended to zero as $n \to \infty$.
For example, the well-known bound of
Clements and Lindstr\"om~\cite[Corollary to Thm.~2]{CL65} shows that
$\Dbar(n) > (3/4)^{n/2}$, and \hbox{\cite[Thm.~9]{rpb249}} shows that
$\Dbar(n) \ge n^{-\delta/2}$, where $\delta := |n-h|$ (in this result
$h > n$ is allowed, so it is possible that $\delta < d$).
In contrast, we show that, for fixed~$d$,
$\Dbar(n)$ is bounded below by a positive constant
$\kappa_d$. We also show that, for all sufficiently large $n$,
$\Dbar(n) \ge (\pi e/2)^{-d/2}$. We conjecture that the 
``sufficiently large'' condition can be omitted; this is certainly true if
$d \le 3$.

Our lower bound proofs use the probabilistic method
pioneered by Erd\H{o}s (see for example~\cite{AS,ES}). In many cases the
probabilistic method gives sharper bounds than have been obtained by
deterministic methods. 
The probabilistic method does
not appear to have been applied previously to the Hadamard maximal
determinant problem, except (implicitly) in the case $d = 1$ (so $n \equiv 1
\bmod 4$); in this case the concept of \emph{excess} has been used~\cite{FK}, 
and lower bounds on the maximal excess were obtained by the probabilistic
method~\cite{Best,ES,FK}.  
In a sense our results generalise this idea, although we do not
directly generalise the concept of {excess} to cover $d > 1$.

Specifically, in our probabilistic construction
we adjoin $d$ extra columns to an $h \times h$ Hadamard matrix $A$,
and fill their $h \times d$ entries with random signs
obtained by independently tossing fair coins.
Then we adjoin $d$ extra rows, and fill their $d \times (h+d)$ entries with
$\pm 1$ values chosen deterministically 
in a way intended to approximately 
maximize the determinant of the final matrix $\widetilde{A}$.
To do so, we use the fact 
that this 
determinant can be expressed in terms 
of the  $d \times d$ Schur complement $(\widetilde{A}/A)$ of
$A$ in $\widetilde{A}$ (see~\S\ref{sec:Schur}).

In the case $d=1$, this method is essentially the same as the known method
involving the {excess} of the Hadamard matrix, and leads to the same
bounds that can be obtained by bounding the excess in a probabilistic
manner, as in~\cite{Best,BS,FK}. In this sense our method is
a generalisation of methods based on excess.

The structure of the paper is as follows:

\smallskip\noindent
\S\ref{sec:intro}:		Introduction\\
\S\ref{sec:notation}:		Notation\\
\S\ref{sec:Schur}:		The Schur complement lemma\\
\S\ref{sec:binomial}:		Some binomial sums\\
\S\ref{sec:construction}:	The probabilistic construction\\
\S\ref{sec:gaps}:		Gaps between Hadamard orders\\
\S\ref{sec:lemmas}:		Preliminary results\\
\S\ref{sec:bounds}:		Probabilistic lower bounds\\
\S\ref{sec:numerical}:		Numerical examples\\

Most of these section headings are self-explanatory.
\S\ref{sec:construction} describes the probabilistic construction
which is common to all our lower bound results.
\S\ref{sec:gaps} summarises some known results on gaps between Hadamard
orders. These results are relevant for bounding $d$ as a function of~$n$.

The main lower-bound results for $\Dbar(n)$,
which we now outline, are given in \S\ref{sec:bounds}.

Theorem~\ref{thm:small_d} obtains a lower bound on the expected
value of the determinant in a direct manner, by simply expanding the
determinant of the Schur complement as a sum of products. The difficulty
with this approach is that we have to consider $d!$ terms.
The ``diagonal'' term is expected to be larger than the other terms, but
in general
only by a factor of order $h$, so to obtain good bounds we need
$h$ of order at least $d!$.
Thus, this approach is only useful for small $d$.
Of course, the Hadamard conjecture implies 
that $d\le 3$.  However, what can currently
be \emph{proved} about gaps between Hadamard
orders is much weaker than this (see \S\ref{sec:gaps}).

For $d \le 3$, Theorem~\ref{thm:small_d}
shows that $\Dbar(n) \ge (\pi e/2)^{-d/2} > 1/9$, coming close to
Rokicki \emph{et al}'s conjectured lower bound of $1/2$
(see~\cite{Rokicki}),
and improving on earlier results~\cite{rpb249,CL65,Cohn63,KMS00,LL}
that failed to obtain a constant lower bound on $\Dbar(n)$ for 
$d \in \{2,3\}$.

Theorems~\ref{thm:lower_bd_via_Chebyshev}--\ref{thm:two_parameter_bound}
give slightly weaker bounds than Theorem~\ref{thm:small_d}, but
under less restrictive conditions on $d$ and $h$. For example,
Theorem~\ref{thm:lower_bd_via_Chebyshev} gives a nontrivial lower bound
whenever $h > \pi d^4/2$. By the results of Livinskyi~\cite{Livinskyi}
described in~\S\ref{sec:gaps}, this
condition holds for all sufficiently large~$n$.
Theorems~\hbox{\ref{thm:Chebyshev_Lovasz}--\ref{thm:two_parameter_bound}} 
further
weaken the conditions on $d$ and $h$. For example,
Theorem~\ref{thm:Cantelli_Hoeffding_Lovasz} is always applicable
if $h \ge 664$ and $d \ge 2$ (see Remark~\ref{remark:one_or_the_other}). For
$n <  668$ the Hadamard conjecture holds~\cite{KR}, so $d \le 3$ and
Theorem~\ref{thm:small_d} applies.
Thus, at least one of Theorem~\ref{thm:small_d}
or Theorem~\ref{thm:Cantelli_Hoeffding_Lovasz} always gives a nontrivial
lower bound on $\Dbar(n)$; this lower bound
is of order $(\pi e/2)^{-d/2}$.

To prove
Theorems~\ref{thm:lower_bd_via_Chebyshev}--\ref{thm:two_parameter_bound}
we need lower bounds on the determinant of a diagonally dominant
matrix.  Such bounds are provided by
Lemmas~\ref{lemma:super_duper_perturbation_bound}--
\ref{lemma:optimal_pert_bound} in~\S\ref{subsec:perturbation}.
The proofs of
Theorems~\ref{thm:lower_bd_via_Chebyshev}--\ref{thm:two_parameter_bound}
also require an upper bound on the variance of the diagonal elements
occurring in our probabilistic construction.  This is provided by
Lemma~\ref{lemma:variance}, which gives an exact formula for
the variance.

Other ingredients in the proofs of
Theorems~\ref{thm:lower_bd_via_Chebyshev}--\ref{thm:two_parameter_bound}
are the ``Lov\'asz Local Lemma'' of Erd\H{o}s and Lov\'asz~\cite{EL}
(for the proofs of 
Theorems~\ref{thm:Chebyshev_Lovasz}--\ref{thm:Cantelli_Hoeffding_Lovasz}),
and the well-known inequalities of
Hoeffding~\cite{Hoeffding} (for
Theorems~\ref{thm:Cantelli_Hoeffding_Lovasz}--\ref{thm:two_parameter_bound}),
Chebyshev~\cite{Chebyshev} (for 
Theorems~\ref{thm:lower_bd_via_Chebyshev}--\ref{thm:Chebyshev_Lovasz})
and Cantelli~\cite{Cantelli} (for
Theorems~\ref{thm:Cantelli_Hoeffding_Lovasz}--\ref{thm:two_parameter_bound}).

Finally, Theorem~\ref{thm:almost_all_n} gives the result
that
$\Dbar(n) \ge (\pi e/2)^{-d/2}$ for all $d \ge 0$ and
$n \ge n_0$, where $n_0$ is independent of $d$.
This follows from (a corollary of) Theorem~\ref{thm:two_parameter_bound}
by using known results on gaps between Hadamard orders.
We conjecture that the condition $n \ge n_0$ is unnecessary, and
that the inequality holds for all positive~$n$. The conjecture could
be proved/disproved by a finite (albeit large) computation, since
we have an explicit upper bound on $n_0$.

Theorems~\ref{thm:lower_bd_via_Chebyshev}--\ref{thm:Cantelli_Hoeffding_Lovasz}
are not quite strong enough to imply Theorem~\ref{thm:almost_all_n}. This is
because
Theorems~\ref{thm:lower_bd_via_Chebyshev}--\ref{thm:Cantelli_Hoeffding_Lovasz}
all involve a multiplicative ``correction factor'' of the form
\hbox{$(1 - O_d(1/h^{1/2}))$} in the lower bound~--
for example, the bounds~\eqref{eq:fudge1}--\eqref{eq:fudge2}
involve a correction factor $(1 - O(d^2/h^{1/2}))$.
Theorem~\ref{thm:two_parameter_bound} improves the ``correction factor'' 
to $(1 - O(d^{5/3}/h^{2/3}))$, which is close enough to unity to imply
Theorem~\ref{thm:almost_all_n} (the critical point being that the
exponent of $h$ is now greater than $1/2$). The price that we pay for this
improvement is that Theorem~\ref{thm:two_parameter_bound} involves a
parameter ($\lambda$) which must be chosen in a (close to) optimal way to give a
correction factor of the desired form, whereas 
Theorems~\ref{thm:lower_bd_via_Chebyshev}--\ref{thm:Cantelli_Hoeffding_Lovasz}
are explicit and do not involve any free parameters.
\pagebreak[3]

The constant $\pi e/2$ occurring in the bound $(\pi e/2)^{-d/2}$
of Theorem~\ref{thm:almost_all_n} is unlikely to be
optimal.  {From} the upper bounds of Barba~\cite{Barba33},
Ehlich~\cite{Ehlich64a,Ehlich64b} and Wojtas~\cite{Wojtas64} for $d \le 3$,
it seems plausible
that the optimal constant is $e/2$ and that the factor $\pi$ in our results
is a consequence of using the probabilistic method, which in some sense
estimates the mean rather than the maximum of a certain set of determinants.

\comment{
So far, the result has only been probabilistic, but since the success
probability is positive, that tells us that a suitable configuration of coin
tosses must {\it deterministically exist} so that an $n \times n$ sign
matrix indeed must exist satisfying our lower bound.  (And indeed the
argument could be derandomized using the ``method of conditional
expectations'' which would show there is a polynomial-time algorithm to find
a suitable $n \times n$ matrix?)  WDS

NB: I believe this if $d=1$ but I'm not sure about it if $d > 1$ because
of our way of handling the off-diagonal elements in the Schur complement.
That's why I've commented out this paragraph and made the following
weaker statement. RPB
} 

It is an open question whether our probabilistic construction can be
derandomized to give deterministic polynomial-time algorithms to construct
matrices satisfying the lower bounds given in \S\ref{sec:bounds}.
However, in practice we have been able to construct such
matrices using randomized algorithms based on the probabilistic
construction. The main practical difficulty is in constructing a Hadamard
matrix of maximal order $h \le n$, since numerous constructions for
Hadamard matrices are scattered throughout the literature.

In the special case $d=1$ our arguments simplify,
because there is no need to consider a nontrivial Schur complement or to
deal with the contribution of the off-diagonal elements.
This case was already considered by
Brown and Spencer~\cite{BS}, Erd\H{o}s and Spencer~\cite[Ch.~15]{ES}, 
and (independently) by 
Best~\cite{Best}; see also~\cite[\S2.5]{AS}
and ~\cite[Problem A4]{Putnam74}.
The consequence for lower bounds on $\Dbar(n)$ when
$n \equiv 1 \bmod 4$ was exploited by
Farmakis and Kounias~\cite{FK}, 
and an improvement using $3$-normalized Hadamard
matrices was considered by Orrick and Solomon~\cite{OS07}.
However, $3$-normalization does not seem to be helpful in the context of our
probabilistic construction.

Some of the results of this paper first
appeared in the (unpublished) manuscript~\cite{rpb253}. However, at that
time we did not have a proof of equation~\eqref{eq:sigma2} in
Lemma~\ref{lemma:variance} below (which gives the variance of the diagonal
terms in our probabilistic construction),
so we had to avoid using the variance and instead 
use Lemma~$15.2$ of~\cite{ES} (Lemma~$12$ of~\cite{rpb253}), 
which generally gives weaker results with more complicated proofs.

\subsubsection*{Acknowledgements}
We thank Robert Craigen for informing us of the work of his student Ivan
Livinskyi, and Will Orrick for his comments and for providing a copy of the
unpublished report~\cite{Rokicki}. Dragomir {\DJ}okovi\'c and Ilias
Kotsireas shared their list of known small Hadamard orders, which was
useful for checking our numerical computations.
The Mathematical Sciences Institute (ANU) provided computational
support on the cluster {\tt orac}.
The first author was supported in part 
by Australian Research Council grant DP140101417.		
\pagebreak[3]

\section{Notation}				\label{sec:notation}

We use the usual ``$O$'' and ``$o$'' notations.
$f \ll g$ means the same as $f = O(g)$,
and $f \gg g$ means the same as $g \ll f$.  The notations
$f \asymp g$ or $f = \Theta(g)$ mean that both $f \ll g$ and $f \gg g$.
Finally, $f = O_\delta(g)$ means that $f = O(g)$ when a parameter
$\delta$ is fixed, 
but the implicit constant may depend on $\delta$.

The binomial coefficient $\binom{m}{k}$ is defined to be zero if $k < 0$
or $k > m$. Thus, we can often avoid specifying upper and lower limits
of sums explicitly.

As in \S\ref{sec:intro}, $D(n)$ is the maximum determinant function
and $\Dbar(n) := D(n)/n^{n/2}$ is its normalization.
The set of orders of all Hadamard matrices is denoted by 
${\Had}$. 
If $n$ is given, then
$h\in\Had$ is always chosen to be maximal subject to $h \le n$,
so $d := n-h$ is minimal.
The case $d=0$ is trivial because then the Hadamard bound
applies, so we assume $d>0$ if this makes the statement of
the results simpler. We assume
$n \ge h\ge 4$ to avoid small special cases~--
it is easy to check if the results also hold for $1 \le n \le 3$ and 
$h \in \{1,2\}$.
In the asymptotic results, we can assume
that $d \ll h$. In fact, it follows from~\eqref{eq:gamma_Livinskyi} below that
$d \ll h^{1/6}$. 

Constants are denoted by $c$,
$c_1$, $c_2$, $\alpha$, $\beta$, etc.
Unless otherwise specified, $\ve$ is an arbitrarily small positive constant,
and $c = \sqrt{2/\pi} \approx 0.7979$.

Matrices are denoted by capital letters $A$ etc, and their elements by the
corresponding lower-case letters, e.g.\ $a_{i,j}$
or simply $a_{ij}$ if the meaning is clear.
 
The probability of an event $S$
is denoted by $\Prob[S]$,
the expectation of a random variable $X$ is denoted by $\E[X]$,
and the variance of $X$ by $\V[X]$.

$\mu(h)$ and $\sigma(h)^2$ are respectively the mean and variance of the
``diagonal'' elements occurring in our probabilistic construction~-- for
precise definitions see~\S\ref{subsec:variance}. We write simply
$\mu$ and $\sigma^2$ if $h$ is clear from the context.

\section{The Schur complement lemma}	\label{sec:Schur}

Let 
\begin{equation}		\label{eq:block_matrix}
\widetilde{A} = \left[\begin{matrix} A & B\\ C & D\\
\end{matrix}\right]
\end{equation} be an $n \times n$ matrix written in 
block form, where $A$ is $h \times h$, and $n = h + d > h$.
The \emph{Schur complement $(\widetilde{A}/A)$}
of $A$ in $\widetilde{A}$ is the $d \times d$
matrix 
$D - CA^{-1}B$ (see for example~\cite{Cottle,Higham}).
It is relevant to our problem in view of the following well-known
lemma~\cite{Brualdi,Schur}.
\begin{lemma}[Schur complement]	\label{lemma:Schur}
If $\widetilde{A}$ is as in~\eqref{eq:block_matrix}
and $A$ is nonsingular, then
\[\det(\widetilde{A}) = \det(A)\det(D - CA^{-1}B).\]
\end{lemma}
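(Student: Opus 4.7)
The plan is to produce an explicit block LU-type factorization of $\widetilde A$ in which one factor is block unit-lower-triangular (hence has determinant $1$) and the other is block upper-triangular (hence has determinant equal to the product of the determinants of its diagonal blocks). Because $A$ is assumed invertible, block Gaussian elimination on $\widetilde A$ used to clear the off-diagonal block $C$ naturally produces the Schur complement as the new lower-right block, so the candidate factorization is
\[
\widetilde A \;=\; \begin{pmatrix} I_h & 0 \\ CA^{-1} & I_d \end{pmatrix}\begin{pmatrix} A & B \\ 0 & D - CA^{-1}B \end{pmatrix}.
\]

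The first step I would carry out is to verify this identity by multiplying the two factors block-by-block: the $(1,1)$ block gives $A$, the $(1,2)$ block gives $B$, the $(2,1)$ block gives $CA^{-1}A = C$, and the $(2,2)$ block gives $CA^{-1}B + (D-CA^{-1}B) = D$, so the product equals $\widetilde A$. The second step is to take determinants and use multiplicativity: the left-hand factor is block lower triangular with identity diagonal blocks, so its determinant is $1$; the right-hand factor is block upper triangular with diagonal blocks $A$ and $D-CA^{-1}B$, so its determinant is $\det(A)\det(D-CA^{-1}B)$. Combining these yields the claimed formula.

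The only substantive ingredient is the fact that the determinant of a block triangular matrix with square diagonal blocks factors as the product of the determinants of those blocks; I would either cite this as standard or, if a self-contained treatment is preferred, derive it by reducing to the ordinary (scalar) triangular case via row operations within each block, which do not change the determinant beyond the already-accounted-for factors. Since $A$ is invertible this reduction is unobstructed.

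There is no real obstacle: the lemma is essentially a one-line computation once the factorization is written down. The only place where care is needed is in ensuring the use of $A^{-1}$ is legitimate, which is exactly the hypothesis that $A$ be nonsingular. Consequently, the proof will be brief, and the main value of the statement lies in how the Schur complement $(\widetilde A / A) = D - CA^{-1}B$ is subsequently exploited in the probabilistic construction of \S\ref{sec:construction} to reduce the problem of bounding $\det(\widetilde A)$ to bounding the determinant of a $d \times d$ random matrix.
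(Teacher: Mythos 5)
Your proof is correct and uses exactly the same block factorization as the paper, namely $\widetilde A = \bigl[\begin{smallmatrix} I & 0\\ CA^{-1} & I\end{smallmatrix}\bigr]\bigl[\begin{smallmatrix} A & B\\ 0 & D-CA^{-1}B\end{smallmatrix}\bigr]$, followed by taking determinants. The only difference is that you spell out the block-by-block verification and the block-triangular determinant fact, which the paper leaves implicit.
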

\begin{proof}
Take determinants of each side in the identity
\[
\left[\begin{matrix} A & B\\ C & D\\ \end{matrix}\right] =
\left[\begin{matrix} I & 0\\ CA^{-1} & I\\ \end{matrix}\right]
\left[\begin{matrix} A & B\\ 0 & D-CA^{-1}B\\ \end{matrix}\right]\,.
\]
\end{proof}
In our application of Lemma~\ref{lemma:Schur}, $A$ is a Hadamard
matrix of order~$h$, so $\det(A) = h^{h/2}$ (without loss of generality we
can assume that the sign is positive).  Thus, to maximize
$\det(\widetilde{A})$ for given $A$,
we need to maximize $\det(D - CA^{-1}B)$.
We can\,not generally find the exact maximum, but we can find
lower bounds on the maximum by using the probabilistic method.
For example, the mean is always a lower bound on the maximum.

\pagebreak[3]
\section{Some binomial sums}		\label{sec:binomial}

Lemma~\ref{lemma:Best} is a binomial sum which has appeared
several times in the literature, e.g.~Alon and Spencer~\cite[\S2.5]{AS},
Best~\cite[proof of Theorem 3]{Best}, Brown and Spencer~\cite{BS},
Erd\H{o}s and Spencer~\cite[proof of Theorem 15.2]{ES}.
It was also a problem in the 
1974 Putnam competition~\cite[Problem A4]{Putnam74}.
Lemma~\ref{lemma:Best} can be used
to calculate the mean of the diagonal terms that arise
when the probabilistic method is used to give lower bounds for
the Hadamard maximal determinant problem, as in~\cite{rpb253}
and our Lemma~\ref{lemma:variance}.

Lemma~\ref{lemma:double_sum} gives a closed-form expression for a
double sum which is analogous to the single sum of Lemma~\ref{lemma:Best}.
Lemma~\ref{lemma:double_sum} can be used to calculate the second
moments of the diagonal terms that arise when inequalities such as
Chebyshev's inequality are used 
to give lower bounds for the 
maximal determinant problem. 
In~\cite[Theorems 2--3]{rpb253} we gave lower bounds without using the
second moment, but these results can be improved (and the proofs simplified) 
by using estimates of the second moment.

For proofs of Lemmas~\ref{lemma:Best}--\ref{lemma:double_sum}
see~\cite{rpb255}. Generalisations are given in~\cite{BOOP}.

\begin{lemma}[Best \emph{et al}]	\label{lemma:Best}
For all $k \ge 0$,
\[
\sum_p \binom{2k}{k+p}\,|p| = k\binom{2k}{k}\,. 
\]
\end{lemma}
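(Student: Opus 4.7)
The plan is to reduce the symmetric sum to a one-sided sum and then apply the absorption identity $j\binom{2k}{j} = 2k\binom{2k-1}{j-1}$. First I would use the symmetry $\binom{2k}{k+p} = \binom{2k}{k-p}$ and the fact that $|p|$ is even in $p$ with the $p=0$ term contributing nothing, so the sum equals $2\sum_{p=1}^{k} p\binom{2k}{k+p}$. Substituting $j = k+p$ gives $2\sum_{j=k+1}^{2k}(j-k)\binom{2k}{j}$, which splits cleanly into two pieces: a weighted tail $2\sum_{j=k+1}^{2k} j\binom{2k}{j}$ and a plain tail $-2k\sum_{j=k+1}^{2k}\binom{2k}{j}$.

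For the weighted tail, absorption converts it to $4k\sum_{i=k}^{2k-1}\binom{2k-1}{i}$; since the row $\binom{2k-1}{\cdot}$ is symmetric about its midpoint and has no central element, this upper half equals exactly $\tfrac{1}{2}\cdot 2^{2k-1} = 2^{2k-2}$, giving $k\cdot 2^{2k}$. For the plain tail, symmetry of $\binom{2k}{j}$ about $j=k$ gives $\sum_{j=k+1}^{2k}\binom{2k}{j} = \tfrac{1}{2}\left(2^{2k} - \binom{2k}{k}\right)$, whose $-2k$ multiple cancels the $k\cdot 2^{2k}$ from the previous step and leaves $k\binom{2k}{k}$, as required.

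An equivalent and perhaps more memorable route is probabilistic: if $S_{2k}$ denotes the sum of $2k$ independent symmetric $\pm 1$ variables, then $\Prob[S_{2k} = 2p] = \binom{2k}{k+p}/2^{2k}$, and the identity is just the known formula $\E[|S_{2k}|] = k\binom{2k}{k}/2^{2k-1}$ (equivalently, the expected distance from the origin of a simple random walk after $2k$ steps), rescaled by $2^{2k}$. One can derive this by noting that $|S_{2k}| = S_{2k}^{+} + S_{2k}^{-}$, where $S^{\pm}$ are the positive and negative parts, and $\E[S_{2k}^{+}] = \E[S_{2k}^{-}]$ by symmetry, so it suffices to compute $\E[S_{2k}^{+}] = \sum_{p\ge 1} p\Prob[S_{2k}=2p]$ via the absorption identity above.

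There is no substantive obstacle: the identity is elementary, resting only on binomial symmetry and absorption. The only place a reader might hesitate is at the step $\sum_{i=k}^{2k-1}\binom{2k-1}{i} = 2^{2k-2}$, which I would justify by explicitly pointing out that $2k-1$ is odd, so the row splits into two equal halves at the bijection $i \leftrightarrow 2k-1-i$.
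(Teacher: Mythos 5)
Your argument is correct in every step: the reduction by symmetry to $2\sum_{p=1}^{k} p\binom{2k}{k+p}$, the absorption identity $j\binom{2k}{j} = 2k\binom{2k-1}{j-1}$, the evaluation $\sum_{i=k}^{2k-1}\binom{2k-1}{i} = 2^{2k-2}$ (valid because the row of odd index $2k-1$ has an even number of entries and splits evenly under $i \leftrightarrow 2k-1-i$), and the half-row identity $\sum_{j=k+1}^{2k}\binom{2k}{j} = \tfrac{1}{2}\bigl(2^{2k} - \binom{2k}{k}\bigr)$; the cancellation then leaves exactly $k\binom{2k}{k}$, with $k=0$ covered by the empty sum. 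However, you are not taking ``the same approach as the paper,'' because the paper gives no proof at all: it defers both Lemma~\ref{lemma:Best} and Lemma~\ref{lemma:double_sum} to the external note~\cite{rpb255}, with generalisations in~\cite{BOOP}. What your write-up buys is a short, self-contained proof of the single-sum identity from nothing but binomial symmetry and absorption, essentially the elementary toolkit behind the classical arguments attributed to Best~\cite{Best} and others. What the paper's citation buys is a uniform treatment that also covers the genuinely harder companion identity of Lemma~\ref{lemma:double_sum}, where the weight $|p^2 - q^2|$ couples the two summation indices and your splitting trick does not carry over directly. Your probabilistic restatement, $\E[\,|S_{2k}|\,] = k\binom{2k}{k}/2^{2k-1}$ for a $2k$-step symmetric random walk, is a worthwhile addition: it is precisely the form in which the lemma enters the paper's computations (see the proof of Lemma~\ref{lemma:variance}, where the diagonal entries $hf_{ii}$ are sums of variables distributed as $|S_1|$).
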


\begin{lemma}[Brent and Osborn]		\label{lemma:double_sum}
For all $k \ge 0$, 
\[
\sum_p \sum_q \binom{2k}{k+p} \binom{2k}{k+q}\, |p^2-q^2| =
 2k^2{\binom{2k}{k}}^2\,. 
\]
\end{lemma}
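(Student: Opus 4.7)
The plan is to interpret the double sum probabilistically and to reduce it, via a careful decomposition, to a standard binomial identity. Let $U$ and $V$ be independent, each distributed as a sum $X_1 + \cdots + X_{2k}$ of iid Rademacher variables, so that $\Prob[U = 2p] = \binom{2k}{k+p}/4^k$ and likewise for $V$. Lemma~\ref{lemma:Best} then gives $\E|U| = 2k\binom{2k}{k}/4^k$, and the claimed identity is equivalent to
\[ \E\bigl[\,|U^2 - V^2|\,\bigr] \;=\; 2\,(\E|U|)^2. \]

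To prove this, factor $|U^2 - V^2| = |U+V|\cdot|U-V|$. Let the underlying Rademacher sequences be $(X_i)$ and $(Y_i)$, and set $J := \{i : X_i = Y_i\}$. At each index exactly one of $X_i + Y_i$ and $X_i - Y_i$ is zero, so $U+V = 2\sum_{i \in J} X_i$ and $U-V = 2\sum_{i \notin J} X_i$; given $J$, these are independent sums of $|J|$ and $2k-|J|$ iid Rademacher variables. Writing $\mu_n := \E|Z_n|$ for a Rademacher walk $Z_n$ of length $n$ (computable in closed form from Lemma~\ref{lemma:Best}, separately for even and odd $n$) and using $|J| \sim \mathrm{Bin}(2k, 1/2)$, one obtains
\[ \E\bigl[\,|U^2 - V^2|\,\bigr] \;=\; \frac{4}{4^k}\sum_{m=0}^{2k}\binom{2k}{m}\mu_m\,\mu_{2k-m}, \]
so the identity reduces to the convolution
\[ \sum_{m=0}^{2k}\binom{2k}{m}\mu_m\,\mu_{2k-m} \;=\; 2^{2k-1}\mu_{2k}^{\,2}. \]

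To verify the convolution, substitute $\mu_{2j} = j\binom{2j}{j}/2^{2j-1}$ and $\mu_{2j+1} = (2j+1)\binom{2j}{j}/2^{2j}$, split the sum by the parity of $m$ (noting that $m$ and $2k-m$ share parity), and expand the factorials. Both $\binom{2k}{2j}\binom{2j}{j}\binom{2(k-j)}{k-j}$ and $\binom{2k}{2j+1}\binom{2j}{j}\binom{2(k-j-1)}{k-j-1}$ collapse to clean multiples of $\binom{2k}{k}\binom{k}{j}^2$; in the odd case the prefactor $(2j+1)(2k-2j-1)$ cancels exactly against the factorials. After a reindexing $j \mapsto j+1$ in the odd sum, the even contribution $j(k-j)\binom{k}{j}^2$ and odd contribution $j^2\binom{k}{j}^2$ combine through $j(k-j) + j^2 = jk$, and the whole identity collapses to
\[ \sum_{j=0}^{k} j\binom{k}{j}^2 \;=\; \tfrac{k}{2}\binom{2k}{k}, \]
which follows from $j\binom{k}{j} = k\binom{k-1}{j-1}$ and a single application of Vandermonde's convolution. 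The main obstacle is organizing the parity-split bookkeeping so that the scattered polynomial prefactors line up to expose the symmetric quantity $\binom{k}{j}^2$; once this alignment is noticed, the remainder is a classical identity.
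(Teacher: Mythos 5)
Your proof is correct, and it necessarily takes a different route from the paper, because the paper contains no internal proof of Lemma~\ref{lemma:double_sum} at all: it defers to the external note~\cite{rpb255}. Your argument has the merit of making the lemma self-contained modulo Lemma~\ref{lemma:Best}. I checked the key steps: the conditional independence of $U+V=2\sum_{i\in J}X_i$ and $U-V=2\sum_{i\notin J}X_i$ given $J$ does hold (the event $X_i=Y_i$ is independent of the value of $X_i$, so conditioning on $J$ leaves the $X_i$ iid Rademacher); the binomial products do collapse as claimed, namely $\binom{2k}{2j}\binom{2j}{j}\binom{2k-2j}{k-j}=\binom{2k}{k}\binom{k}{j}^2$ and $(2j+1)(2k-2j-1)\binom{2k}{2j+1}\binom{2j}{j}\binom{2k-2j-2}{k-j-1}=(k-j)^2\binom{2k}{k}\binom{k}{j}^2$; and the final reduction to $\sum_j j\binom{k}{j}^2=\tfrac{k}{2}\binom{2k}{k}$ closes the convolution identity, since $j(k-j)+j^2=jk$ and the target value is $\tfrac{k^2}{2}\binom{2k}{k}^2/2^{2k-2}$ after clearing powers of $2$. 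A pleasing feature is that your decomposition is exactly the inverse of the way the paper \emph{uses} the lemma in the proof of Lemma~\ref{lemma:variance}: there the authors start from independent half-sums and form their sum $S_1$ and difference $S_2$, so that $|S_1S_2|=4|p^2-q^2|$; you start from $|U^2-V^2|=|U+V|\,|U-V|$ with $U,V$ independent and recover conditionally independent walks via the random matching set $J$. Two small repairs to the write-up, neither affecting validity: (i) the odd-length mean $\mu_{2j+1}=(2j+1)\binom{2j}{j}/2^{2j}$ is not literally an instance of Lemma~\ref{lemma:Best}; you need the one-line observation that $\mu_{n+1}=\mu_n+\Prob[Z_n=0]$ for even $n$ (adding one step changes $|Z_n|$ in expectation only on the event $Z_n=0$), or a separate direct computation; (ii) the reindexing that turns the odd contribution $(k-j)^2\binom{k}{j}^2$ into $j^2\binom{k}{j}^2$ is $j\mapsto k-j$ together with the symmetry $\binom{k}{j}=\binom{k}{k-j}$, not $j\mapsto j+1$.
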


\section{The probabilistic construction}	\label{sec:construction}

We now describe the probabilistic construction that is common to
the proofs of
Theorems~\ref{thm:small_d}--\ref{thm:two_parameter_bound},
and prove some properties of the construction.
Our construction is a generalisation of Best's, which
is the case $d=1$.

Let $A$ be a Hadamard matrix of order $h\ge 4$.  
We add a border of $d$ rows
and columns to give a larger matrix $\widetilde{A}$ of order $n$. The 
border is defined by matrices $B$, $C$ and $D$ as in \S\ref{sec:Schur}.
The matrices
$A$, $B$, and $C$ have entries in $\{\pm1\}$.
We allow the matrix $D$ to have entries in $\{0, \pm 1\}$, but
the zero entries can be replaced by $+1$ or $-1$ without decreasing
$|\det(\widetilde{A})|$, so any lower bounds that we obtain on
$\max(|\det(\widetilde{A})|)$ are valid lower bounds on maximal 
determinants of $n\times n$ $\{\pm1\}$-matrices. To prove this, we
observe that, by Lemma~\ref{lemma:Schur}, $\det(\widetilde{A})
= \det(A)\det(D - CA^{-1}B)$ is a linear function of each element
$d_{ij}$ of $D$ (considered separately), so we can choose any ordering
of off-diagonal elements, then successively change
each off-diagonal element $d_{ij}$ of $D$ from $0$ to $+1$ or $-1$ in
such a way that $|\det(\widetilde{A})|$ does not decrease.

In the proofs of 
Theorems~\ref{thm:small_d}--\ref{thm:two_parameter_bound}
we show that our choice of $B$, $C$ and $D$ gives a Schur complement
$D - CA^{-1}B$ that, with positive probability, 
has sufficiently large determinant.
In the proof of Theorem~\ref{thm:small_d} it is sufficient to
consider $\E[\det(D - CA^{-1}B)]$; in the proofs of
Theorems~\ref{thm:lower_bd_via_Chebyshev}--\ref{thm:two_parameter_bound}
the argument is slightly more sophisticated and uses Chebyshev's inequality
or Cantelli's inequality.

\subsection{Details of the construction}	\label{subsec:details}
Let $A$ be any Hadamard matrix of order $h$.
$B$ is allowed to range over the set 
of all $h\times d$ $\{\pm 1\}$-matrices, chosen uniformly
and independently from the $2^{hd}$ possibilities.
The $d \times h$ matrix 
$C = (c_{ij})$ is a function of $B$. We choose
\[c_{ij} = \sign (A^TB)_{ji}\,,\]
where  
\[\sign(x) := \begin{cases} +1 \text{ if } x \ge 0,\\
			   -1 \text{ if } x < 0.\\
	     \end{cases}
\]
The definition of $\sign(0)$ here is arbitrary and does not
affect the results.
To complete the construction,
we choose $D = -I$. As mentioned above, it is inconsequential that
$D$ is not a $\{\pm1\}$-matrix.

\subsection{Properties of the construction}
				\label{subsec:properties}

Define $F = CA^{-1}B$
and $G = -(\widetilde{A}/A) = F-D = F+I$.
(The minus sign in the definition of $G$ is chosen
for convenience in what follows.)
Note that, since $A$ is a Hadamard matrix, $A^T = hA^{-1}$, so
$hF = CA^TB$. 

The definition of $C$ ensures that there is no cancellation in the inner
products defining the diagonal entries of $hF = C\cdot (A^T B)$. Thus, we
expect the diagonal entries $f_{ii}$ of $F$ to be nonnegative and 
of order $h^{1/2}$,
but the off-diagonal entries $f_{ij}$ ($i\ne j$) 
to be of order unity with high probability.
This intuition is justified by Lemma~\ref{lemma:sigma_asymptotics}.

The following lemma is (in the case $i=j$) due to M.~R.~Best~\cite{Best}
and independently J.~H.~Lindsey (see~\cite[footnote on 
pg.~88]{ES}). 
The upper bound can
be achieved infinitely often, in fact
whenever a regular Hadamard matrix of order $h$ exists.
For example, this is true if $h = 4q^2$, 
where $q$ is an odd prime power
and $q \not\equiv 7 \pmod 8$, see~\cite{XXS}.
\begin{lemma}			\label{lem:maxf}
If $F = (f_{ij})$ is chosen as above, then $|f_{ij}| \le h^{1/2}$
for $1 \le i, j \le d$.
\end{lemma}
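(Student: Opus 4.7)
The plan is to use Cauchy--Schwarz applied to the rows of $A^T B$. Since $A$ is Hadamard we have $A^{-1} = A^T/h$, so it suffices to show $|(CA^TB)_{ij}| \le h^{3/2}$. Writing $U := A^TB$ (a $h\times d$ integer matrix) and using the definition $c_{ik} = \sign(U_{ki})$, I would expand
\[
h\,f_{ij} = (CU)_{ij} = \sum_{k=1}^{h} \sign(U_{ki})\,U_{kj}.
\]

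The key observation is a norm bound on each column of $U$. Let $b_j$ denote the $j$-th column of $B$, so $U e_j = A^T b_j$. Since $A$ is Hadamard, $AA^T = hI$, and since $b_j \in \{\pm 1\}^h$, we obtain
\[
\sum_{k=1}^{h} U_{kj}^2 \;=\; \|A^Tb_j\|_2^2 \;=\; b_j^T(AA^T)b_j \;=\; h\,\|b_j\|_2^2 \;=\; h^2.
\]
Meanwhile $\sum_{k=1}^{h}\sign(U_{ki})^2 \le h$ trivially (with equality for any convention on $\sign(0)$ taking values in $\{\pm 1\}$).

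Applying Cauchy--Schwarz to the inner product in the display above then gives
\[
|h\,f_{ij}| \;\le\; \Bigl(\sum_k \sign(U_{ki})^2\Bigr)^{1/2}\Bigl(\sum_k U_{kj}^2\Bigr)^{1/2} \;\le\; h^{1/2}\cdot h \;=\; h^{3/2},
\]
so that $|f_{ij}| \le h^{1/2}$, as required. I do not expect any real obstacle: the argument is a one-line Cauchy--Schwarz once one recognises that the columns of $A^TB$ have Euclidean norm exactly $h$ thanks to the Hadamard property. The only subtlety worth remarking on is that for $i=j$ the inequality loses nothing in Cauchy--Schwarz when $\sign(U_{ki})U_{ki} = |U_{ki}|$, which is consistent with the equality case being attained by regular Hadamard matrices as noted in the surrounding text.
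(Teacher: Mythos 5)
Your proof is correct and is essentially the paper's own argument: the paper simply cites the Cauchy--Schwarz inequality ``as in Theorem~1 of Best,'' which is exactly your estimate $|h f_{ij}| = |c_i^T(A^Tb_j)| \le \|c_i\|_2\,\|A^Tb_j\|_2 = h^{1/2}\cdot h$, using the Hadamard property $AA^T = hI$ to compute the column norm. Nothing is missing; you have just written out in full the one-line Cauchy--Schwarz computation that the paper outsources to Best's paper.
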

\begin{proof}
This follows from the Cauchy-Schwarz inequality as in
Theorem~$1$ of Best~\cite{Best}. 
\end{proof}
\comment{Also,
J.-M. Goethals and J. J. Seidel [Canad. J. Math. 22 (1970), 597–614;
MR0282872 (44 \#106)] showed that ``if there is an Hadamard matrix of
order $n$ then there exists a regular symmetric Hadamard matrix with
constant diagonal of order $n^2$.''
However, the construction does not always give a maxdet matrix,
e.g. taking $h=36$ it gives the ``old record'' of $63×9^{17}×2^{36}$
and not the optimal value of $72×9^{17}×2^{36}$ which can be obtained
by the Orrick-Solomon $3$-normalised construction.
Similarly for $h=100$.
} 

\begin{lemma}		\label{lem:Efij}
If $F=(f_{ij})$ is chosen as above, then
\[
\E[f_{ij}] = \begin{cases} 2^{-h}h\binom{h}{h/2}\; \text{ if }\; i = j,\\
			     0 \;\text{ if }\; i \ne j.\\
             \end{cases}
\]
\end{lemma}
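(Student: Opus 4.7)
The plan is to compute $\E[(hF)_{ij}]$ directly from the definitions, using $hF = CA^TB$ (which holds because $A^T = hA^{-1}$ for a Hadamard $A$), and then divide by $h$. Write $M = A^TB$ with entries $M_{ki} = a_k^T b_i$, where $a_k$ is the $k$th column of $A$ and $b_i$ is the $i$th column of $B$. The definition $c_{ij} = \sign(M_{ji})$ gives
\[
(hF)_{ij} = \sum_{k=1}^{h} c_{ik} M_{kj} = \sum_{k=1}^{h} \sign(a_k^T b_i)\,(a_k^T b_j).
\]

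For the diagonal case $i=j$, this collapses to $(hF)_{ii} = \sum_{k} |a_k^T b_i|$. Since the entries of $b_i$ are iid uniform on $\{\pm 1\}$ and the entries of $a_k$ are fixed signs, the component-wise product $a_k \odot b_i$ again has iid $\pm 1$ entries, so $a_k^T b_i$ has the same distribution as $S_h := X_1 + \dots + X_h$ with $X_\ell$ iid uniform on $\{\pm 1\}$. For $h = 2k$ even (which holds whenever a Hadamard matrix of order $h \ge 4$ exists), parameterising $S_h = 2m$ gives $\Prob[S_h = 2m] = 2^{-h}\binom{2k}{k+m}$, whence
\[
\E[|S_h|] = 2^{1-h}\sum_{m} |m|\binom{2k}{k+m} = 2^{1-h}\cdot k\binom{2k}{k} = 2^{-h} h\binom{h}{h/2}
\]
by Lemma~\ref{lemma:Best}. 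Summing over $k=1,\dots,h$ and dividing by $h$ yields $\E[f_{ii}] = 2^{-h} h\binom{h}{h/2}$, as claimed.

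For the off-diagonal case $i \neq j$, the columns $b_i$ and $b_j$ are independent, and $\sign(a_k^T b_i)$ is a function of $b_i$ only, while $a_k^T b_j$ is a function of $b_j$ only. Hence
\[
\E\!\left[\sign(a_k^T b_i)\,(a_k^T b_j)\right] = \E\!\left[\sign(a_k^T b_i)\right]\,\E\!\left[a_k^T b_j\right] = 0,
\]
since $\E[b_j] = 0$ entrywise. Summing over $k$ gives $\E[(hF)_{ij}] = 0$, hence $\E[f_{ij}] = 0$.

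The steps are essentially mechanical; the only substantive ingredient is Lemma~\ref{lemma:Best}, and the only place one must be careful is the independence argument in the off-diagonal case — specifically, noting that the $\sign$ factor depends only on $b_i$ (not on $b_j$), which is exactly what the construction of $C$ from $B$ ensures. There is no real obstacle here.
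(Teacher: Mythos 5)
Your proof is correct and is essentially the paper's own argument: the paper disposes of the diagonal case by citing Best's Theorem~3, and the computation behind that citation is exactly your reduction of $\E\bigl[\,|a_k^T b_i|\,\bigr]$ to $\E[\,|S_h|\,]$ followed by Lemma~\ref{lemma:Best}, while the off-diagonal case is the same independence/zero-mean observation the paper dismisses as ``easy, since $B$ is chosen randomly.'' Your write-up is simply a self-contained version of the same argument.
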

\begin{proof}
The case $i=j$ follows from Best~\cite[Theorem~3]{Best}.
The case $i \ne j$ is easy, since $B$ is chosen randomly.
\end{proof}

\begin{lemma}	\label{lemma:u_sum}
Let $A \in \{\pm1\}^{h\times h}$ be a Hadamard matrix,
$C \in \{\pm1\}^{d\times h}$, and $U = CA^{-1}$.  
Then, for each $i$ with $1 \le i \le d$,
\[\sum_{j=1}^h u_{ij}^2 = 1.\]
\end{lemma}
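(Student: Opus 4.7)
The plan is to interpret the sum $\sum_{j=1}^h u_{ij}^2$ as a diagonal entry of the matrix $UU^T$, and then use the defining property of a Hadamard matrix to collapse $UU^T$ to a constant multiple of $CC^T$. Since $C$ has $\pm1$ entries, its Gram matrix has known diagonal entries, and the conclusion drops out.

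Concretely, first I would write $U = CA^{-1}$, so $U^T = (A^{-1})^T C^T$, giving
\[
UU^T \;=\; C A^{-1} (A^{-1})^T C^T \;=\; C (A^T A)^{-1} C^T.
\]
Next I would invoke the fact that $A$ is Hadamard, so $A^T A = h I$, whence $(A^T A)^{-1} = h^{-1} I$ and
\[
UU^T \;=\; \tfrac{1}{h}\, C C^T.
\]
Finally, the $(i,i)$ entry of $CC^T$ equals $\sum_{j=1}^{h} c_{ij}^2 = h$, since every $c_{ij} \in \{\pm 1\}$. Therefore
\[
\sum_{j=1}^{h} u_{ij}^2 \;=\; (UU^T)_{ii} \;=\; \tfrac{1}{h}\cdot h \;=\; 1.
\]

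There is essentially no obstacle here: the argument is a one-line matrix identity, and the only thing to be careful about is not confusing $A^{-1}(A^{-1})^T$ with $(A A^T)^{-1}$. Since $A$ is Hadamard, both $AA^T$ and $A^T A$ equal $hI$, so either form works, but writing $(A^T A)^{-1}$ makes the substitution most transparent. The lemma does not actually require $C$ to consist of $\pm 1$ entries in a sophisticated way — only that each row of $C$ has squared Euclidean norm exactly $h$ — so the proof is robust to minor variations in the construction.
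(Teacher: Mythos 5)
Your proof is correct and follows essentially the same route as the paper's: both compute $UU^T = h^{-1}CC^T$ using $A^TA = hI$ and read off the diagonal entries of $CC^T$ as $h$. You simply spell out the intermediate identity $A^{-1}(A^{-1})^T = (A^TA)^{-1}$ that the paper leaves implicit.
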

\begin{proof}
Since $A$ is Hadamard, $A^TA = hI$. Thus
$UU^T = 
h^{-1}CC^T$.
Since $c_{ij} = \pm 1$, ${\rm diag}(CC^T) = hI$.
Thus ${\rm diag}(UU^T) = I$.
\end{proof}

\begin{lemma}		\label{lem:fij2}
If $F = (f_{ij})$ is chosen as above, then
\begin{equation} 	\label{eq:Efij2}
\E[f_{ij}^2] = 1 \text{ for } \; i \ne j.
\end{equation}
\end{lemma}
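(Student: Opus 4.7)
The plan is to exploit the column-wise independence structure of the random matrix $B$ together with the identity provided by Lemma~\ref{lemma:u_sum}. Write $U = CA^{-1}$, so $F = UB$ and
\[
f_{ij} = \sum_{k=1}^h u_{ik}\, b_{kj}.
\]
The key structural observation is that the $i$th row of $C$, determined by $c_{ij}=\sign((A^TB)_{ji})$, depends on $B$ only through the $i$th column of $B$. Hence the $i$th row of $U$ also depends on $B$ only through the $i$th column of $B$. When $i\neq j$, this means the coefficient vector $(u_{i1},\ldots,u_{ih})$ is a function of column $i$ of $B$, while $(b_{1j},\ldots,b_{hj})$ is column $j$ of $B$; by the independence of the columns of $B$, these two vectors are independent.

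I would then condition on the $i$th row of $U$, using the independence of the entries $b_{kj}$ (and the fact that they are $\pm1$ with mean zero, so $\E[b_{kj}b_{lj}]=\delta_{kl}$), to get
\[
\E\bigl[f_{ij}^2 \bigm| u_{i,\cdot}\bigr]
 = \sum_{k,l} u_{ik} u_{il}\,\E[b_{kj}b_{lj}]
 = \sum_{k=1}^h u_{ik}^2.
\]
Lemma~\ref{lemma:u_sum} asserts that this last sum equals $1$ (with probability one, regardless of the realised value of $u_{i,\cdot}$), so taking expectations yields $\E[f_{ij}^2]=1$.

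There is no real obstacle in this argument; the only subtle point is verifying the independence claim, namely checking that the definition $c_{ij}=\sign((A^TB)_{ji})$ genuinely makes row $i$ of $C$ a function of column $i$ of $B$ alone. This is immediate from the index bookkeeping once one notes that $(A^TB)_{ji}=\sum_k a_{kj}b_{ki}$, which involves only the entries $b_{\cdot,i}$.
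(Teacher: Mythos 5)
Your proposal is correct and follows essentially the same route as the paper's own proof: both write $F=UB$, observe that row $i$ of $U$ depends only on column $i$ of $B$, condition on that column (equivalently on $u_{i,\cdot}$), note that the cross terms $\E[b_{kj}b_{\ell j}]$ vanish for $k\ne\ell$, and invoke Lemma~\ref{lemma:u_sum} to conclude $\sum_k u_{ik}^2=1$. The only cosmetic difference is that the paper normalises to $i=1$ and fixes the first column of $B$ explicitly, whereas you phrase the same step as conditioning on $u_{i,\cdot}$.
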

\begin{proof} 
We can  assume, without essential loss of generality, that 
$i=1$, $j > 1$.  
Write $F = UB$, where $U = CA^{-1} = h^{-1}CA^T$.
Now 
\begin{equation}		\label{eq:sum_indep_vars}
f_{1j} = \sum_k u_{1k}b_{kj},
\end{equation}
where
\[u_{1k} = \frac{1}{h}\sum_{\ell} c_{1\ell}a_{k\ell}\]
and
\[c_{1\ell} = {\rm sgn}\left(\sum_m b_{m1}a_{m\ell}\right).\]
Observe that $c_{1\ell}$ and $u_{1k}$ depend only on the first column of $B$.
Thus, $f_{1j}$ depends only on the first and $j$-th columns of $B$.
If we fix the first column of $B$ and take expectations over all choices
of the other columns, we obtain
\[\E[f_{1j}^2] = \E\left[\sum_k\sum_{\ell}u_{1k}u_{1\ell}b_{kj}b_{\ell j}
  \right].\]
The expectation of the terms with $k\ne \ell$ vanishes,
and the expectation of the terms with $k=\ell$
is $\sum_k u_{1k}^2$.
Thus, \eqref{eq:Efij2} follows from Lemma~\ref{lemma:u_sum}.
\end{proof}

\begin{lemma}		\label{lem:independence}
Let $F = CA^{-1}B$ be chosen as above. Then
$f_{ij}$ and $f_{k\ell}$ are independent if and only if
$\{i,j\}\cap\{k,\ell\} = \emptyset$.
\end{lemma}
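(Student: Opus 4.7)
The plan is to identify which columns of $B$ each variable $f_{ij}$ depends on, and then invoke mutual independence of those columns. From the definition of $C$ in the construction, $c_{i\ell} = \sign((A^TB)_{\ell i}) = \sign\!\left(\sum_m a_{m\ell}\,b_{mi}\right)$, so row $i$ of $C$ is a deterministic function of column $i$ of $B$ alone. Right-multiplying by the constant matrix $A^{-1}$ preserves this property, so row $i$ of $U := CA^{-1}$ also depends only on column $i$ of $B$. Since $f_{ij} = (UB)_{ij} = \sum_k u_{ik}\,b_{kj}$, the random variable $f_{ij}$ is a measurable function of at most columns $i$ and $j$ of $B$ (and of column $i$ alone when $i=j$).

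The forward direction is then immediate: the columns of $B$ are mutually independent random vectors in $\{\pm 1\}^h$ because the entries of $B$ are i.i.d.\ random signs, so if $\{i,j\}\cap\{k,\ell\}=\emptyset$ then $f_{ij}$ and $f_{k\ell}$ are functions of disjoint subsets of an independent family, hence independent.

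For the converse I would proceed by a short case analysis on the overlap of $\{i,j\}$ and $\{k,\ell\}$. If the two index sets are equal, the variables coincide, and otherwise they share exactly one index; in any such case, conditioning on the shared column of $B$ leaves both $f_{ij}$ and $f_{k\ell}$ depending on a common nonconstant function of that column, either through a shared row of $U$ or through a shared column of $B$ itself, which forces nonvanishing joint higher-order moments. The main obstacle is precisely this converse: the symmetry of the Rademacher distribution makes the ordinary covariance $\mathrm{Cov}(f_{ij},f_{k\ell})$ vanish, so to exhibit dependence one must instead work with a functional such as $\E[f_{ij}^2 f_{k\ell}^2]$ and show it fails to factor through $\E[f_{ij}^2]\,\E[f_{k\ell}^2]$, exploiting the structure of $U$ given by Lemma~\ref{lemma:u_sum}.
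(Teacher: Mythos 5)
Your proof of the ``if'' direction is correct and is essentially identical to the paper's: the paper's one-sentence proof rests on exactly the observation you spell out, namely that row $i$ of $U=CA^{-1}$ is a function of column $i$ of $B$ alone, hence $f_{ij}=\sum_k u_{ik}b_{kj}$ is a function of columns $i$ and $j$ of $B$ only, and functions of disjoint subfamilies of the independent columns of $B$ are independent. Up to that point you are on solid ground, and in fact the paper's own proof says no more than this.

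The ``only if'' direction is where your proposal has a genuine gap. First, a small but real error: if $\{i,j\}=\{k,\ell\}$ with $i\ne j$ the variables need \emph{not} coincide; $f_{12}$ and $f_{21}$ have the same index set but are distinct random variables (this is precisely the pair the paper highlights after the lemma). Second, and more seriously, your key deduction --- that sharing a column ``forces nonvanishing joint higher-order moments'' --- is not a valid inference (two variables can each be functions of a common source and still be independent, e.g.\ the two bits of a uniform element of $\{0,1,2,3\}$), and the specific functional you propose provably fails in the shared-row case. Take $i=k$, $j\ne\ell$. Conditional on column $i$ of $B$, the variables $f_{ij}$ and $f_{i\ell}$ are independent, and by Lemma~\ref{lemma:u_sum} each has conditional second moment $\E[f_{ij}^2\mid b_i]=\sum_k u_{ik}^2=1$, a constant; hence $\E[f_{ij}^2 f_{i\ell}^2]=\E\bigl[\E[f_{ij}^2\mid b_i]\,\E[f_{i\ell}^2\mid b_i]\bigr]=1=\E[f_{ij}^2]\,\E[f_{i\ell}^2]$ exactly, so the squares factor even though the variables are dependent. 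Ironically, Lemma~\ref{lemma:u_sum}, which you invoke as the engine of the argument, is exactly what kills it. To detect the dependence here one must go at least to fourth moments: $\E[f_{ij}^4\mid b_i]=3-2\sum_k u_{ik}^4$, and the real content of the converse is the non-degeneracy statement that $\sum_k u_{ik}^4$ is not almost surely constant (for the standard Hadamard matrix of order $4$ it takes both values $1$ and $1/4$ with positive probability), which then gives $\E[f_{ij}^4 f_{i\ell}^4]>\E[f_{ij}^4]\,\E[f_{i\ell}^4]$. In fairness, the paper's terse proof substantiates only the ``if'' half, which is the half used in its later arguments; but as a proof of the stated equivalence, your converse is both incomplete and, as sketched, aimed at a computation that cannot succeed.
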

\begin{proof}
This follows from the fact that
$f_{ij}$ depends on columns $i$ and $j$ (and no other columns) of
$B$.
\end{proof}
Suppose that $i \ne j$, $k \ne \ell$. 
We cannot 
assume that $f_{ij}$
and $f_{k\ell}$ are independent. For example,
by Lemma~\ref{lem:independence},
$f_{12}$ and $f_{21}$ are
not independent. 
The following lemma bounds $\E[f_{ij}f_{k\ell}]$
without assuming independence.
\begin{lemma}		\label{lem:fijfkl}
Suppose that $i \ne j$, $k \ne \ell$. Then
\begin{equation}	\label{eq:E_fijfkl}
|\E[f_{ij}f_{k\ell}]| \le \E[|f_{ij}f_{k\ell}|] \le 1.
\end{equation}
\end{lemma}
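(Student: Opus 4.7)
The plan is to observe that both inequalities are essentially free from standard expectation estimates, once we invoke Lemma~\ref{lem:fij2}. The first inequality $|\E[f_{ij}f_{k\ell}]| \le \E[|f_{ij}f_{k\ell}|]$ is just the triangle (Jensen) inequality for expectation applied to the random variable $f_{ij}f_{k\ell}$, so there is nothing to do there.

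For the second inequality, I would apply the Cauchy--Schwarz inequality in $L^2$ of the underlying probability space, which gives
\[
\E[|f_{ij}f_{k\ell}|] \le \sqrt{\E[f_{ij}^2]\,\E[f_{k\ell}^2]}.
\]
Since $i\ne j$ and $k\ne \ell$, Lemma~\ref{lem:fij2} tells us that $\E[f_{ij}^2] = \E[f_{k\ell}^2] = 1$, so the right-hand side is exactly $1$, proving~\eqref{eq:E_fijfkl}.

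There is no real obstacle here: the whole content of the lemma is packaged inside Lemma~\ref{lem:fij2}, which already does the work of showing that each off-diagonal entry of $F$ has unit second moment regardless of the possible dependence between different off-diagonal entries. The point of stating Lemma~\ref{lem:fijfkl} separately is precisely that we cannot use independence (cf.\ Lemma~\ref{lem:independence}, which fails for pairs like $(f_{12},f_{21})$), and Cauchy--Schwarz is the natural substitute that bypasses independence altogether.
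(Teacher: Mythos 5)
Your proof is correct and is essentially identical to the paper's: the first inequality is immediate (Jensen/triangle inequality), and the second follows from Cauchy--Schwarz combined with Lemma~\ref{lem:fij2}, giving $\E[|f_{ij}f_{k\ell}|] \le \sqrt{\E[f_{ij}^2]\,\E[f_{k\ell}^2]} = 1$. Your closing remark about Cauchy--Schwarz bypassing the lack of independence accurately captures why the lemma is stated this way.
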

\begin{proof}
The first inequality in~\eqref{eq:E_fijfkl} is immediate.
The second inequality follows from
the Cauchy-Schwarz inequality and Lemma~\ref{lem:fij2}, using
\[
\E[|f_{ij}f_{k\ell}|] \le \sqrt{\E[f_{ij}^2]\E[f_{k\ell}^2]} = 1.
\]
\end{proof}
\begin{lemma}		\label{lem:G_diag}
Let $G = F+I$ be chosen as above. Then
\[
\E\left[\prod_{i=1}^d g_{ii}\right] =
 \left[1 + 2^{-h}h\binom{h}{h/2}\right]^d.
\] 
\end{lemma}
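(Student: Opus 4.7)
The plan is very short, because all the hard work has already been done in the preceding lemmas. Writing $G = F + I$, the diagonal entries are $g_{ii} = 1 + f_{ii}$, so
\[
\prod_{i=1}^d g_{ii} \;=\; \prod_{i=1}^d (1+f_{ii}).
\]
My first step will be to argue that the random variables $f_{11},f_{22},\ldots,f_{dd}$ are mutually independent. This follows from Lemma~\ref{lem:independence}, or more directly from the construction in~\S\ref{subsec:details}: the entry $f_{ii}$ is a function of the $i$-th column of $B$ alone (via $c_{i\ell} = \sign((A^TB)_{\ell i})$, which depends only on column $i$, and hence $u_{ik}$ depends only on column $i$, so $f_{ii} = \sum_k u_{ik}b_{ki}$ depends only on column $i$ of $B$), and the columns of $B$ are independent by assumption. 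Strictly speaking, Lemma~\ref{lem:independence} only gives pairwise independence for the disjoint-index entries, but the same argument instantly upgrades to mutual independence of $f_{11},\ldots,f_{dd}$ since the sets $\{i\}$ are pairwise disjoint.

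The second step is then to apply this independence together with Lemma~\ref{lem:Efij}:
\[
\E\Bigl[\prod_{i=1}^d g_{ii}\Bigr]
 = \prod_{i=1}^d \E[1+f_{ii}]
 = \prod_{i=1}^d \bigl(1 + \E[f_{ii}]\bigr)
 = \Bigl(1 + 2^{-h}h\binom{h}{h/2}\Bigr)^{d},
\]
which is the stated identity.

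There is essentially no obstacle. The only point that requires any care is to make explicit the mutual independence of the diagonal entries $f_{ii}$ (as opposed to merely pairwise independence, which is what Lemma~\ref{lem:independence} states for pairs $(f_{ij},f_{k\ell})$ with disjoint index sets). This is immediate from the construction, but worth a sentence of justification so that the factorisation of the expectation is rigorously justified.
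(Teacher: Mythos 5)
Your proof is correct and is essentially the paper's own argument: independence of the diagonal entries $f_{ii}$ (hence of $g_{ii}=f_{ii}+1$) plus Lemma~\ref{lem:Efij} gives the factorisation of the expectation. In fact you are slightly more careful than the paper, which simply cites Lemma~\ref{lem:independence} for ``independence'' of the $f_{ii}$ without remarking that the lemma as stated is pairwise; your observation that $f_{ii}$ depends only on column $i$ of $B$ (the very fact underlying Lemma~\ref{lem:independence}) supplies the mutual independence needed for the product formula.
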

\begin{proof}
By Lemma~\ref{lem:independence}, 
the diagonal terms $f_{ii}$ are independent;
hence the \hbox{diagonal} terms $g_{ii} = f_{ii}+1$ are independent.
Now $\E[g_{ii}] = \E[f_{ii}] +1$, so from 
Lemma~\ref{lem:Efij} we have
\[
\E\left[\prod_{i=1}^d g_{ii}\right] = \prod_{i=1}^d \E[g_{ii}] =
  \left[1 + 2^{-h}h\binom{h}{h/2}\right]^d.
\]
\end{proof}

\pagebreak[3]

\subsection{Mean and variance of elements of $G$}	\label{subsec:variance}

Using 
Lemma~\ref{lemma:double_sum}, we can
complete the computation of the mean and variance of the elements of the
matrix $G$.

\begin{lemma}	\label{lemma:variance}
Let $A$ be a Hadamard matrix of order $h \ge 4$ and $B$, $C$ be
$\{\pm1\}$-matrices chosen as above.  Let 
$G = CA^{-1}B + I$.
Then 
\begin{eqnarray}
\E[g_{ii}] &=& 1 + \frac{h}{2^h}\binom{h}{h/2}\,,\label{eq:mu1}\\
\E[g_{ij}] &=& 0 \text{ for } 1 \le i, j \le d,\; i \ne j,\label{eq:mu2}\\
\V[g_{ii}] &=&
  1 + \frac{h(h-1)}{2^{h+1}}\binom{h/2}{h/4}^2 
   - \; \frac{h^2}{2^{2h}}\binom{h}{h/2}^2,\label{eq:sigma2}\\
\V[g_{ij}] &=& 1 \text{ for } 1 \le i, j \le d,\; i \ne j.\label{eq:sigmaij}
\end{eqnarray}
\end{lemma}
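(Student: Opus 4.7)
The plan is to exploit the decomposition $G = F + I$, so that $g_{ii} = f_{ii} + 1$ and $g_{ij} = f_{ij}$ for $i \ne j$. Under this splitting, equations~\eqref{eq:mu1} and~\eqref{eq:mu2} follow immediately from Lemma~\ref{lem:Efij}, and equation~\eqref{eq:sigmaij} follows from Lemma~\ref{lem:fij2} together with $\E[f_{ij}] = 0$, since $\V[g_{ij}] = \V[f_{ij}] = \E[f_{ij}^2] - \E[f_{ij}]^2 = 1$. The only real work is~\eqref{eq:sigma2}, for which I must compute $\E[f_{ii}^2]$ (then $\V[g_{ii}] = \V[f_{ii}] = \E[f_{ii}^2] - \E[f_{ii}]^2$, and $\E[f_{ii}]$ is already given by Lemma~\ref{lem:Efij}).

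For the second moment, I would start by noting that the choice $c_{i\ell} = \sign((A^TB)_{\ell i})$ makes $h f_{ii} = (CA^TB)_{ii}$ telescope into a sum of absolute values: setting $X_\ell = (A^T B)_{\ell i} = \sum_m a_{m\ell} b_{mi}$, one has $h f_{ii} = \sum_\ell |X_\ell|$. Expanding the square,
\[
\E[(hf_{ii})^2] \;=\; \sum_\ell \E[X_\ell^2] \;+\; \sum_{\ell \ne \ell'} \E\bigl[|X_\ell|\,|X_{\ell'}|\bigr].
\]
Each $X_\ell$ is a sum of $h$ independent Rademacher variables, so $\E[X_\ell^2] = h$, contributing $h^2$ in total. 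The main obstacle is handling the $h(h-1)$ cross terms, since $X_\ell$ and $X_{\ell'}$ are not independent (both depend on column $i$ of $B$).

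The key trick I would use is a change of variables that exploits the Hadamard orthogonality $\sum_m a_{m\ell} a_{m\ell'} = 0$ for $\ell \ne \ell'$. Fixing $\ell \ne \ell'$ and setting $z_m = a_{m\ell} b_{mi}$ (still independent Rademacher), the identity $b_{mi} = a_{m\ell} z_m$ gives $X_\ell = \sum_m z_m$ and $X_{\ell'} = \sum_m a_{m\ell}a_{m\ell'} z_m$. Orthogonality forces the signs $a_{m\ell}a_{m\ell'}$ to split the index set into two halves of size $h/2$; writing $P$ and $Q$ for the corresponding partial sums, one gets $X_\ell = P+Q$ and $X_{\ell'} = P-Q$ with $P, Q$ independent sums of $h/2$ Rademachers. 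Hence $|X_\ell||X_{\ell'}| = |P^2 - Q^2|$. Parametrising $P = 2p$, $Q = 2q$ in terms of deviation from the mean, the expectation becomes a double binomial sum to which Lemma~\ref{lemma:double_sum} (with parameter $k = h/4$) applies, yielding
\[
\E\bigl[|X_\ell|\,|X_{\ell'}|\bigr] \;=\; \frac{h^2}{2^{h+1}}\binom{h/2}{h/4}^{\!2}.
\]

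Combining the diagonal and off-diagonal contributions and dividing by $h^2$, I obtain $\E[f_{ii}^2] = 1 + \frac{h(h-1)}{2^{h+1}}\binom{h/2}{h/4}^{\!2}$. Subtracting $\E[f_{ii}]^2 = \frac{h^2}{2^{2h}}\binom{h}{h/2}^{\!2}$, which comes from Lemma~\ref{lem:Efij}, yields~\eqref{eq:sigma2}. The main obstacle throughout is the covariance computation, and that is precisely where the Hadamard orthogonality plus Lemma~\ref{lemma:double_sum} pay off; without the $|P^2 - Q^2|$ reduction the sum would be intractable in closed form.
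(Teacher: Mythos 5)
Your proposal is correct and follows essentially the same route as the paper: reduce everything to $\V[f_{ii}]$ via $G = F+I$, write $hf_{ii} = \sum_\ell |X_\ell|$, handle the cross terms by the $(P+Q,\,P-Q)$ decomposition coming from Hadamard column orthogonality, and evaluate $\E[|P^2-Q^2|]$ with Lemma~\ref{lemma:double_sum} at parameter $k = h/4$. In fact your explicit change of variables $z_m = a_{m\ell}b_{mi}$ spells out the step that the paper only asserts distributionally (that each product $Y_jY_\ell$ has the same distribution as $|S_1S_2|$), so your write-up is, if anything, slightly more detailed at that point.
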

\begin{proof}
Since $G = F+I$,
the results~\eqref{eq:mu1}, \eqref{eq:mu2} and \eqref{eq:sigmaij}
follow from
Lemma~\ref{lem:Efij} and Lemma~\ref{lem:fij2} above.
Thus, we only need to prove~\eqref{eq:sigma2}.
Since $g_{ii} = f_{ii}+1$, 
it is sufficient to compute $\V[f_{ii}]$.

Now $hF = CA^{T}B$ (since $A$ is a Hadamard matrix).  We compute the
second moment (about the origin) of the diagonal elements $hf_{ii}$
of $hF$.  Since $h$ is a Hadamard order and $h \ge 4$, we can write
$h = 4k$ where $k \in \Z$.  
Consider $h$ independent random variables $X_j \in \{\pm1\}$, $1 \le j \le h$,
where $X_j = +1$ with probability $1/2$.  Define random variables $S_1$,
$S_2$ by
\[S_1 = \sum_{j=1}^{4k} X_j\]
and
\[S_2 = \sum_{j=1}^{2k} X_j \; - \sum_{j=2k+1}^{4k} X_j\,.\]

Consider a particular choice of $X_1, \ldots, X_h$ and suppose that
$k+p$ of $X_1, \ldots, X_{2k}$ are $+1$,
and that $k+q$ of $X_{2k+1},\ldots,X_{4k}$ are $+1$.
Then we have $S_1 = 2(p+q)$ and $S_2 = 2(p-q)$.
Thus, taking expectations over all $2^{4k}$ possible (equally likely) choices
and using Lemma~\ref{lemma:double_sum}, we see that
\begin{eqnarray*}
\E[|S_1 S_2|] &=& 4\E[|p^2-q^2|]\\
 &=& \frac{4}{2^{4k}}\sum_p \sum_q \binom{2k}{k+p}\binom{2k}{k+q}|p^2-q^2|\\
 &=& \frac{4}{2^{4k}} \cdot 2k^2\binom{2k}{k}^2\\
 &=& \frac{h^2}{2^{h+1}}\binom{2k}{k}^2\,.\\
\end{eqnarray*}
By the definitions of $B$, $C$ and $F$, we see that
$hf_{ii}$ is a sum of the form $Y_1 + Y_2 + \cdots + Y_h$,
where each $Y_j$ is a random variable with the same distribution as $|S_1|$,
and each product $Y_j Y_\ell$ (for $j \ne \ell$) has the 
same distribution as $|S_1 S_2|$.
Also, $Y_j^2$ has the
same distribution as $|S_1|^2 = S_1^2$.  
The random variables $Y_j$ are not independent,
but by linearity of expectations we obtain
\[h^2\E[f_{ii}^2] = h\E[S_1^2] + h(h-1)\E[|S_1S_2|]
 = h^2 + h(h-1)\cdot \frac{h^2}{2^{h+1}}\binom{2k}{k}^2\,.\]
This gives
\[\E[f_{ii}^2] = 1 + \frac{h(h-1)}{2^{h+1}}\binom{2k}{k}^2\,.\]
The result for $\V[g_{ii}]$ now follows from
\[\V[g_{ii}] = \V[f_{ii}] = \E[f_{ii}^2] - \E[f_{ii}]^2\,.\]
\end{proof}

For convenience we write $\mu = \mu(h) := \E[g_{ii}]$ 
and $\sigma^2 = \sigma(h)^2 := \V[g_{ii}]$.
If $h$ is understood from the context we may write simply $\mu$
and $\sigma^2$ respectively.

We now give some asymptotic approximations to $\mu(h)$ and $\sigma(h)^2$ 
that are accurate for large~$h$.  We also show that $\mu(h)$ is monotonic
increasing and of order $h^{1/2}$, but $\sigma(h)$ is bounded and monotonic
decreasing.
\pagebreak[4]

\begin{lemma}	\label{lemma:sigma_asymptotics}
For $h \in 4\Z$, $h \ge 4$, 
$\mu(h)$ is monotonic increasing, and $\sigma(h)^2$ is monotonic decreasing.
Moreover, the following inequalities hold:
\begin{equation}	\label{eq:ineq_mu1}
\sqrt{\frac{2h}{\pi}} + 0.9 < \mu(h) < \sqrt{\frac{2h}{\pi}} + 1,
\end{equation}
and
\begin{equation}	\label{eq:ineq_mu2}
\mu(h) = 1 + \sqrt{\frac{2h}{\pi}}\left(1 - \frac{1}{4h} + 
		\frac{\alpha(h)}{h^2}\right)\raisecomma
\end{equation}
where 
\begin{equation}	\label{eq:alpha_bd}
0 \le \alpha(h) \le (4\sqrt{\pi} - 7)/2 < 0.04491.
\end{equation}
Also,
\begin{equation}	\label{eq:ineq_sigma1}
0.04507 \approx 1 - {3}/{\pi} = \lim_{h \to \infty} \sigma(h)^2
	< \sigma(h)^2 \le \sigma(4)^2 = 0.25,
\end{equation}
and
\begin{equation}	\label{eq:ineq_sigma2}
\sigma(h)^2 = \left(1 - \frac{3}{\pi}\right) + 
	\frac{11}{4\pi h} - \frac{\beta(h)}{h^2}\,\raisecomma
\end{equation}
where
\begin{equation}	\label{eq:beta_bd}
0 \le \beta(h) \le 12 - 37/\pi < 0.23.
\end{equation}
\end{lemma}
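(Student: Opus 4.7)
The plan is to use the exact formulas in Lemma~\ref{lemma:variance}, namely
\[
\mu(h)-1 = \frac{h}{2^h}\binom{h}{h/2}, \qquad
\sigma(h)^2 = 1 + \frac{h(h-1)}{2^{h+1}}\binom{h/2}{h/4}^2 - (\mu(h)-1)^2,
\]
and apply a sharp form of Stirling's formula to the central binomial coefficients. The main tool is the classical expansion
\[
\binom{2m}{m} = \frac{4^m}{\sqrt{\pi m}}\left(1 - \frac{1}{8m} + \frac{1}{128 m^2} + O(m^{-3})\right),
\]
together with Robbins-style two-sided bounds on $n!$ for explicit numerical constants. Everywhere we use $h=4k$.

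First I would handle $\mu$. Substituting $m=h/2$ into the Stirling expansion immediately gives $(\mu(h)-1)=\sqrt{2h/\pi}\bigl(1 - 1/(4h) + \alpha(h)/h^2\bigr)$, which is~\eqref{eq:ineq_mu2}; the sign of $\alpha(h)$ and the constant $(4\sqrt{\pi}-7)/2$ in~\eqref{eq:alpha_bd} are pinned down by retaining the next Stirling term and using Robbins' inequalities to control the tail. The two-sided bound~\eqref{eq:ineq_mu1} follows by the same estimate, with the small additive constants ($0.9$ and $1$) obtained by checking the boundary case $h=4$ (where $\mu(4)=5/2$) against $\sqrt{2h/\pi}$ and then using the monotonicity of the error in $h$. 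Monotonicity of $\mu$ on $4\Z$ is immediate from the clean ratio
\[
\frac{\mu(h+4)-1}{\mu(h)-1} = \frac{(h+1)(h+3)}{h(h+2)},
\]
which I would derive by writing both binomial coefficients as products of factorials and simplifying; the right-hand side exceeds $1$ by $(2h+3)/(h(h+2))>0$.

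Next I would handle $\sigma^2$. Expanding the two positive contributions separately with the Stirling series, I get $\frac{h(h-1)}{2^{h+1}}\binom{h/2}{h/4}^2 = \frac{2h}{\pi} - \frac{4}{\pi} + \frac{3}{\pi h} + O(h^{-2})$ (using $k=h/4$) and $(\mu(h)-1)^2 = \frac{2h}{\pi} - \frac{1}{\pi} + \frac{1}{4\pi h} + O(h^{-2})$; the leading $2h/\pi$ terms cancel and the constant terms combine to $-3/\pi$, while the $h^{-1}$ coefficients combine to $(3 - 1/4)/\pi = 11/(4\pi)$, proving~\eqref{eq:ineq_sigma2}. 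The bound on $\beta(h)$ comes from extending the Stirling expansion one more term and checking the sign. For~\eqref{eq:ineq_sigma1}, the value $\sigma(4)^2 = 1/4$ is a direct calculation, the limit $1 - 3/\pi$ is immediate from~\eqref{eq:ineq_sigma2}, and the strict inequalities follow once monotonicity is established. For monotonicity of $\sigma^2$ on $4\Z$, I write $\sigma(h)^2 = 1 + b(h) - c(h)$ with $c(h) = (\mu(h)-1)^2$ and $b(h) = h(h-1)\binom{h/2}{h/4}^2/2^{h+1}$, derive the closed-form ratios
\[
\frac{c(h+4)}{c(h)} = \left(\frac{(h+1)(h+3)}{h(h+2)}\right)^{\!2}, \qquad
\frac{b(h+4)}{b(h)} = \frac{(h+3)(h+2)^2}{h(h-1)(h+4)},
\]
and reduce $\sigma(h+4)^2 < \sigma(h)^2$ to a single polynomial inequality in $h$, which I would verify for $h\ge 4$.

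The main obstacle is keeping the Stirling bookkeeping clean enough to produce the \emph{exact} coefficient $11/(4\pi)$ in~\eqref{eq:ineq_sigma2} and the explicit constants $(4\sqrt{\pi}-7)/2$ and $12-37/\pi$ in~\eqref{eq:alpha_bd} and~\eqref{eq:beta_bd}: the leading $2h/\pi$ terms in $b(h)$ and $c(h)$ cancel exactly, so $\sigma^2$ sits on the second-order cancellation, and the sign of every retained Stirling correction matters. A secondary annoyance is verifying the $\sigma^2$ monotonicity cleanly; the ratio approach above reduces it to a polynomial inequality, but if that polynomial is awkward I would fall back on combining the expansion~\eqref{eq:ineq_sigma2} (which shows strict decrease for $h$ past some explicit threshold) with a direct finite computation for the remaining small values of $h\in 4\Z$.
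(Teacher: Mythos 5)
Your route to the estimates \eqref{eq:ineq_mu1}--\eqref{eq:beta_bd} is essentially the paper's: the paper expands $\ln\binom{2m}{m}$ via the $\ln\Gamma$ asymptotic series (with truncation bounds) at $m=h/2$ and $m=h/4$ and then does the "straightforward but tedious" bookkeeping, which is the same work as your Stirling/Robbins computation; your intermediate coefficients check out ($b(h)=\frac{2h}{\pi}-\frac{4}{\pi}+\frac{3}{\pi h}+O(h^{-2})$, $c(h)=\frac{2h}{\pi}-\frac{1}{\pi}+\frac{1}{4\pi h}+O(h^{-2})$, giving $11/(4\pi)$). Where you genuinely improve on the paper is the monotonicity of $\mu$: your exact ratio $\frac{\mu(h+4)-1}{\mu(h)-1}=\frac{(h+1)(h+3)}{h(h+2)}>1$ is correct (I verified the factorial cancellation) and is cleaner than the paper's argument, which deduces monotonicity of $\mu$ from the expansion \eqref{eq:ineq_mu2}--\eqref{eq:alpha_bd}.

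However, your primary argument for the monotonicity of $\sigma^2$ has a real gap. The ratios $r_c(h)=c(h+4)/c(h)$ and $r_b(h)=b(h+4)/b(h)$ are indeed the rational functions you state, but the inequality $\sigma(h+4)^2<\sigma(h)^2$ is
\[
\bigl(r_b(h)-1\bigr)\,b(h) \;<\; \bigl(r_c(h)-1\bigr)\,c(h),
\]
and this still contains $b(h)$ and $c(h)$ themselves, which are \emph{not} rational functions of $h$ (they involve central binomial coefficients); so it does not reduce to a polynomial inequality in $h$. Worse, the comparison is genuinely delicate: it is equivalent to $b(h)/c(h)<(r_c(h)-1)/(r_b(h)-1)$, and both sides expand as $1-\frac{3}{2h}+O(h^{-2})$, so the two sides agree to first order and you need two-sided bounds on $b/c$ accurate to $O(h^{-2})$ --- which is exactly the information contained in \eqref{eq:ineq_sigma2}--\eqref{eq:beta_bd}. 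In other words, the ratio trick buys nothing here; you are forced back to your fallback. The fallback is sound and is precisely the paper's proof: from $\beta\ge 0$ one gets $\sigma(h+4)^2\le 1-\frac{3}{\pi}+\frac{11}{4\pi(h+4)}$, from $\beta\le 0.23$ one gets $\sigma(h)^2\ge 1-\frac{3}{\pi}+\frac{11}{4\pi h}-\frac{0.23}{h^2}$, and the middle comparison reduces to $\frac{11h}{\pi(h+4)}>0.23$, which holds for every $h\ge 4$ --- so, contrary to your hedge, no finite computation for small $h$ is needed once \eqref{eq:ineq_sigma2}--\eqref{eq:beta_bd} are established.
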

\begin{proof}
{From} the well-known asymptotic expansion of $\ln\Gamma(z)$ we obtain,
as in~\cite{KS}, 
an asymptotic expansion for the logarithm of the central binomial coefficient:
\begin{equation}		\label{eq:ln_central_binom}
\ln\binom{2m}{m} \sim m\ln 4 - \frac{\ln(\pi m)}{2}
	- \sum_{k \ge 1} \frac{B_{2k}(1-4^{-k})}{k(2k-1)}\,m^{1-2k}\,.
\end{equation}
Here the $B_{2k}$ are Bernoulli numbers, and $(-1)^{k+1}B_{2k}$ is
positive.
The sum is not convergent, but the terms in the sum alternate in sign
and the asymptotic expansion is
strictly enveloping in the sense of P\'olya and Szeg\"o\footnote{See,
for example, Corollaries~$2$--$3$ of arXiv:1608.04834v2.},
so upper and lower bounds may be found
by truncating the series after an even or an odd number of terms.

The inequalities \eqref{eq:ineq_mu1}--\eqref{eq:beta_bd}
now follow from a straightforward but tedious computation,
using the expressions for $\mu(h)$ and
$\sigma(h)^2$ in Lemma~\ref{lemma:variance}
and approximations obtained from~\eqref{eq:ln_central_binom} 
with $m=h/2$ and $m=h/4$.
Note that the leading terms (of order $h$) cancel in the computation
of $\sigma(h)^2$.

The monotonicity of $\mu(h)$ and $\sigma(h)^2$ follows from the inequalities
\eqref{eq:ineq_mu2}--\eqref{eq:alpha_bd} 
and \eqref{eq:ineq_sigma2}--\eqref{eq:beta_bd} respectively.
For example, from \eqref{eq:ineq_sigma2}, using the bounds on
$\beta(h)$ in~\eqref{eq:beta_bd}, we have
\[\sigma(h+4)^2 \le 1 - \frac{3}{\pi} + \frac{11}{4\pi(h+4)}
 <  1 - \frac{3}{\pi} + \frac{11}{4\pi h} - \frac{0.23}{h^2}
 \le \sigma(h)^2.\]
\end{proof}

\begin{remark}
{\rm
Because $\mu(h)$ is of order $h^{1/2}$ but $\sigma(h)^2$ is of order $1$,
the distribution of $g_{ii}$ is concentrated around the mean, and we expect
values smaller than $(1-\ve)\mu(h)$ to occur with low probability.
For fixed positive $\ve$, the probability should tend to zero as 
$h \to \infty$.
We can use Chebyshev's or Cantelli's inequality
to obtain bounds on this probability.
}
\end{remark}

\section{Gaps between Hadamard orders}	\label{sec:gaps}

In order to apply our results to obtain a lower bound on $\Dbar(n)$ for
given $n$, we need to know the order $h$ of a Hadamard matrix with
$h \le n$ and $n-h$ 
as small as possible.  Thus, it is necessary to consider the size of
possible gaps in the sequence
$(h_i)_{i \ge 1}$ of Hadamard
orders.  
We define the \emph{Hadamard gap function} $\gamma:\R \to \Z$ by
\begin{equation}	\label{eq:gamma_defn}
\gamma(x) := \max \{h_{i+1}-h_i \,|\, h_{i} \le x\} \cup \{0\}\,.
\end{equation}
In~\cite{rpb249,LL} it was shown,
using the Paley and Sylvester constructions, 
that $\gamma(n)$ can be bounded using the prime-gap function.
For example, if $p$ is an odd prime, then $2(p+1)$ is a Hadamard order.
However, only rather weak bounds on the prime-gap function are known.
A different approach which produces asymptotically-stronger 
bounds employs results of 
Seberry~\cite{Seberry}, as subsequently sharpened
by Craigen~\cite{Craigen1} and Livinskyi~\cite{Livinskyi}.
These results take the following form: 
for any odd positive
integer~$q$, a Hadamard matrix of order $2^t q$ exists for every integer
\[t \ge \alpha \log_2(q) + \beta,\]
where $\alpha$ and $\beta$ are 
constants.
Seberry~\cite{Seberry} obtained $\alpha=2$. 
Craigen~\cite{Craigen1}
improved this to $\alpha = 2/3$, $\beta = 16/3$,
and later obtained 
$\alpha = 3/8$ 
in unpublished 
work with Tiessen quoted
in~\cite[Thm.~2.27]{Horadam}
and \cite{Craigen2,CK}.%
\footnote{There are typographical errors
in~\cite[Thm.~2.27]{Horadam} and
in~\cite[Thm.~1.43]{CK}, 
where the floor function should
be replaced by the ceiling function. This has the effect of increasing
the additive constant $\beta$.}
Livinskyi~\cite{Livinskyi} found $\alpha = 1/5$, $\beta = 64/5$.
The results of Craigen and of Livinskyi
depend on the construction of Hadamard
matrices via signed groups, Golay numbers
and Turyn-type sequences~\cite{BDKR,KK,SY,Turyn74}.

The connection between these results and the
Hadamard gap function is given by Lemma~\ref{lemma:conversion}.  
{From} the lemma
and the results of Livinskyi, the Hadamard gap function satisfies
\begin{equation}	\label{eq:gamma_Livinskyi}
\gamma(n) = O(n^{1/6}).
\end{equation}
This is much sharper than  $\gamma(n) = O(n^{21/40})$ 
arising from the best current result for prime gaps (by
Baker, Harman and Pintz~\cite{BHP}),
although not as sharp as the result
$\gamma(n) = O(\log^2 n)$ that would follow from
Cram\'er's prime-gap conjecture~\cite{rpb249,Cramer,Shanks64,Silva}.
\begin{lemma}	\label{lemma:conversion}
Suppose there exist positive constants $\alpha$, $\beta$ such that
$2^t q \in \Had$
for all odd positive integers $q$
and all integers $t \ge \alpha \log_2(q) + \beta$.
Then the Hadamard gap function $\gamma(n)$ satisfies
\[\gamma(n) = O_{\beta}(n^{\alpha/(1+\alpha)})\,.
\]
\end{lemma}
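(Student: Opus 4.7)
The plan is to show that for every integer $m$ at least some constant $m_0(\beta)$, there is a Hadamard order $h = 2^t q \in \Had$ (with $q$ odd) satisfying $0 \le m - h = O_\beta(m^{\alpha/(1+\alpha)})$. The stated bound on $\gamma(n)$ will then follow by a short standard argument.

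The key construction exploits the hypothesis as follows. For any fixed exponent $t$, the set $\{2^t q : q\text{ odd},\; q \le 2^{(t-\beta)/\alpha}\}$ lies in $\Had$, and within this range its elements form an arithmetic progression with common difference $2^{t+1}$. So given $m$, if I pick $t$ and let $q$ be the largest odd integer with $2^t q \le m$, then $m - 2^t q < 2^{t+1}$, and this $2^t q$ is guaranteed to be in $\Had$ provided $q \le 2^{(t-\beta)/\alpha}$.

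To make this effective I would choose $t$ as small as possible consistent with that size constraint on $q$. Setting $t := \lceil (\alpha \log_2 m + \beta)/(1+\alpha)\rceil$, the defining inequality rearranges to $m/2^t \le 2^{(t-\beta)/\alpha}$, so the choice $q := $ largest odd integer $\le m/2^t$ automatically meets the bound and yields $2^t q \in \Had$. With this choice,
\[
2^{t+1} \;\le\; 4 \cdot 2^{\beta/(1+\alpha)}\, m^{\alpha/(1+\alpha)} \;=\; O_\beta\bigl(m^{\alpha/(1+\alpha)}\bigr),
\]
which is the claimed approximation to $m$ from below.

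Finally, to convert this into a bound on $\gamma(n)$, given consecutive Hadamard orders $h_i \le n$ I would apply the construction at $m = h_i + K$, where $K := \lceil C_\beta\, n^{\alpha/(1+\alpha)} \rceil$ and $C_\beta$ is chosen so that the bound $2^{t+1}$ above is strictly less than $K$ uniformly for all $m \le 2n$. Since the constructed Hadamard order then lies in the interval $(h_i,\, h_i+K]$, we conclude $h_{i+1} - h_i \le K = O_\beta(n^{\alpha/(1+\alpha)})$. The main (and only) obstacle is the bookkeeping to verify that a single constant $C_\beta$ suffices uniformly over $h_i \le n$, and to dispatch small $n$ trivially; neither step is deep, since for small $n$ the bound $\gamma(n) \le n$ already holds and for large $n$ the self-referential inequality $K \ge 2^{t+1}$ at $m = h_i + K \le 2n$ is satisfied by any sufficiently large choice of $C_\beta$ depending only on $\alpha$ and $\beta$.
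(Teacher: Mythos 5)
Your proposal is correct, and although it rests on exactly the same balancing act as the paper's proof, it is organized in a genuinely different (and in one respect more watertight) way. Both arguments exploit the fact that, for fixed $t$, the guaranteed orders $2^t q$ (with $q$ odd, $q \le 2^{(t-\beta)/\alpha}$) form a progression of common difference $2^{t+1}$, and both choose $t$ so that $2^{t+1}$ balances against the size of the order; your constant $4\cdot 2^{\beta/(1+\alpha)}$ is precisely the one in the paper. The difference is the indexing. The paper works pairwise: it fixes consecutive odd integers $q_0 = q_1 - 2$ and $q_1$, uses the single exponent $t = \lceil \alpha\log_2(q_1)+\beta\rceil$, and bounds $2^t q_1 - 2^t q_0 = 2^{t+1}$ in terms of $2^tq_0$; for this to bound the gap function everywhere, it implicitly needs the intervals $[2^t q_0,\, 2^t q_1]$, as $q_1$ runs over the odd integers, to cover all large integers. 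As literally stated they do not: whenever $\lceil \alpha\log_2(q_1+2)+\beta\rceil$ exceeds $\lceil \alpha\log_2(q_1)+\beta\rceil$, the stretch from $2^t q_1$ to $2^{t+1} q_1$ (of length comparable to the orders themselves) lies in none of these intervals, and it must be handled by orders with a larger exponent. Your point-indexed formulation supplies exactly that fix automatically: for each target $m$ you take the least admissible exponent $t(m) = \lceil(\alpha\log_2 m + \beta)/(1+\alpha)\rceil$, so every sufficiently large integer has a Hadamard order within $4\cdot 2^{\beta/(1+\alpha)} m^{\alpha/(1+\alpha)}$ below it, and your look-ahead step (applying this at $m = h_i + K$ with $K$ chosen strictly larger than the uniform error bound for $m \le 2n$) converts this one-sided approximation into the gap bound rigorously. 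In short, the paper's pair-indexed argument is shorter but leaves the covering step implicit (and incomplete as written), while yours costs the extra bookkeeping you acknowledge and is airtight at exactly that step; the only quibble is that your trivial small-$n$ dispatch should invoke monotonicity of $\gamma$ and a constant bound $\gamma(n_1) = O_{\alpha,\beta}(1)$ (or Sylvester doubling) rather than the claim $\gamma(n) \le n$, which is not itself a consequence of the stated hypothesis.
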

\begin{proof}
Consider consecutive odd integers $q_0$, $q_1 = q_0+2$ and corresponding
$h_i = 2^tq_i$, where $t = \lceil\alpha \log_2(q_1) + \beta\rceil$. 
By assumption
there exist Hadamard matrices of orders $h_0$, $h_1$.
Also, $2^\beta q_1^\alpha \le 2^t < 2^{\beta+1}q_1^\alpha$.
Thus \[h_1 = 2^tq_1 \ge 2^\beta q_1^{1+\alpha}\]
and
\[h_1-h_0 
 = 2^{t+1} < 2^{\beta+2}q_1^\alpha
 \le 2^{\beta+2}(h_1 / 2^\beta)^{\alpha/(1+\alpha)}
 \le 4 \cdot 2^{\beta/(1+\alpha)}h_1^{\alpha/(1+\alpha)}.
\]
Now $2^t \le h_0$, so $h_1 = h_0 + 2^{t+1} \le 3h_0$.
Also, $\frac{\alpha}{1+\alpha} < 1$ and $\frac{1}{1+\alpha} < 1$.
Thus
\[h_1-h_0 
  < 12\cdot 2^\beta h_0^{\alpha/(1+\alpha)}
  = O_{\beta}(h_0^{\alpha/(1+\alpha)}).\]
\end{proof}

\section{Preliminary results}			\label{sec:lemmas}

We now state some well-known results
(Propositions \ref{prop:Chebyshev}--\ref{prop:Lovasz})
and prove some lemmas
that are needed in \S\ref{sec:bounds}. 

\subsection{Probability inequalities}	\label{subsec:probability}

Proposition~\ref{prop:Chebyshev} is the well-known inequality
of Chebyshev~\cite{Chebyshev},
and Proposition~\ref{prop:Cantelli} is a one-sided
analogue due to Cantelli~\cite{Cantelli}.
\begin{proposition}[Chebyshev]	\label{prop:Chebyshev}
Let $X$ be a random variable with finite mean $\mu = \E[X]$ 
and finite variance
$\sigma^2 = \V[X]$.  Then, for all $\lambda > 0$,
\[
\Prob[\,|X-\mu| \ge \lambda\,] \le \frac{\sigma^2}{\lambda^2}\,\raisedot
\]
\end{proposition}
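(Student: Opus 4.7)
The plan is to obtain Chebyshev's inequality as an immediate consequence of Markov's inequality applied to the nonnegative random variable $(X-\mu)^2$. First I would establish Markov's inequality in the form: for any nonnegative random variable $Y$ with finite expectation and any $a > 0$,
\[
\Prob[Y \ge a] \le \frac{\E[Y]}{a}\,.
\]
The standard one-line justification is $\E[Y] \ge \E[Y \cdot \mathbf{1}_{\{Y \ge a\}}] \ge a\,\Prob[Y \ge a]$, where the first inequality uses $Y \ge 0$ and the second uses $Y \ge a$ on the event in question.

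Next I would observe that since $\lambda > 0$, the events $\{|X-\mu| \ge \lambda\}$ and $\{(X-\mu)^2 \ge \lambda^2\}$ coincide, because squaring is strictly monotonic on $[0,\infty)$ and $|X-\mu| \ge 0$. Applying Markov's inequality to the nonnegative variable $Y := (X-\mu)^2$ with $a := \lambda^2$ then yields
\[
\Prob[\,|X-\mu| \ge \lambda\,] \;=\; \Prob[(X-\mu)^2 \ge \lambda^2] \;\le\; \frac{\E[(X-\mu)^2]}{\lambda^2} \;=\; \frac{\sigma^2}{\lambda^2}\,,
\]
which is precisely the claimed bound. The finiteness of $\mu$ and $\sigma^2$ assumed in the hypothesis ensures that all quantities above are well-defined.

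There is essentially no obstacle here; this is the textbook derivation, and the only real choice is whether to cite Markov's inequality as a black box or derive it inline (I would do the latter in one line for self-containedness). A minor point worth flagging is that the inequality is trivially true when $\sigma^2 = \infty$ or when $\lambda^2 \le \sigma^2$ (the right-hand side being $\ge 1$), so the content lies in the regime $\lambda \gg \sigma$, which is exactly how it will be used in \S\ref{sec:bounds} via the concentration of $g_{ii}$ around $\mu(h)$ described in the remark following Lemma~\ref{lemma:sigma_asymptotics}.
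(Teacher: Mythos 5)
Your proof is correct and complete: the reduction of Chebyshev's inequality to Markov's inequality applied to $(X-\mu)^2$, with Markov's inequality itself verified by the one-line indicator argument, is the standard derivation. The paper offers no proof of this proposition at all — it is stated as the classical result of Chebyshev with a citation — so your self-contained argument is exactly the textbook proof that the citation stands in for, and there is nothing further to reconcile.
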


\begin{proposition}[Cantelli]	\label{prop:Cantelli}
Let $X$ be a random variable with finite mean $\mu = \E[X]$ 
and finite variance $\sigma^2 = \V[X]$. Then, for all $\lambda > 0$,
\[
\Prob[X-\mu \ge \lambda] \le \frac{\sigma^2}{\sigma^2+\lambda^2}
\;\;\text{ and }\;\;
\Prob[X-\mu \le -\lambda] \le \frac{\sigma^2}{\sigma^2+\lambda^2}\,\raisedot
\]
\end{proposition}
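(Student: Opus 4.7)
The plan is to prove Cantelli's inequality by a Markov/Chebyshev-style argument with a shift parameter that is then optimized, and then obtain the second (lower-tail) inequality by a symmetry argument.

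First I would reduce to the centered case: let $Y = X - \mu$, so $\E[Y] = 0$ and $\V[Y] = \sigma^2$. The key trick is to introduce a free parameter $u > 0$ and note that, since $\lambda + u > 0$, the event $\{Y \ge \lambda\}$ coincides with the event $\{Y + u \ge \lambda + u\}$, which is contained in $\{(Y+u)^2 \ge (\lambda+u)^2\}$. Applying the ordinary Markov inequality to the nonnegative random variable $(Y+u)^2$ yields
\[
\Prob[X - \mu \ge \lambda] \;=\; \Prob[Y + u \ge \lambda + u] \;\le\; \frac{\E[(Y+u)^2]}{(\lambda+u)^2} \;=\; \frac{\sigma^2 + u^2}{(\lambda+u)^2},
\]
where we used $\E[Y] = 0$ to simplify $\E[(Y+u)^2] = \sigma^2 + u^2$.

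Next I would optimize the right-hand side over $u > 0$. Differentiating with respect to $u$ gives a critical point when $2u(\lambda+u) = 2(\sigma^2+u^2)$, i.e.\ $u = \sigma^2/\lambda$; this is easily verified to be a minimum. Substituting $u = \sigma^2/\lambda$ back in, the numerator becomes $\sigma^2(\lambda^2+\sigma^2)/\lambda^2$ and the denominator $(\lambda^2+\sigma^2)^2/\lambda^2$, so the ratio collapses to $\sigma^2/(\sigma^2+\lambda^2)$, which is exactly the claimed bound.

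For the second inequality, apply the first inequality to the random variable $X' = -X$, which has mean $-\mu$ and the same variance $\sigma^2$: the event $\{X - \mu \le -\lambda\}$ is identical to $\{X' - (-\mu) \ge \lambda\}$, so
\[
\Prob[X-\mu \le -\lambda] \;=\; \Prob[X' - \E[X'] \ge \lambda] \;\le\; \frac{\sigma^2}{\sigma^2 + \lambda^2}.
\]
There is really no obstacle here: the only step that requires any care is ensuring $\lambda + u > 0$ so that squaring preserves the inequality when passing from $\{Y + u \ge \lambda + u\}$ to $\{(Y+u)^2 \ge (\lambda+u)^2\}$, and this is automatic from $\lambda, u > 0$. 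The ``hard'' (i.e.\ nonroutine) step is simply recognizing that introducing the free parameter $u$ and then optimizing is what upgrades the two-sided Chebyshev bound $\sigma^2/\lambda^2$ to the sharper one-sided bound $\sigma^2/(\sigma^2+\lambda^2)$.
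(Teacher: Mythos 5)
Your proof is correct: the shift-and-optimize argument (Markov applied to $(Y+u)^2$, minimizing over $u>0$ to get $u=\sigma^2/\lambda$) is sound, and the reduction of the lower-tail bound to the upper-tail bound via $X \mapsto -X$ is handled properly. Note, however, that the paper does not prove this proposition at all~-- it states Cantelli's inequality as a known result with a citation to Cantelli's original 1910 paper~-- so there is no internal proof to compare against; your argument is the standard one and would serve as a self-contained justification. The only cosmetic remark is that in the degenerate case $\sigma^2 = 0$ the optimal $u$ lies on the boundary $u=0$, so one should phrase the conclusion as an infimum over $u>0$ (the bound still follows), but this does not affect the substance of the proof.
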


Proposition~\ref{prop:Hoeffding} is
a two-sided version of Hoeffding's
tail inequality. A~one-sided version is proved
in~\cite[Theorem 2]{Hoeffding}.
Hoeffding's inequality gives a sharper bound than Chebyshev's inequality
in the case that the random variable~$X$ is a sum of independent,
bounded random variables~$X_i$.
\begin{proposition}[Hoeffding] 	\label{prop:Hoeffding}
Let $X_1,\ldots,X_h$ be independent random variables
with sum
$X = X_1 + \cdots + X_h$. 
Assume that
$X_i \in [a_i, b_i]$ and, for some~$i\le h$, $a_i < b_i$.
Then, for all $\lambda > 0$,
\[
\Prob[\,|X - E[X]| \ge \lambda\,] \;\le\; 
 2\,\exp\left(\frac{-2\lambda^2}{\sum_{i=1}^h (b_i-a_i)^2} \right)\,\raisedot
\]
\end{proposition}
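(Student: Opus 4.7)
The plan is to deduce the two-sided inequality from the one-sided version cited in \cite[Theorem~2]{Hoeffding} by a symmetry argument combined with a union bound. Write $\mu := \E[X] = \sum_i \E[X_i]$ and $\Sigma := \sum_{i=1}^h (b_i - a_i)^2$. Note that $\Sigma > 0$, since by hypothesis $a_i < b_i$ for some $i$; this is what makes the exponent on the right-hand side well-defined.

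First, I would apply the one-sided Hoeffding inequality directly to $X_1,\ldots,X_h$ to obtain
\[
\Prob[X - \mu \ge \lambda] \;\le\; \exp\!\left(\frac{-2\lambda^2}{\Sigma}\right).
\]
Next, to handle the lower tail, I would introduce $Y_i := -X_i$ for $1 \le i \le h$. These remain independent, and $Y_i \in [-b_i, -a_i]$ with interval length $(-a_i) - (-b_i) = b_i - a_i$, so the parameter $\Sigma$ is invariant under this negation. Setting $Y := Y_1 + \cdots + Y_h = -X$, we have $\E[Y] = -\mu$, and applying the one-sided inequality to $Y$ gives
\[
\Prob[X - \mu \le -\lambda] \;=\; \Prob[Y - \E[Y] \ge \lambda] \;\le\; \exp\!\left(\frac{-2\lambda^2}{\Sigma}\right).
\]

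Finally, I would combine these via the union bound
\[
\Prob[\,|X - \mu| \ge \lambda\,] \;\le\; \Prob[X - \mu \ge \lambda] + \Prob[X - \mu \le -\lambda],
\]
which yields the factor of $2$ in the stated bound. There is no genuine obstacle here: the argument is entirely routine once one recognises that negation preserves both independence and the interval lengths $b_i - a_i$. The only point requiring a moment's care is the hypothesis $a_i < b_i$ for some $i$, which prevents the bound from degenerating into the indeterminate form $\exp(-2\lambda^2/0)$ in the trivial case where every $X_i$ (and hence $X$) is almost surely constant.
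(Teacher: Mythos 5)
Your proof is correct and coincides with the route the paper itself takes: the paper merely cites the one-sided bound of \cite[Theorem~2]{Hoeffding} and states the two-sided version as a proposition, the negation argument (which preserves independence and the interval lengths $b_i-a_i$) plus the union bound being exactly the standard, implicitly assumed derivation of the factor~$2$.
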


We also need the ``symmetric'' case of the Lov\'asz Local Lemma~\cite{EL}, 
where ``symmetric'' means that
the upper bound on the probability of each event is the same.
We state the formulation given in~\cite[Corollary 5.1.2]{AS}, with a slight
change of notation.

\begin{proposition}[Lov\'asz Local Lemma, symmetric case] \label{prop:Lovasz}
Let $E_1, E_2, \ldots E_m$ be events in an arbitrary probability space.
Suppose that each event $E_i$ is mutually independent of all the other
events $E_j$ except for at most $D$ of them, and that
$\Prob[E_i] \le p$ for $1 \le i \le m$.  If
\begin{equation}
ep(D+1) \le 1,		\label{eq:Lovasz-sufficient1}
\end{equation}
then
\[\Prob\left[\bigwedge_{i=1}^m \overline{E_i}\right] > 0\]
{\rm (in other words, with positive probability none of the events $E_i$
holds).}
\end{proposition}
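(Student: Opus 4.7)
My plan is to deduce the symmetric form as a specialization of the \emph{general} Lov\'asz Local Lemma, which I will prove first by induction. The general statement asserts: if one can assign weights $x_1,\ldots,x_m \in [0,1)$ such that
\[
\Prob[E_i] \le x_i \prod_{j \in N(i)} (1-x_j)
\]
for every $i$, where $N(i)$ is a set of at most $D$ indices outside of which $E_i$ is mutually independent of the remaining events, then
\[
\Prob\!\left[\bigwedge_{i=1}^m \overline{E_i}\right] \;\ge\; \prod_{i=1}^m (1-x_i) \;>\; 0.
\]

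The proof of the general lemma proceeds by establishing, by induction on $|S|$, the auxiliary bound
\[
\Prob\!\left[E_i \;\Big|\; \bigwedge_{j \in S} \overline{E_j}\right] \le x_i
\]
for every $i \notin S$. For the inductive step I would partition $S = S_1 \cup S_2$, where $S_1 = S \cap N(i)$ and $S_2 = S \setminus N(i)$, then expand the conditional probability as a ratio and bound numerator and denominator separately: the numerator uses the mutual-independence of $E_i$ from the events indexed by $S_2$ together with the hypothesis on $\Prob[E_i]$, and the denominator is lower-bounded by repeatedly applying the inductive hypothesis (to the smaller conditioning set $S_2$) to events indexed by $S_1$. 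A short telescoping calculation then yields $x_i$. Chaining these conditional bounds gives the product lower bound on $\Prob[\bigwedge \overline{E_i}]$.

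To obtain the symmetric statement I would set $x_i = 1/(D+1)$ uniformly and verify the hypothesis: each $E_i$ has at most $D$ neighbors, so
\[
x_i \prod_{j \in N(i)} (1-x_j) \;\ge\; \frac{1}{D+1}\left(1-\frac{1}{D+1}\right)^{\!D}.
\]
Using the elementary inequality $(1 - 1/(D+1))^D \ge 1/e$ (valid for all $D \ge 1$, and trivial for $D=0$), the right-hand side is at least $1/(e(D+1))$, which is $\ge p$ precisely under the assumption $ep(D+1)\le 1$. Hence the general lemma applies and delivers $\Prob[\bigwedge \overline{E_i}] \ge (1-1/(D+1))^m > 0$.

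The principal technical hurdle is the inductive argument on conditional probabilities; the delicate point is that $E_i$ is only asserted to be mutually independent of events in the complement of $N(i)$, so one cannot simply drop the conditioning on $S_1$, and must instead apply the inductive hypothesis to the smaller set $S_2$ (which requires the induction to be stated in the stronger ``for every $i \notin S$ simultaneously'' form). Once this is in place, the deduction of the symmetric version from the weighted one is a one-line optimization of $x_i$, and the bound $(1-1/(D+1))^D \ge 1/e$ is the standard source of the factor $e$ in the hypothesis \eqref{eq:Lovasz-sufficient1}.
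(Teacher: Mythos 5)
The paper states this proposition without proof, quoting it as a known result of Erd\H{o}s and Lov\'asz in the formulation of Alon and Spencer~\cite[Corollary 5.1.2]{AS}; your argument is precisely the standard derivation found in that cited source: prove the general weighted local lemma by induction on the size of the conditioning set (splitting $S$ into $S_1 = S \cap N(i)$ and $S_2 = S \setminus N(i)$, bounding the numerator by mutual independence over $S_2$ and the denominator by the inductive hypothesis), then specialize $x_i = 1/(D+1)$ and use $\left(1 - 1/(D+1)\right)^D \ge 1/e$. Your proof is correct, including the key subtlety you flag (that $E_i$ is only mutually independent of events outside $N(i)$, so the conditioning on $S_1$ must be peeled off via the induction rather than dropped) and the separate disposal of the degenerate case $D=0$, so it fills in exactly what the paper delegates to the literature.
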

\begin{remark}
{\rm
It follows from a theorem of Shearer~\cite{Shearer} that the
inequality~\eqref{eq:Lovasz-sufficient1} can be replaced by
$epD \le 1$.  This improvement would make little difference
to our results, so we use the better-known
condition~\eqref{eq:Lovasz-sufficient1}.
}
\end{remark}

\subsection{Perturbation bounds}	\label{subsec:perturbation}

We state some lower bounds on the determinant of a matrix
which is close to the identity matrix. 
Lemma~\ref{lemma:super_duper_perturbation_bound} generalises and
sharpens some inequalities due to
Ostrowski~\cite{Ostrowski37a,Ostrowski38}
and von Koch~\cite{Koch}.
Note that the condition on $e_{ii}$ in
Lemma~\ref{lemma:super_duper_perturbation_bound}
is one-sided.  This is useful if we want to apply
Cantelli's inequality, as in the proofs of
Theorems~\ref{thm:Cantelli_Hoeffding_Lovasz}--%
\ref{thm:two_parameter_bound} below.

\begin{lemma}	\label{lemma:super_duper_perturbation_bound}
If $M = I-E \in \R^{d\times d}$, where $|e_{ij}| \le \ve$
for $i \ne j$ and $e_{ii} \le \delta$ for $1 \le i \le d$, 
where $\delta\ge 0$ and $\delta + (d-1)\ve \le 1$, then
\[\det(M) \ge (1 - \delta - (d-1)\ve)
	      (1 - \delta + \ve)^{d-1}\,.\]
\end{lemma}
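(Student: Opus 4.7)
The plan is to prove this by induction on $d$, using the Schur complement to reduce from the $d \times d$ case to the $(d-1) \times (d-1)$ case. The base $d = 1$ is immediate: $\det(M) = 1 - e_{11} \ge 1 - \delta$, matching the stated bound.

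For the inductive step, first note that $m_{11} = 1 - e_{11} \ge 1 - \delta \ge (d-1)\varepsilon \ge 0$, so we may assume $m_{11} > 0$; the boundary case follows by continuity. By Lemma~\ref{lemma:Schur}, $\det(M) = m_{11}\det(M/m_{11})$, where $M/m_{11}$ is the $(d-1) \times (d-1)$ Schur complement. Writing $M/m_{11} = I - E'$, direct computation gives $e'_{ii} = e_{ii} + m_{i1}m_{1i}/m_{11}$ and $e'_{ij} = e_{ij} + m_{i1}m_{1j}/m_{11}$ for $i \ne j$, so $E'$ satisfies the hypotheses of the lemma with parameters $\delta' := \delta + \varepsilon^2/m_{11}$ and $\varepsilon' := \varepsilon + \varepsilon^2/m_{11}$. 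A short check (using $(d-1)\varepsilon^2/m_{11} \le \varepsilon$, which follows from $(d-1)\varepsilon \le 1 - \delta \le m_{11}$) verifies that $\delta' + (d-2)\varepsilon' \le 1$, so the inductive hypothesis applies.

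The two key algebraic identities are $1 - \delta' + \varepsilon' = 1 - \delta + \varepsilon$ (the $\varepsilon^2/m_{11}$ terms cancel) and $1 - \delta' - (d-2)\varepsilon' = 1 - \delta - (d-2)\varepsilon - (d-1)\varepsilon^2/m_{11}$. The inductive hypothesis combined with these identities then yields
\[
\det(M) \ge \bigl[m_{11}\bigl(1-\delta-(d-2)\varepsilon\bigr) - (d-1)\varepsilon^2\bigr]\bigl(1-\delta+\varepsilon\bigr)^{d-2}.
\]
Rewriting the target factor as $(1-\delta-(d-1)\varepsilon)(1-\delta+\varepsilon) = (1-\delta)\bigl[1-\delta-(d-2)\varepsilon\bigr] - (d-1)\varepsilon^2$ and dividing both sides by the positive quantity $(1-\delta+\varepsilon)^{d-2}$, the desired inequality collapses to
\[
\bigl(m_{11} - (1-\delta)\bigr)\bigl[1-\delta-(d-2)\varepsilon\bigr] \ge 0,
\]
which is clear since $m_{11} \ge 1-\delta$ and $1-\delta-(d-2)\varepsilon \ge 1 - \delta - (d-1)\varepsilon \ge 0$.

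The main obstacle is pinpointing the correct parametrization of the Schur-complemented hypothesis so that the induction telescopes cleanly: the cancellation producing $1 - \delta' + \varepsilon' = 1 - \delta + \varepsilon$ is what allows the factor $(1-\delta+\varepsilon)^{d-1}$ to accumulate over $d-1$ successive Schur reductions, while the one-sided constraint $e_{ii} \le \delta$ (rather than $|e_{ii}| \le \delta$) is essential because it permits us to pivot on a diagonal entry of arbitrary size $m_{11} \ge 1-\delta$, with the surplus $m_{11} - (1-\delta) \ge 0$ absorbed into the nonnegativity of the final factored inequality.
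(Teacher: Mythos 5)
Your proof is correct, but it necessarily takes a different route from the paper, because the paper offers no argument at all for this lemma: it simply cites Corollary~1 of~\cite{rpb258}. Your induction on $d$, pivoting on $m_{11}$ via the $1\times 1$ case of Lemma~\ref{lemma:Schur}, is self-contained and uses only tools already present in the paper. The key computations check out: the Schur complement has $e'_{ij} = e_{ij} + m_{i1}m_{1j}/m_{11}$, so $(\delta',\ve') = (\delta+\ve^2/m_{11},\,\ve+\ve^2/m_{11})$ are admissible parameters; the condition $\delta'+(d-2)\ve'\le 1$ follows from $(d-1)\ve \le 1-\delta \le m_{11}$; the invariance $1-\delta'+\ve' = 1-\delta+\ve$ is what lets the factor $(1-\delta+\ve)^{d-1}$ accumulate over the induction; and the final inequality reduces, as you say, to $(m_{11}-(1-\delta))(1-\delta-(d-2)\ve)\ge 0$, both factors being nonnegative by hypothesis~-- here the one-sided condition $e_{ii}\le\delta$ is exactly what allows the pivot $m_{11}$ to exceed $1-\delta$ harmlessly. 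What your approach buys is a proof from first principles inside this paper, one that also exhibits the sharpness of the bound: equality holds when $e_{ii}=\delta$ and $e_{ij}=\ve$ for all $i\ne j$, i.e.\ $M=(1-\delta+\ve)I-\ve J$ with $J$ the all-ones matrix; what the paper's citation buys is access to the more general results of~\cite{rpb258} (perturbations of arbitrary diagonal matrices, upper bounds as well). Two small repairs to your write-up: $(1-\delta+\ve)^{d-2}$ is only nonnegative, not necessarily positive (it vanishes in the degenerate case $\delta=1$, $\ve=0$), so multiply the reduced inequality through by it rather than dividing~-- or dispose of that case separately, since $M$ is then diagonal with nonnegative diagonal entries; and the continuity argument at $m_{11}=0$ should be stated as perturbing $e_{11}$ downward to $e_{11}-s$ with $s\downarrow 0$, which keeps all hypotheses intact while making the pivot positive.
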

\begin{proof}
See~\cite[Corollary~1]{rpb258}.
\end{proof}

\begin{lemma}	\label{lemma:optimal_pert_bound}
If $M = I - E \in \R^{d\times d}$, 
$|e_{ij}| \le \ve$ for $1 \le i, j \le d$,
and $d\ve \le 1$, then 
\[\det(M) \ge 1 - d\ve.\]
\end{lemma}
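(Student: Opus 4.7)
The plan is to deduce this lemma as an immediate specialization of Lemma~\ref{lemma:super_duper_perturbation_bound}. The two-sided hypothesis $|e_{ij}| \le \ve$ for all $i, j$ (including $i = j$) implies in particular the one-sided bound $e_{ii} \le \ve$, which is all that the preceding lemma requires on the diagonal. So the natural move is to invoke that lemma with the parameter choice $\delta := \ve$.

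With this choice, both hypotheses of Lemma~\ref{lemma:super_duper_perturbation_bound} are satisfied: $\delta = \ve \ge 0$, and $\delta + (d-1)\ve = d\ve \le 1$ holds by assumption. Its conclusion then yields
\[
\det(M) \;\ge\; \bigl(1 - \ve - (d-1)\ve\bigr)\bigl(1 - \ve + \ve\bigr)^{d-1}
       \;=\; (1 - d\ve)\cdot 1^{d-1} \;=\; 1 - d\ve,
\]
which is exactly the claimed bound. There is no genuine obstacle to carry out: the whole argument is a one-line invocation of the previous lemma with a judicious choice of $\delta$, and the key insight is simply that the previous lemma's separate treatment of diagonal and off-diagonal entries is flexible enough to accommodate the ``symmetric'' setting here.

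As a sanity check on optimality (justifying the name of the lemma), I would note that the bound is sharp: taking $E = \ve J$ with $J$ the $d \times d$ all-ones matrix, $J$ has eigenvalues $d$ (simple) and $0$ (multiplicity $d-1$), so $\det(I - \ve J) = (1 - d\ve)\cdot 1^{d-1} = 1 - d\ve$ exactly. This also explains why the off-diagonal factor $(1 - \delta + \ve)^{d-1}$ from the previous lemma collapses to $1$ under the substitution $\delta = \ve$, leaving only the diagonal term to produce the full answer.
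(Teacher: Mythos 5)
Your proof is correct and matches the paper's own argument exactly: the paper also derives this lemma as the case $\delta = \ve$ of Lemma~\ref{lemma:super_duper_perturbation_bound}, where the factor $(1-\delta+\ve)^{d-1}$ collapses to $1$. Your sharpness example $E = \ve J$ is a nice additional observation but does not change the substance.
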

\begin{proof}
This is implied by the case $\delta = \ve$ of
Lemma~\ref{lemma:super_duper_perturbation_bound}.
It is also implicit in a bound due to Ostrowski~\cite[eqn.~(5,5)]{Ostrowski38}.
\end{proof}

\subsection{An inequality involving $h$ and $n$}
	\label{subsec:inequalities}

Lemma~\ref{lemma:uncond2} 
allows us to deduce inequalities
involving~$n^n$ from corresponding inequalities involving~$h^n$.

\begin{lemma} \label{lemma:uncond2}
If $n = h+d > h > 0$, then
\[(h/n)^n > \exp(-d - d^2/h).\]
\end{lemma}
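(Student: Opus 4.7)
The plan is to take logarithms and reduce the inequality to the standard fact that $\ln(1+x) < x$ for all $x > 0$. Writing $(h/n)^n = \exp(-n\ln(n/h))$ and $\exp(-d - d^2/h) = \exp(-d(h+d)/h)$, the claim becomes
\[
n\ln(n/h) < d(h+d)/h.
\]
Since $n = h+d$ and $n/h = 1 + d/h$, this is
\[
(h+d)\ln(1 + d/h) < \frac{d(h+d)}{h}.
\]

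Dividing both sides by the positive quantity $h+d$ reduces the statement to $\ln(1 + d/h) < d/h$. Setting $x = d/h > 0$, this is exactly the well-known strict inequality $\ln(1+x) < x$, which holds for every $x > 0$ (for instance, from the Taylor expansion $\ln(1+x) = x - x^2/2 + x^3/3 - \cdots$ when $0 < x \le 1$, and from the convexity of $\exp$ giving $e^x > 1+x$ in general). Reversing the chain of equivalences yields the desired inequality. There is no real obstacle: the only thing to notice is the convenient factorisation $d + d^2/h = d(h+d)/h$, which makes the division by $h+d$ clean and leaves the universal inequality $\ln(1+x) < x$ as the only analytic input.
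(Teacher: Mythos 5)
Your proof is correct and takes essentially the same route as the paper: the paper sets $x = d/n$ and invokes $\ln(1-x) > -x/(1-x)$, which under the substitution $d/h = x/(1-x)$ is exactly your inequality $\ln(1+d/h) < d/h$. Both arguments amount to taking logarithms, observing that $d + d^2/h = dn/h$, and reducing to this one standard bound on the logarithm.
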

\begin{proof}
Writing $x = d/n$,
the inequality 
$\ln(1-x) > -x/(1-x)$
implies that
\[(1-x)^n \,>\, \exp\left(-\frac{nx}{1-x}\right)\,.\]
Since $1-x = h/n$, we obtain
\[\left(\frac{h}{n}\right)^n  
 \,>\, \exp\left(\frac{-d}{1-d/n}\right) 
  = \exp(-d - d^2/h).\]
\end{proof}

\pagebreak[3]
\section{Probabilistic lower bounds} \label{sec:bounds}

In this section we prove several lower bounds on $D(n)$ and $\Dbar(n)$
where, as usual, $n = h+d$ and $h$ is the order of a Hadamard matrix.
Theorem~\ref{thm:small_d} assumes that $d \le 3$;
Theorems~\ref{thm:lower_bd_via_Chebyshev}--\ref{thm:almost_all_n}
allow $d > 3$. Theorem~\ref{thm:small_d} can be extended to allow
$d>3$, but only on the assumption that $n$ is sufficiently large~--
see Theorem~$1$ of~\cite{rpb253}.

First we state a Lemma which is useful in its own right,
and is required for the proof of Theorem~\ref{thm:small_d}.

\begin{lemma}			\label{lemma:expand_det}
If $n = h+d$ where $4 \le h \in {\Had}$ and $1 \le d \le 3$, then
\[D(n) \ge h^{h/2}(\mu^d - \eta),\]
where $\mu = \mu(h)$ is as in~$\S\ref{subsec:variance}$, and
\[
\eta = \eta(h,d) =
\begin{cases}
	0 \text{ if } d = 1,\\
	1 \text{ if } d = 2,\\
	5h^{1/2} + 3 \text{ if } d = 3.\\
\end{cases}
\]
\end{lemma}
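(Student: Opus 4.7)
The plan is to apply the Schur complement lemma together with the probabilistic construction of~\S\ref{sec:construction}. With $D = -I$ we have $(\widetilde{A}/A) = -G$, so $|\det(\widetilde{A})| = h^{h/2}|\det(G)|$, and hence
\[
D(n) \;\ge\; |\E[\det(\widetilde{A})]| \;=\; h^{h/2}\,|\E[\det(G)]|.
\]
(If $\mu^d - \eta \le 0$ the lemma is vacuous, so we may assume the expectation is positive.) Expanding $\det(G)$ via the Leibniz formula and taking expectations, the task reduces to showing that the sum over non-identity permutations $\pi \in S_d$ of $|\E[\prod_i g_{i,\pi(i)}]|$ is at most $\eta(h,d)$; the identity permutation contributes exactly $\mu^d$ by Lemma~\ref{lem:G_diag}.

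For $d=1$ there is nothing to subtract, so $\eta=0$. For $d=2$ the only non-identity permutation gives $g_{12}g_{21}$, and Lemma~\ref{lem:fijfkl} yields $|\E[g_{12}g_{21}]| \le 1$, so $\eta=1$ suffices.

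For $d=3$, I would split the five non-identity permutations of $S_3$ into the three transpositions and the two $3$-cycles. Each transposition fixes one index $i$ and contributes a term $g_{ii}g_{jk}g_{kj}$ with $\{j,k\}=\{1,2,3\}\setminus\{i\}$; by Lemma~\ref{lem:independence}, $g_{ii}$ is independent of the pair $(g_{jk},g_{kj})$, so expectations factor, and Lemma~\ref{lem:fijfkl} combined with $\E[g_{ii}]=\mu$ bounds the absolute contribution by $\mu$. For a $3$-cycle I would estimate $\E[|g_{ij}g_{jk}g_{ki}|]$ by two applications of Cauchy--Schwarz,
\[
\E[|g_{ij}g_{jk}g_{ki}|] \le \sqrt{\E[g_{ij}^2 g_{jk}^2]\,\E[g_{ki}^2]}\,,\qquad
\E[g_{ij}^2 g_{jk}^2] \le \sqrt{\E[g_{ij}^4]\,\E[g_{jk}^4]}\,,
\]
and then use the pointwise bound $|g_{ij}|\le h^{1/2}$ from Lemma~\ref{lem:maxf} to convert fourth moments into second moments: $\E[g_{ij}^4] \le h\,\E[g_{ij}^2] = h$ by Lemma~\ref{lem:fij2}. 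Together these give $\E[|g_{ij}g_{jk}g_{ki}|] \le h^{1/2}$ per $3$-cycle. Finally, summing the five contributions and invoking $\mu < \sqrt{2h/\pi} + 1$ from Lemma~\ref{lemma:sigma_asymptotics}, we get
\[
3\mu + 2h^{1/2} \;<\; (3\sqrt{2/\pi} + 2)\,h^{1/2} + 3 \;<\; 5h^{1/2} + 3,
\]
since $3\sqrt{2/\pi} + 2 < 4.40$. Thus $\eta = 5h^{1/2}+3$ works.

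The main obstacle is the $3$-cycle case: none of $g_{ij}, g_{jk}, g_{ki}$ is independent of the others, so we cannot simply factor expectations as in the transposition case. The remedy is the two-step Cauchy--Schwarz argument above, leveraging the cheap pointwise bound $|g_{ij}|\le h^{1/2}$ to turn fourth moments into second moments whose values are given exactly by Lemma~\ref{lem:fij2}. The rest of the proof is bookkeeping to match the piecewise definition of $\eta$.
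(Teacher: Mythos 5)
Your proof is correct, and it shares the paper's overall skeleton: expand $\det(G)$ by the Leibniz formula into $d!$ terms, evaluate the diagonal term exactly as $\mu^d$ via Lemma~\ref{lem:G_diag}, and bound the expectation of each non-identity term, classified by how many diagonal entries it contains. The differences are in the per-term estimates. For a transposition term $g_{ii}g_{jk}g_{kj}$, the paper does not factor by independence; it bounds $|\E[f_{jk}f_{kj}\,g_{ii}]| \le \E[|f_{jk}f_{kj}|]\cdot\max|g_{ii}| \le h^{1/2}+1$, combining Lemma~\ref{lem:fijfkl} with the sup bound of Lemma~\ref{lem:maxf}. Your independence factorization gives the slightly sharper bound $\mu$ per transposition; one fine point is that Lemma~\ref{lem:independence} as stated is only pairwise, whereas you need $g_{ii}$ jointly independent of the pair $(g_{jk},g_{kj})$ --- this does hold, but it comes from the fact recorded in that lemma's proof (that $f_{ij}$ depends only on columns $i,j$ of $B$, so $f_{ii}$ and the pair live on disjoint, independent sets of columns), not from the lemma's statement itself. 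For the $3$-cycles the paper again just peels off one factor by the sup bound, $|\E[f_{ij}f_{jk}f_{ki}]| \le \E[|f_{ij}f_{jk}|]\cdot\max|f_{ki}| \le 1\cdot h^{1/2}$, which reaches in one line the same bound $h^{1/2}$ that your double Cauchy--Schwarz with the pointwise fourth-moment trick ($\E[g_{ij}^4]\le h\,\E[g_{ij}^2]=h$) produces. As a result the paper's contributions sum to exactly $3(h^{1/2}+1)+2h^{1/2} = 5h^{1/2}+3 = \eta$ with no slack, while your total $3\mu+2h^{1/2}$ is strictly smaller and requires the closing numerical step $3\sqrt{2/\pi}+2<5$ from Lemma~\ref{lemma:sigma_asymptotics}. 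Both routes are sound: yours is marginally sharper on the transpositions, the paper's is more economical on the $3$-cycles.
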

\begin{proof}
We use the probabilistic construction and notation of
\S\ref{sec:construction}.  
Let $A$ be a Hadamard matrix of order~$h$. Define matrices $B, C, D, F$ and
$G$ as in \S\ref{sec:construction}.

For notational convenience we give the proof for the case $d=3$.  The cases
$d \in \{1,2\}$ are similar (but easier).

Since $G = F + I$, we have $g_{ii} = f_{ii}+1$ and
\[G = \left[
\begin{array}{ccc}
g_{11} & f_{12} & f_{13}\\
f_{21} & g_{22} & f_{23}\\
f_{31} & f_{32} & g_{33}\\
\end{array}
\right]\,.
\]
Expanding $\deg(G)$ we obtain $d! = 6$ terms. The
``diagonal'' term is $g_{11}g_{22}g_{33}$. There are
$3$ terms involving one diagonal element, for example $-f_{12}f_{21}g_{33}$,
and $2$ terms involving no diagonal elements, for example
$f_{12}f_{23}f_{31}$. Define the \emph{type} of a term to be the number
of diagonal elements that it contains. Thus the diagonal term has type $3$
(or type $d$ in general). Let $T_k$ be an upper bound on the
magnitude of the expectation of
a term of type $k$.  Then
\begin{equation}	\label{eq:EdetG3}
E[\det(G)] \ge E[g_{11}g_{22}g_{33}] - 3T_1 - 2T_0.
\end{equation}
Now, by Lemmas~\ref{lem:maxf} and~\ref{lem:fijfkl},
\[|E[f_{12}f_{23}f_{31}]| \le E[|f_{12}f_{23}|]\cdot\max|f_{31}|
  \le 1\cdot h^{1/2} = h^{1/2},
\]
so we can take $T_0 = h^{1/2}$.  Similarly,
\[|E[f_{12}f_{21}g_{33}]| \le E[|f_{12}f_{23}|]\cdot\max|g_{31}|
\le h^{1/2}+1,
\]
so we can take $T_1 = h^{1/2}+1$.
Also, from Lemma~\ref{lem:G_diag} and the definition of $\mu(h)$, we have
\[E[g_{11}g_{22}g_{33}] = \mu^3.\]  Thus, from~\eqref{eq:EdetG3}, we obtain
\[E[\det(G)] \ge \mu^3 - 3(h^{1/2}+1) - 2h^{1/2} = \mu^3 - 5h^{1/2} - 3.\]
We have shown that, with $\eta = \eta(h,d)$ as in the statement of the Lemma,
\begin{equation}	\label{eq:eta_bound}
E[\det(G)] \ge \mu^d - \eta
\end{equation}
holds for $d = 3$. The proofs for $1 \le d \le 2$ are similar but simpler.

{From}~\eqref{eq:eta_bound},
there exists some assignment of signs to the elements of $B$
such that, for the resulting matrix $G$, we have
\begin{equation}	\label{eq:G_eta_bound}
\det(G) \ge \mu^d - \eta.
\end{equation}
Hence, by the Schur complement lemma (Lemma~\ref{lemma:Schur}),
\[D(n) \ge h^{h/2}\det(G) \ge h^{h/2}(\mu^d - \eta).\]
\end{proof}
\begin{remark}
{\rm
The restriction $d \le 3$ in Lemma~\ref{lemma:expand_det}
is not necessary.  In the general case,
a similar argument, given in~\cite[pg.~$13$]{rpb253}, 
shows that
\[D(n) \ge h^{h/2}(\mu(h)^d - \eta(h,d)),\]
where 
\[\eta(h,d) \le (d!-1)(h^{1/2}+1)^{d-2}\;\text{ for }\; d \ge 2.\]
It follows from Lemma~\ref{lemma:sigma_asymptotics} that
$\eta(h,d)/\mu(h)^d = O(d!(\pi/2)^{d/2}/h)$.
Because of the factor $d!(\pi/2)^{d/2}$ in this bound,
the general result is useless unless $d$ is small, say $d \le 3$ or
$d \le 4$.
Theorems~\ref{thm:lower_bd_via_Chebyshev}--\ref{thm:two_parameter_bound} 
overcome this difficulty by using
Lemma~\ref{lemma:super_duper_perturbation_bound} or 
Lemma~\ref{lemma:optimal_pert_bound},
avoiding the expansion of $\det(G)$ as a sum of $d!$ terms.
}
\end{remark}

We now deduce Theorem~\ref{thm:small_d} from Lemma~\ref{lemma:expand_det}.
For $d > 3$ a similar result holds, but we can 
only prove it for $n$ sufficiently large~--
see Theorem~\ref{thm:almost_all_n}
and also the weaker result of~\cite[Corollary~$2$]{rpb253}.

\begin{theorem}	\label{thm:small_d}
If $0\le d\le 3$, $h \in {\Had}$, $h \ge 4$, and $n = h+d$, then
\[
\Dbar(n) \ge \left(\frac{2}{\pi e}\right)^{d/2}.
\]
Moreover, the inequality is strict if $d > 0$.
\end{theorem}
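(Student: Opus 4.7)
The strategy is to derive Theorem~\ref{thm:small_d} directly from Lemma~\ref{lemma:expand_det} by dividing through by $n^{n/2}$ and carefully bounding each resulting factor via Lemma~\ref{lemma:sigma_asymptotics} (quantitative estimates for $\mu(h)$) and Lemma~\ref{lemma:uncond2} (the inequality relating $(h/n)^n$ to $e^{-d}$). The identity
\[
\frac{h^{h/2}}{n^{n/2}} \;=\; \frac{(h/n)^{n/2}}{h^{d/2}}
\]
lets us rewrite Lemma~\ref{lemma:expand_det} in the shape
\[
\Dbar(n) \;\ge\; \left(\frac{h}{n}\right)^{n/2} \left(\frac{\mu^2}{h}\right)^{d/2}\!\left(1 - \frac{\eta}{\mu^d}\right),
\]
and the goal becomes showing that the right-hand side exceeds $(2/(\pi e))^{d/2}$ for $1 \le d \le 3$ and $h \ge 4$.

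First I would dispose of the trivial case $d=0$: then $n = h \in \Had$, so $\Dbar(n) = 1$, matching the bound (without strict inequality, in agreement with the theorem's ``strict if $d>0$'' proviso). For $d \in \{1,2,3\}$, I would plug in the three ingredients in sequence. From Lemma~\ref{lemma:uncond2}, $(h/n)^{n/2} > \exp(-d/2 - d^2/(2h))$. From~\eqref{eq:ineq_mu1} of Lemma~\ref{lemma:sigma_asymptotics}, $\mu > \sqrt{2h/\pi} + 0.9$, which in particular yields
\[
\frac{\mu^2}{h} \;>\; \frac{2}{\pi} + \frac{1.8}{\sqrt{h}}\sqrt{\frac{2}{\pi}} + \frac{0.81}{h} \;>\; \frac{2}{\pi}
\]
and in fact gives a factor $(\mu^2/h)^{d/2} \ge (2/\pi)^{d/2}\bigl(1 + c\, d/\sqrt{h}\bigr)$ for an explicit constant $c>0$. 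For the correction $1 - \eta/\mu^d$, the explicit values $\eta = 0, 1, 5h^{1/2}+3$ for $d = 1,2,3$ combine with $\mu^d \ge (2h/\pi)^{d/2}$ to give $\eta/\mu^d = O(1/h)$ (in fact $\eta/\mu^d \to 0$ uniformly as $h\to\infty$).

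Multiplying these bounds together, the target inequality reduces to
\[
\bigl(1 + O_d(h^{-1/2})\bigr) \cdot \exp\bigl(-d^2/(2h)\bigr) \cdot \bigl(1 - O_d(h^{-1})\bigr) \;\ge\; 1,
\]
where the implied constants are absolute for $d \le 3$. Since $d^2/(2h) \le 9/8$ is bounded and the leading positive $\Theta(d/\sqrt{h})$ term from $(\mu^2/h)^{d/2}$ eventually dominates the loss $e^{-d^2/(2h)} = 1 - O(d^2/h)$ for $h$ large, the inequality is clear asymptotically, and strictness propagates from the strictness already present in Lemma~\ref{lemma:uncond2} and in~\eqref{eq:ineq_mu1}. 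The main obstacle is verifying the inequality uniformly for the smallest Hadamard orders $h \in \{4,8,12,\dots\}$ at $d=3$, where the asymptotic expansions are weakest and the $5h^{1/2}+3$ contribution to $\eta$ is proportionally largest; I would handle this by a direct numerical check at a few small $h$ and a clean asymptotic estimate for all remaining $h$, using the exact formulas in Lemma~\ref{lemma:variance} to evaluate $\mu(h)$ at the small values. Finally, the explicit lower bound $(2/(\pi e))^{3/2} > 1/9$ quoted in the introduction follows from $\pi e < 9$.
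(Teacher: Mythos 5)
Your overall route coincides with the paper's: start from Lemma~\ref{lemma:expand_det}, feed in $\mu > \sqrt{2h/\pi}+0.9$ from Lemma~\ref{lemma:sigma_asymptotics} and $(h/n)^n > e^{-d-d^2/h}$ from Lemma~\ref{lemma:uncond2}, and observe that the positive $\Theta(dh^{-1/2})$ gain coming from the $0.9$ in the bound on $\mu$ dominates the $O_d(h^{-1})$ losses. This is exactly how the paper handles $d\in\{1,2\}$ for all $h\ge 4$, and $d=3$ once $h\ge 28$.

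However, your plan for the leftover small-$h$ cases at $d=3$ has a genuine gap. You propose to close them by ``a direct numerical check at a few small $h$'' using the exact value of $\mu(h)$ from Lemma~\ref{lemma:variance}. That check does succeed for $h\in\{8,12,16,20,24\}$, but it \emph{fails} at $h=4$, i.e.\ $n=7$ (and $h=4$ is forced there, since the next Hadamard order is $8>7$). Indeed $\mu(4) = 1 + \tfrac{4}{2^4}\binom{4}{2} = 2.5$ and $\eta(4,3) = 5\cdot 4^{1/2}+3 = 13$, so Lemma~\ref{lemma:expand_det} only yields $D(7) \ge 4^2\,(2.5^3 - 13) = 42$, hence $\Dbar(7) \ge 42/7^{7/2} \approx 0.046$, far below the target $(2/(\pi e))^{3/2} \approx 0.113$. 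No amount of numerical care can rescue this: the probabilistic bound itself is too weak at $h=4$, not merely the asymptotic estimates for $\mu$. This is precisely why the paper switches to a different tool for all $n = 4k-1 \le 27$, namely Sharpe's minor argument, which gives $D(4k-1) \ge D(4k)/(4k) = (4k)^{2k-1}$ and hence $\Dbar(7) \ge 512/7^{7/2} \approx 0.56$. Your proof needs some such supplementary argument (Sharpe's bound, or simply the known exact value $D(7)=576$) for the case $n=7$; the toolkit you restrict yourself to cannot complete it.
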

\begin{proof}
Define $c := \sqrt{2/\pi}$ and $K := 0.9/c$.
The result is trivial if $d=0$, so assume that $1 \le d \le 3$.
Since $d$ is bounded, we can ignore functions of $d$ multiplying the
``$O$'' terms.
Lemma~\ref{lemma:expand_det} gives
\[D(n) \ge h^{h/2}(\mu^d - \eta),\] 
and from
Lemma~\ref{lemma:sigma_asymptotics} we have
$\mu \ge ch^{1/2} + 0.9$,
so $\mu^d \ge c^dh^{d/2}(1 + dKh^{-1/2})$. 
Thus
\[D(n) \ge c^d h^{n/2}\left(1 + dKh^{-1/2} - \frac{\eta}{c^dh^{d/2}}\right).\]
{From} Lemma~\ref{lemma:uncond2}, $(h/n)^{n} \ge \exp(-d - d^2/h)$, so
\[\Dbar(n) = \frac{D(n)}{n^{n/2}}
 \ge c^d e^{-d/2}\left(1 + dKh^{-1/2} - \frac{\eta}{c^dh^{d/2}}\right)
	e^{-d^2/(2h)}.\]
Since
$c^d e^{-d/2} = ({2}/{(\pi e)})^{d/2}$,
$K$ is positive, and $\eta/h^{d/2} = O(h^{-1})$,
the term $dKh^{-1/2}$ dominates the $O(h^{-1})$ terms, and
the result follows for all sufficiently large~$h$.
In fact, some computation shows that this
argument is sufficient for $d \in \{1,2\}$ and all $h \ge 4$.
For $d=3$ we obtain
\begin{eqnarray*}
\Dbar(n)
	 &\ge& \left(\frac{2}{\pi e}\right)^{3/2}
	\left(1 + \frac{3K}{h^{1/2}} - \frac{5h^{1/2}+3}{c^3 h^{3/2}}\right)
	e^{-4.5/h}.
\end{eqnarray*}
This shows that $\Dbar(n) \ge (2/(\pi e))^{3/2}$ for $h \ge 28$.
Thus, we only have to consider the cases
$n \in \{7,11,15,19,23,27\}$.  Now for $n = 4k-1$, where $4k \in \Had$,
an easy argument of Sharpe~\cite{Sharpe} involving minors of a 
Hadamard matrix of order $4k$, as in~\cite[Theorem~$2$]{KMS00}, shows that
\[D(4k-1) \ge \frac{D(4k)}{4k} = (4k)^{2k-1},\]
so
\[\Dbar(4k-1) \ge (4k)^{2k-1}/(4k-1)^{(4k-1)/2}.\]
This is sufficient to show that 
$\Dbar(n) > (2/(\pi e))^{3/2}$ for $n = 4k-1 \le 27$.
\end{proof}
\begin{corollary}	\label{cor:assume_H}
The Hadamard conjecture implies that $\Dbar(n)$ is bounded below
by a positive constant.
\end{corollary}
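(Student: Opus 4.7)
The plan is to observe that the Hadamard conjecture makes the ``gap'' parameter $d$ appearing in Theorem~\ref{thm:small_d} uniformly bounded, and then to read off the corollary from that theorem.

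First I would recall that $1$ and $2$ are Hadamard orders, and that (by Hadamard's theorem) every Hadamard order $\ge 4$ is divisible by~$4$. Under the Hadamard conjecture, the set $\Had$ contains $\{1,2\}\cup 4\Z_{>0}$, so consecutive elements of $\Had$ differ by at most $4$. Consequently, for every $n\ge 4$ the maximal Hadamard order $h\le n$ satisfies $d=n-h\in\{0,1,2,3\}$, and Theorem~\ref{thm:small_d} applies to give
\[
\Dbar(n) \;\ge\; \left(\frac{2}{\pi e}\right)^{d/2} \;\ge\; \left(\frac{2}{\pi e}\right)^{3/2}\,.
\]
Since $(2/(\pi e))^{3/2}$ is a positive absolute constant, this settles all $n\ge 4$.

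Next I would dispose of the small cases $n\in\{1,2,3\}$ not covered by Theorem~\ref{thm:small_d} (which requires $h\ge 4$) by direct computation: $\Dbar(1)=\Dbar(2)=1$ and $\Dbar(3)=4/3^{3/2}$, all of which comfortably exceed $(2/(\pi e))^{3/2}$. Taking
\[
\kappa \;:=\; \min\!\left\{\,\Dbar(1),\,\Dbar(2),\,\Dbar(3),\,(2/(\pi e))^{3/2}\,\right\} \;=\; (2/(\pi e))^{3/2} \;>\; 0
\]
then gives the desired uniform positive lower bound $\Dbar(n)\ge\kappa$ for all $n\ge 1$.

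There is no real obstacle here: the work is entirely in Theorem~\ref{thm:small_d}, and the role of the Hadamard conjecture in this corollary is simply to guarantee that the case $d\le 3$ covers every~$n$. The only thing to be a little careful about is verifying that the small cases $n\le 3$ are handled, and noting that the constant we obtain, $(\pi e/2)^{-3/2}\approx 0.134$, is explicit.
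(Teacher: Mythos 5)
Your proposal is correct and takes essentially the same route as the paper: under the Hadamard conjecture one has $d\le 3$ for every $n$, and the bound $\Dbar(n)\ge(2/(\pi e))^{d/2}\ge(2/(\pi e))^{3/2}$ follows directly from Theorem~\ref{thm:small_d}. Your only addition is the explicit check of the small cases $n\in\{1,2,3\}$, which the paper dismisses in its Notation section as easy to verify.
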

\begin{proof}
If the Hadamard conjecture is true, then for 
$4 < n \not\equiv 0 \pmod 4$, we can take
$h = 4\lfloor n/4\rfloor$ and $d = n-h \le 3$ in
Theorem~$\ref{thm:small_d}$. This gives
\[1 > \Dbar(n) > 
 \left(\frac{2}{\pi e}\right)^{d/2} \!\!\ge\;
 \left(\frac{2}{\pi e}\right)^{3/2} \!>\; 0.1133\,.\]
\end{proof}

\begin{remark}
{\rm 
It is interesting to compare our Theorem~$\ref{thm:small_d}$ 
(or the slightly sharper Lemma~\ref{lemma:expand_det}) with
Theorem~$2$ of Koukouvinos, Mitrouli and Seberry \cite{KMS00},
assuming the existence of the relevant Hadamard matrices.
In the case $n \equiv 2 \bmod 4$, the bound given
by our Theorem~$\ref{thm:small_d}$ 
(respectively Lemma~\ref{lemma:expand_det})
is better for $n \ge 22$ (resp.\ $14$) than the bound
$2(n+2)^{(n-2)/2}/n^{n/2} \sim 2e/n$ implied by~\cite[Theorem~$2$]{KMS00}.
In the case $n \equiv 3 \bmod 4$, the bound given by our
Theorem~$\ref{thm:small_d}$
(resp.\ Lemma~\ref{lemma:expand_det})
is better for $n \ge 211$ (resp.\ $135$)
than the bound
$(n+1)^{(n-1)/2}/n^{n/2} \sim (e/n)^{1/2}$ 
implied by~\cite[Theorem~$2$]{KMS00}.
}
\end{remark}

We now prove several theorems which apply for arbitrarily
large~$d$. The proofs depend on the fact that
$\sigma(h)$ is bounded (see Lemma~\ref{lemma:variance}).
This enables us to use Chebyshev's inequality (or Cantelli's inequality).

Theorems~\ref{thm:lower_bd_via_Chebyshev}--\ref{thm:two_parameter_bound}
give lower bounds on $\det(G)/\mu^d$; these are easily translated
into lower bounds on $D(n)$, since
$D(n) \ge h^{h/2}\det(G)$ (by the Schur complement lemma),
and $\mu > \sqrt{2h/\pi} + 0.9$ (by Lemma~\ref{lemma:sigma_asymptotics}).
Each of the
Theorems~\ref{thm:lower_bd_via_Chebyshev}--\ref{thm:two_parameter_bound}
is followed by a corollary which gives a
corresponding lower bound on $\Dbar(n)$.

\begin{theorem}		\label{thm:lower_bd_via_Chebyshev}
Suppose $d \ge 1$, $4 \le h \in {\cal H}$, $n = h+d$, $G$ as
in~$\S\ref{subsec:properties}$. Then, with positive probability 
\begin{equation}
\frac {\det G}{\mu^d} \ge 1 - \frac{d^2}{\mu}\,.	\label{eq:fudge1}
\end{equation}
\end{theorem}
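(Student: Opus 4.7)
The plan is to realise $G$ as a small perturbation of the scaled identity $\mu I$ and then invoke Lemma~\ref{lemma:optimal_pert_bound}. Write $G = \mu(I-E)$, where the $d\times d$ matrix $E$ has entries $e_{ii}=1-g_{ii}/\mu$ and $e_{ij}=-g_{ij}/\mu$ for $i\ne j$. The main effort is to exhibit an event of positive probability on which $|e_{ij}|\le d/\mu$ for every pair $(i,j)$; on such an event, assuming $d^2/\mu\le 1$ (the only range in which \eqref{eq:fudge1} is non-vacuous, since otherwise the right-hand side is $\le 0$), Lemma~\ref{lemma:optimal_pert_bound} applied with $\varepsilon = d/\mu$ gives $\det(I-E)\ge 1-d(d/\mu)=1-d^2/\mu$, and multiplication by $\mu^d$ yields \eqref{eq:fudge1}.

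For $1\le i\le d$ call $\{|g_{ii}-\mu|\ge d\}$ a \emph{diagonal bad event}, and for $i\ne j$ call $\{|g_{ij}|\ge d\}$ an \emph{off-diagonal bad event}. By Lemma~\ref{lemma:variance}, the random variable $g_{ii}$ has mean $\mu$ and variance $\sigma^2$, while each $g_{ij}$ with $i\ne j$ has mean $0$ and variance $1$, so Chebyshev's inequality (Proposition~\ref{prop:Chebyshev}) yields
\[
\Prob\bigl[\text{diagonal bad at }i\bigr]\le \sigma^2/d^2,\qquad
\Prob\bigl[\text{off-diagonal bad at }(i,j)\bigr]\le 1/d^2.
\]
Summing the bounds over the $d$ diagonal and $d(d-1)$ off-diagonal bad events gives
\[
\Prob\bigl[\text{some bad event occurs}\bigr]\le \frac{\sigma^2}{d}+\frac{d-1}{d}=1-\frac{1-\sigma^2}{d}.
\]

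The proof now closes on a variance check. By Lemma~\ref{lemma:sigma_asymptotics}, $\sigma(h)^2\le\sigma(4)^2=1/4<1$ uniformly in $h$, so the right-hand side above is strictly less than $1$. Hence with positive probability \emph{none} of the bad events occurs; on that event $|e_{ij}|\le d/\mu$ for every $(i,j)$, and the application of Lemma~\ref{lemma:optimal_pert_bound} already described finishes the argument. The only delicate point in this plan, and really its whole content, is choosing the Chebyshev threshold $\lambda=d$ so that it simultaneously matches the perturbation bound's hypothesis $d\varepsilon\le 1$ and leaves a union bound strictly below $1$; this balance works solely because $\sigma^2$ is bounded away from $1$ uniformly in $h$, and would fail for any random matrix whose diagonal entries had variance $\ge 1$.
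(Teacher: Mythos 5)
Your proof is correct and takes essentially the same route as the paper's: Chebyshev's inequality applied entrywise with threshold $d$, a union bound over the $d$ diagonal and $d(d-1)$ off-diagonal entries that stays strictly below $1$ precisely because $\sigma^2 \le 1/4 < 1$, and then Lemma~\ref{lemma:optimal_pert_bound} applied to $\mu^{-1}G$ with $\varepsilon = d/\mu$. The only cosmetic differences are that the paper carries a free threshold $\lambda$ and sets $\lambda = d$ at the end (observing in Remark~\ref{remark:improved_simple} that $\lambda = \sqrt{d(d+\sigma^2-1)}$ is marginally sharper), and that the restriction $d^2 \le \mu$ you state explicitly is present only implicitly in the paper's definition of a ``good''~$G$.
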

\begin{proof}
Let $\lambda$ be a positive parameter to be chosen later, and $\mu = \mu(h)$.
For the purposes of this proof, 
we say that $G$ is \emph{good} if the conditions of
Lemma~\ref{lemma:optimal_pert_bound}
apply with $M = \mu^{-1}G$ and $\ve = \lambda/\mu$.
Otherwise $G$ is \emph{bad}.

Assume $1 \le i, j \le d$. 
{From} Lemma~\ref{lemma:variance}, $V[g_{ij}] = 1$ for $i \ne j$;
from Lemma~\ref{lemma:sigma_asymptotics},
$V[g_{ii}] = \sigma(h)^2 \le 1/4$.
It follows from Chebyshev's
inequality (Proposition~\ref{prop:Chebyshev}) that
\[\Prob[|g_{ij}| \ge \lambda] \le \frac{1}{\lambda^2}
\;\text{ for }\; i\ne j,\]
and
\[\Prob[|g_{ii} - \mu| \ge \lambda] \le
 \frac{\sigma^2}{\lambda^2}\,\raisedot\]
Thus,
\[\Prob[G \text{ is bad}] \le \frac{d(d-1)}{\lambda^2}
	+ \frac{d\sigma^2}{\lambda^2}
	= \frac{d(d+\sigma^2-1)}{\lambda^2}
                           < \frac{d^2}{\lambda^2}
\,\raisedot
\]

Taking $\lambda = d$ gives
$\Prob[G \text{ is bad}] < 1$, 
so $\Prob[G \text{ is good}]$ is positive.
Whenever $G$ is good we can apply Lemma~\ref{lemma:optimal_pert_bound}
to $\mu^{-1}G$, obtaining
$\det(\mu^{-1}G) \ge 1 - d\ve = 1 - d\lambda/\mu = 1 - d^2/\mu$.
\end{proof}
\begin{remark}		\label{remark:improved_simple}
{\rm
With the optimal choice $\lambda = \sqrt{d(d+\sigma^2-1)}$ we obtain the
less elegant but slightly sharper result that, with positive probability,
\[\frac{\det(G)}{\mu^d} \ge 
	1 - \frac{\sqrt{d^3(d+\sigma^2-1)}}{\mu}\,\raisedot\]
}
\end{remark}

\begin{corollary} \label{cor:lower_bd_via_Chebyshev}
Under the conditions of Theorem~$\ref{thm:lower_bd_via_Chebyshev}$,
\begin{equation}
{\cal R}(n) \ge \left(\frac{2}{\pi e}\right)^{d/2}
	\left(1 - d^2\sqrt{\frac{\pi}{2h}}\right)\,. \label{eq:fudge2}
\end{equation}
\end{corollary}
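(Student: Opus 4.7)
The plan is to chain together Theorem~\ref{thm:lower_bd_via_Chebyshev} with the Schur complement lemma, the $\mu$-asymptotics from Lemma~\ref{lemma:sigma_asymptotics}, and the $(h/n)^n$ inequality from Lemma~\ref{lemma:uncond2}, and then to close a small gap between the resulting expression and the target bound by a short estimate. Conceptually the corollary is an immediate consequence of the theorem; essentially all of the work is in converting the bound on $\det(G)/\mu^d$ into a bound on $\Dbar(n)$.

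First I would observe that if $1 - d^2\sqrt{\pi/(2h)} \le 0$ the corollary is vacuous, since $\Dbar(n) > 0$ always, so I may assume the right-hand side is positive, which in turn forces $1 - d^2/\mu > 0$ (because $1/\mu < \sqrt{\pi/(2h)}$). By Theorem~\ref{thm:lower_bd_via_Chebyshev}, there is positive probability that $\det(G) \ge \mu^d(1 - d^2/\mu)$, hence some deterministic realisation of $B$ produces this inequality. Combined with the probabilistic construction of \S\ref{sec:construction} and Lemma~\ref{lemma:Schur}, this yields an $n \times n$ $\{\pm 1\}$-matrix $\widetilde{A}$ with $|\det(\widetilde{A})| \ge h^{h/2}\mu^d(1 - d^2/\mu)$, so $D(n) \ge h^{h/2}\mu^d(1 - d^2/\mu)$. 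Using $\mu \ge \sqrt{2h/\pi}$ (a weakening of \eqref{eq:ineq_mu1}) to bound $\mu^d \ge (2h/\pi)^{d/2}$, then dividing by $n^{n/2}$ and invoking Lemma~\ref{lemma:uncond2}, gives
\[
\Dbar(n) \ge \left(\frac{2}{\pi e}\right)^{d/2}\exp\!\left(-\frac{d^2}{2h}\right)\left(1 - \frac{d^2}{\mu}\right).
\]

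The only real step is then to verify
\[
\exp\!\left(-\frac{d^2}{2h}\right)\left(1 - \frac{d^2}{\mu}\right) \ge 1 - d^2\sqrt{\frac{\pi}{2h}}.
\]
Using $e^{-x} \ge 1 - x$ and expanding, the left-hand side is at least $1 - d^2/\mu - d^2/(2h)$, so it suffices to show that $1/\mu + 1/(2h) \le \sqrt{\pi/(2h)}$. This is precisely where the naive estimate $\mu \ge \sqrt{2h/\pi}$ would fail: it accounts for the $1/\mu$ term but leaves nothing to absorb $1/(2h)$. I would instead use the sharper bound $\mu > \sqrt{2h/\pi} + 0.9$ from \eqref{eq:ineq_mu1}, which yields
\[
\sqrt{\frac{\pi}{2h}} - \frac{1}{\mu} \;>\; \frac{0.9\pi/(2h)}{1 + 0.9\sqrt{\pi/(2h)}},
\]
and a short numerical check (the denominator is bounded by a constant less than $1.57$ for $h \ge 4$) shows this exceeds $1/(2h)$ whenever $h \ge 4$.

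The main obstacle is thus the modest bookkeeping in the last inequality: one must ensure that the $0.9$ correction in the lower bound on $\mu$ is large enough to absorb both the $e^{-d^2/(2h)}$ factor arising from Lemma~\ref{lemma:uncond2} and the loss from replacing $\mu$ by its leading asymptotic term $\sqrt{2h/\pi}$. Once this is verified for $h \ge 4$, the chain of inequalities closes and the stated bound on $\Dbar(n)$ follows.
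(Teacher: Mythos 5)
Your proposal is correct and follows essentially the same route as the paper: Theorem~\ref{thm:lower_bd_via_Chebyshev} plus the Schur complement lemma, the bound $(h/n)^n > \exp(-d-d^2/h)$ from Lemma~\ref{lemma:uncond2}, reduction to showing $\exp(-d^2/(2h))(1-d^2/\mu) \ge 1 - d^2\sqrt{\pi/(2h)}$, and then exploiting the additive constant $0.9$ in the bound $\mu > \sqrt{2h/\pi}+0.9$ of Lemma~\ref{lemma:sigma_asymptotics} to absorb the $d^2/(2h)$ loss. The only (harmless) difference is in the final bookkeeping: you discard the positive cross term $d^4/(2h\mu)$ and verify the $d$-free inequality $1/\mu + 1/(2h) \le \sqrt{\pi/(2h)}$ directly, whereas the paper keeps that term and closes the argument using $d^2 \ge 1$ together with the upper bound $\mu \le \sqrt{2h/\pi}+1$.
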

\begin{proof}
Write $c := \sqrt{2/\pi}$.  
We can assume that $d^2 < ch^{1/2}$, for there is nothing to prove unless
the right side of~\eqref{eq:fudge2} is positive.
{From} Lemma~\ref{lemma:sigma_asymptotics},
$ch^{1/2} < \mu$, so $d^2 < \mu$.

{From} Theorem~\ref{thm:lower_bd_via_Chebyshev} and the Schur complement
Lemma,
\[\Dbar(n) \ge \frac{h^{h/2} \mu^d}{n^{n/2}} \left(1 -
\frac{d^2}{\mu}\right)\,\raisedot\]
Using  $ch^{1/2} < \mu$, this gives
\[\Dbar(n) \ge c^d (h/n)^{n/2}(1 - d^2/\mu).\]
By Lemma~\ref{lemma:uncond2}, $(h/n)^n > \exp(-d-d^2/h)$, so
\begin{equation}	\label{eq:C1bd1}
\Dbar(n) \ge c^d e^{-d/2} f
 = \left(\frac{2}{\pi e}\right)^{d/2}f,
\end{equation}
where 
\begin{equation}	\label{eq:C1f}
f = \exp\left(-\,\frac{d^2}{2h}\right) \left(1 - \frac{d^2}{\mu}\right).
\end{equation}
Thus, to prove~\eqref{eq:fudge2}, it suffices to prove that
\[f \ge 1 - \frac{d^2}{ch^{1/2}}\,\raisedot\]
Since
\[\exp\left(-\,\frac{d^2}{2h}\right) \ge 1 - \frac{d^2}{2h}\,\raisecomma\]
it suffices to prove that
\begin{equation}	\label{eq:ineq_dhmu}
 \left(1 - \frac{d^2}{2h}\right)
 \left(1 - \frac{d^2}{\mu}\right)
 \ge 1 - \frac{d^2}{ch^{1/2}}\,\raisedot
\end{equation}
Expanding and simplifying shows that the inequality~\eqref{eq:ineq_dhmu}
is equivalent to
\begin{equation}	\label{eq:ineq_dhmu2}
2h + \mu \le d^2 + \mu\sqrt{2\pi h}.
\end{equation}
Now, by Lemma~\ref{lemma:sigma_asymptotics}, 
$\mu > c\sqrt{h} + 0.9$, so
$\mu\sqrt{2\pi h} > 2h + 0.9\sqrt{2\pi h}$
(using  $c\sqrt{2\pi} = 2$).
Thus, to prove~\eqref{eq:ineq_dhmu2}, it suffices to show that
$\mu \le d^2 + 0.9\sqrt{2\pi h}$. 
Using Lemma~\ref{lemma:sigma_asymptotics} again, we have
$\mu \le ch^{1/2} + 1$, so it suffices to show that
\[ch^{1/2} + 1 \le 0.9\sqrt{2\pi h} + d^2.\]
This follows from $c \le 0.9\sqrt{2\pi}$ and $1 \le d^2$,
so the proof is complete.
\end{proof}

\begin{remark}	\label{remark:lower_bd_via_Chebyshev}
{\rm
Corollary~\ref{cor:lower_bd_via_Chebyshev} gives a nontrivial lower bound
on ${\cal R}(n)$ iff the second factor in the bound is positive, 
i.e.~iff $h > {\pi}d^4/2$.
By Livinskyi's results~\cite{Livinskyi}, this condition 
holds for all sufficiently large $n$ 
(assuming as always that we choose the maximal $h$ for given $n$).
{From} Theorem~\ref{thm:small_d}, 
the second factor in~\eqref{eq:fudge2} can be
omitted if $d \le 3$.  
}
\end{remark}
We now improve on Theorem~\ref{thm:lower_bd_via_Chebyshev}, if 
$d$ is sufficiently large, by using 
the Lov\'asz Local Lemma~\cite{EL} (Proposition~\ref{prop:Lovasz}).

\begin{theorem}		\label{thm:Chebyshev_Lovasz}
Suppose $d \ge 1$, $4 \le h \in {\cal H}$, $n = h+d$, $G$ as
in~$\S\ref{subsec:properties}$.
Then with positive probability
\[\frac{\det G}{\mu^d} \ge 1 - \frac{2d\sqrt{(d-1)e}}{\mu}\,\raisedot\]
\end{theorem}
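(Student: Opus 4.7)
The plan is to upgrade Theorem \ref{thm:lower_bd_via_Chebyshev} by replacing its union-bound step with the symmetric Lovász Local Lemma (Proposition \ref{prop:Lovasz}), exploiting the fact that the entries of $G$ have a sparse dependency structure. For a parameter $\lambda > 0$ to be optimized, define bad events $E_{ij}$ for $1 \le i, j \le d$ exactly as in the previous theorem: $E_{ij}$ is the event $|g_{ij}| \ge \lambda$ when $i \ne j$, and $E_{ii}$ is the event $|g_{ii} - \mu| \ge \lambda$. By Lemma \ref{lemma:variance} and Lemma \ref{lemma:sigma_asymptotics}, $\V[g_{ij}] \le 1$ uniformly, so Chebyshev's inequality gives the common upper bound $p := \Prob[E_{ij}] \le 1/\lambda^2$. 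As in the previous proof, if none of the $E_{ij}$ occurs then Lemma \ref{lemma:optimal_pert_bound} applied to $M = \mu^{-1} G$ with $\ve = \lambda/\mu$ yields $\det(G)/\mu^d \ge 1 - d\lambda/\mu$.

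The key new ingredient is the dependency count. Lemma \ref{lem:independence} asserts that $E_{ij}$ and $E_{k\ell}$ are independent whenever $\{i,j\} \cap \{k,\ell\} = \emptyset$, since $g_{ij}$ depends only on columns $i$ and $j$ of $B$. Fixing any off-diagonal pair $(i,j)$, the number of ordered pairs $(k,\ell) \in \{1,\dots,d\}^2$ whose support meets $\{i,j\}$ is $d^2 - (d-2)^2 = 4d - 4$; after removing the self-pair, $E_{ij}$ has at most $D \le 4d - 5$ dependent neighbors. The diagonal events $E_{ii}$ have strictly fewer neighbors ($2d-2$), so the uniform bound $D + 1 \le 4(d-1)$ holds for every event.

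The LLL sufficient condition $ep(D+1) \le 1$ then reads $4e(d-1)/\lambda^2 \le 1$, met with equality by the choice $\lambda = 2\sqrt{(d-1)e}$. With this $\lambda$, Proposition \ref{prop:Lovasz} guarantees positive probability that no bad event occurs, and the perturbation bound from the first paragraph gives $\det(G)/\mu^d \ge 1 - d\lambda/\mu = 1 - 2d\sqrt{(d-1)e}/\mu$, as claimed. The degenerate case $d = 1$ must be noted separately but is trivial, since the right-hand side is then $1$ and $\E[g_{11}] = \mu$ forces $\Prob[g_{11} \ge \mu] > 0$. The main (and only nontrivial) step is the dependency count: the $d^2$ bad events have neighborhoods of size only $O(d)$, which is what allows LLL to save a factor of roughly $\sqrt{d}$ in $\lambda$ compared with the union bound used in Theorem \ref{thm:lower_bd_via_Chebyshev}, thereby replacing the $d^2/\mu$ correction by $2d\sqrt{(d-1)e}/\mu$.
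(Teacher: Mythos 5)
Your proposal is correct and follows essentially the same route as the paper's own proof: Chebyshev bounds on the entry deviations with $p \le 1/\lambda^2$, the symmetric Lov\'asz Local Lemma with dependency degree $D = 4d-5$ (via Lemma~\ref{lem:independence}), the choice $\lambda = 2\sqrt{e(d-1)}$, and Lemma~\ref{lemma:optimal_pert_bound} applied to $M = \mu^{-1}G$, with the $d=1$ case handled separately in the same trivial way. Your explicit count $d^2-(d-2)^2 = 4d-4$ merely spells out what the paper asserts directly.
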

\begin{proof}
If $d=1$ the result is easy, since $\det G = g_{11} \ge \E[g_{11}] = \mu$
with positive probability.
Thus, we can assume that $d \ge 2$.

Let $\lambda$ be a positive parameter to be chosen later.
As in Theorem~\ref{thm:lower_bd_via_Chebyshev},
for the purposes of this proof
we say that $G$ is \emph{good} if the conditions of
Lemma~\ref{lemma:optimal_pert_bound}
apply with $M = \mu^{-1}G$ and $\ve = \lambda/\mu$.
Otherwise $G$ is \emph{bad}.

Let $E_{ij}$ be the event that 
$|g_{ij}| > \lambda$ (if $i\ne j$)
or $|g_{ii}-\mu| > \lambda$ (if $i=j$).
Thus $G$ is {good} if none of the $E_{ij}$ holds.

{From} Lemma~\ref{lemma:variance} (if $i \ne j$) and
Lemma~\ref{lemma:sigma_asymptotics} (if $i=j$), we have
$\V[g_{ij}] \le 1$ in both cases. Thus, from Chebyshev's inequality,
$\Prob[E_{ij}] \le \lambda^{-2}$.

Now, by Lemma~\ref{lem:independence},
$E_{ij}$ is independent of $E_{k\ell}$ if 
$\{i,j\} \cap \{k,\ell\} = \emptyset$.
Thus, in Proposition~\ref{prop:Lovasz} we can take
$D = 4d-5$, and the proposition shows that $G$
is good with positive probability
provided that $\lambda^2 \ge 4e(d-1)$.  
We take the smallest positive $\lambda$ satisfying this inequality,
i.e.~$\lambda = 2\sqrt{e(d-1)}$.  Now the result follows from
the inequality
\begin{equation}
 \frac{\det G}{\mu^d} \ge 1 - \frac{d\lambda}{\mu}\,\raisecomma
	\label{eq:1dlammu}
\end{equation}
which holds whenever $G$ is good, by Lemma~\ref{lemma:optimal_pert_bound} 
applied to $M = \mu^{-1}G$. 
\end{proof}

\begin{corollary}	\label{cor:Chebyshev_Lovasz}
Under the conditions of Theorem~$\ref{thm:Chebyshev_Lovasz}$,
\[
{\cal R}(n) \ge \left(\frac{2}{\pi e}\right)^{d/2}
		\left(1 - d\sqrt{\frac{2\pi e(d-1)}{h}}\right)
	e^{-d^2/(2h)}\,.
\]
\end{corollary}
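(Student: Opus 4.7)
The plan is to follow the same template as the proof of Corollary~\ref{cor:lower_bd_via_Chebyshev}, combining Theorem~\ref{thm:Chebyshev_Lovasz} with the Schur complement lemma, the lower bound $\mu(h) \ge \sqrt{2h/\pi} + 0.9$ from Lemma~\ref{lemma:sigma_asymptotics}, and the inequality $(h/n)^n > \exp(-d - d^2/h)$ from Lemma~\ref{lemma:uncond2}. Write $c := \sqrt{2/\pi}$ as usual. We may assume that the second factor on the right-hand side is positive, since otherwise the inequality is trivial.

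First, since Theorem~\ref{thm:Chebyshev_Lovasz} asserts the displayed lower bound on $\det(G)/\mu^d$ with positive probability, there exists a choice of $B$ (and hence of $C$ determined by $B$) for which that bound holds. By Lemma~\ref{lemma:Schur} we have $|\det(\widetilde A)| = h^{h/2}\det(G)$, so
\[
D(n) \;\ge\; h^{h/2}\,\mu^d\left(1 - \frac{2d\sqrt{(d-1)e}}{\mu}\right).
\]
Using $\mu \ge c h^{1/2}$ gives $\mu^d \ge c^d h^{d/2}$, hence
\[
D(n) \;\ge\; c^d\, h^{n/2}\left(1 - \frac{2d\sqrt{(d-1)e}}{\mu}\right).
\]
Dividing by $n^{n/2}$ and invoking Lemma~\ref{lemma:uncond2} to bound $(h/n)^{n/2} \ge \exp(-d/2 - d^2/(2h))$, together with $c^d e^{-d/2} = (2/(\pi e))^{d/2}$, yields
\[
\Dbar(n) \;\ge\; \left(\frac{2}{\pi e}\right)^{d/2}
\left(1 - \frac{2d\sqrt{(d-1)e}}{\mu}\right) e^{-d^2/(2h)}.
\]

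The only remaining step is to replace the factor $2d\sqrt{(d-1)e}/\mu$ by the cleaner $d\sqrt{2\pi e(d-1)/h}$ claimed in the statement; equivalently, to verify that
\[
\frac{2d\sqrt{(d-1)e}}{\mu} \;\le\; d\sqrt{\frac{2\pi e(d-1)}{h}}.
\]
Squaring both (nonnegative) sides, this reduces to $\mu^2 \ge 2h/\pi$, which is immediate from $\mu \ge \sqrt{2h/\pi} + 0.9 > \sqrt{2h/\pi}$ (Lemma~\ref{lemma:sigma_asymptotics}). There is no real obstacle here: the proof is a routine unpacking, and the only arithmetical coincidence to notice is that the constant $\sqrt{2/\pi}$ appearing in $\mu$ is exactly what turns $2\sqrt{e}$ into $\sqrt{2\pi e}$ after division by $\mu$. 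Substituting into the previous display completes the proof.
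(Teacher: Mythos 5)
Your proof is correct and follows essentially the same route as the paper, whose own proof is just a sketch deferring to the chain \eqref{eq:C1bd1}--\eqref{eq:C1f}: Schur complement plus Theorem~\ref{thm:Chebyshev_Lovasz}, the bound $\mu > \sqrt{2h/\pi}$ from Lemma~\ref{lemma:sigma_asymptotics}, and Lemma~\ref{lemma:uncond2}, keeping the factor $e^{-d^2/(2h)}$ explicit rather than absorbing it. Your final comparison step (squaring to reduce to $\mu^2 \ge 2h/\pi$) is exactly the intended way to pass from the denominator $\mu$ to the clean $\sqrt{h}$ form in the statement.
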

\begin{proof}[Proof (sketch).]
This is similar to the proof of~\eqref{eq:C1bd1}--\eqref{eq:C1f} above,
using the bound of Theorem~\ref{thm:Chebyshev_Lovasz}
instead of the bound of Theorem~\ref{thm:lower_bd_via_Chebyshev}.
\end{proof}

\begin{remark}
{\rm
In Corollary~\ref{cor:lower_bd_via_Chebyshev} we absorbed
the factor $e^{-d^2/(2h)}$ into the final bound.  We do not attempt
to do this in Corollary~\ref{cor:Chebyshev_Lovasz} because the exponent
of $d$ in the ``main term'' is~$3/2$ rather than~$2$.  However, 
by~\eqref{eq:gamma_Livinskyi} above, $d \ll h^{1/6}$, so
$(d^2/h) / (d^{3/2}/h^{1/2}) = (d/h)^{1/2} \ll h^{-5/12}$,
and the factor $\exp(-d^2/(2h)) = 1 - \Theta(d^2/h)$ is much closer to $1$
than the factor $1 - \Theta(d^{3/2}/h^{1/2})$
if $h$ is large.
}
\end{remark}

\begin{remark}		\label{remark:hd3}
{\rm
Corollary~\ref{cor:Chebyshev_Lovasz} gives a nontrivial lower bound if
$h > 2\pi e d^2(d-1)$.  This is a weaker condition than the condition
$h > \pi d^4/2$ of Corollary~\ref{cor:lower_bd_via_Chebyshev}
(see Remark~\ref{remark:lower_bd_via_Chebyshev}) if $d \ge 10$.
For $2 \le d \le 9$, Corollary~\ref{cor:lower_bd_via_Chebyshev} is
sharper than Corollary~\ref{cor:Chebyshev_Lovasz}.
}
\end{remark}

We can improve on Theorem~\ref{thm:Chebyshev_Lovasz} and
Corollary~\ref{cor:Chebyshev_Lovasz} by treating the diagonal
and off-diagonal elements of $G$ differently.  For the diagonal
elements we can use Cantelli's inequality since ``large'' diagonal elements
are harmless~-- only ``small'' diagonal elements are ``bad''.
For the off-diagonal elements we can use
Hoeffding's inequality, because each off-diagonal element can be
written as a sum of independent random variables (this is not true
for the diagonal elements). 
The Lov\'asz Local Lemma can be applied much as
in the proof of Theorem~\ref{thm:Chebyshev_Lovasz}.
To handle the different bounds on diagonal and off-diagonal elements 
we need Lemma~\ref{lemma:super_duper_perturbation_bound}.
The parameters $\lambda$ and $t$ are chosen so that the probability
of an off-diagonal element being ``bad'' is the same as
the probability of a diagonal element being ``bad'' (more precisely,
our upper bounds on these probabilities are the same).
This choice is not optimal, but simplifies the application of the
Lov\'asz Local Lemma, since we can use the symmetric case
of the Lemma.
For a choice of $\lambda$ and $t$ giving unequal probabilities
(but not using the Lov\'asz Local Lemma),
see Theorem~\ref{thm:two_parameter_bound}.

\begin{theorem}		\label{thm:Cantelli_Hoeffding_Lovasz}
Suppose $d \ge 2$, $4 \le h \in \cal H$, $n = h+d$,
$\lambda 
 = (4e(d-1)-1)^{1/2}\sigma/\mu$,
$t 
 = (2\ln(8e(d-1)))^{1/2}/\mu$,
and $G$ as in~$\S\ref{subsec:properties}$.
If $\lambda + (d-1)t \le 1$,
then with positive probability we have
\begin{equation}
\frac{\det G}{\mu^d} \ge (1 - \lambda-(d-1)t)(1-\lambda+t)^{d-1}.
	\label{eq:two_parameter_boundA}
\end{equation}
\end{theorem}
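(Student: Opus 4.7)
The plan is to proceed in close parallel with the proof of Theorem~\ref{thm:Chebyshev_Lovasz}, but to treat diagonal and off-diagonal entries of $G$ separately so that the tighter one-sided bound from Cantelli's inequality is used on the diagonal, and the exponentially sharp Hoeffding bound is used off the diagonal. Concretely, I would call $G$ \emph{good} exactly when $g_{ii} \ge (1-\lambda)\mu$ for $1 \le i \le d$ and $|g_{ij}| \le t\mu$ for $i \ne j$, so that Lemma~\ref{lemma:super_duper_perturbation_bound} applies to $M = \mu^{-1}G$ with $\delta = \lambda$ and $\ve = t$ (the hypothesis $\lambda + (d-1)t \le 1$ being given). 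For each pair $(i,j)$ I would define a bad event $E_{ij}$ to be $\{g_{ii} - \mu < -\lambda\mu\}$ if $i=j$ and $\{|g_{ij}| > t\mu\}$ if $i \ne j$; then $G$ is good precisely when no $E_{ij}$ occurs.

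For the diagonal events I would apply Cantelli's inequality (Proposition~\ref{prop:Cantelli}) using $\V[g_{ii}]=\sigma^2$ from Lemma~\ref{lemma:variance}. The choice $\lambda\mu = \sqrt{4e(d-1)-1}\,\sigma$ makes
\[
\Prob[E_{ii}] \;\le\; \frac{\sigma^2}{\sigma^2 + (\lambda\mu)^2} \;=\; \frac{1}{4e(d-1)}.
\]
For the off-diagonal events I would use the representation $f_{ij} = \sum_k u_{ik}b_{kj}$ from the proof of Lemma~\ref{lem:fij2}: conditioning on column $i$ of $B$, the $u_{ik}$ become fixed coefficients satisfying $\sum_k u_{ik}^2 = 1$ by Lemma~\ref{lemma:u_sum}, and the remaining $b_{kj}$ are independent $\pm 1$ variables of mean zero. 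Applying Hoeffding (Proposition~\ref{prop:Hoeffding}) conditionally with intervals $[-|u_{ik}|,|u_{ik}|]$ gives $\Prob[\,|g_{ij}| \ge t\mu \mid \text{col } i\,] \le 2\exp(-(t\mu)^2/2)$, and the choice $t\mu = \sqrt{2\ln(8e(d-1))}$ turns this into $1/(4e(d-1))$; since the bound is independent of the conditioning, it holds unconditionally.

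Thus both kinds of bad event obey $\Prob[E_{ij}] \le p := 1/(4e(d-1))$. By Lemma~\ref{lem:independence} each event $E_{ij}$ is a function only of columns $\{i,j\}$ of $B$, hence is mutually independent of all $E_{k\ell}$ with $\{i,j\} \cap \{k,\ell\} = \emptyset$, so as in Theorem~\ref{thm:Chebyshev_Lovasz} one may take $D = 4d-5$ in the symmetric Lov\'asz Local Lemma (Proposition~\ref{prop:Lovasz}). The hypothesis $ep(D+1) \le 1$ becomes $e \cdot \tfrac{1}{4e(d-1)} \cdot (4d-4) = 1$, which holds (with equality), so with positive probability no $E_{ij}$ occurs. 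On that event Lemma~\ref{lemma:super_duper_perturbation_bound} applied to $M=\mu^{-1}G$ yields $\det(\mu^{-1}G) \ge (1-\lambda-(d-1)t)(1-\lambda+t)^{d-1}$, which rearranges to~\eqref{eq:two_parameter_boundA}.

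The main technical point I expect to require care is the Hoeffding step: the summands in $f_{ij} = \sum_k u_{ik}b_{kj}$ are \emph{not} unconditionally independent because the $u_{ik}$ depend on column $i$ of $B$, so one must condition on that column to turn $f_{ij}$ into a genuine sum of independent bounded variables (and then uncondition via the tower property). The choice of the specific formulas for $\lambda$ and $t$ in the theorem is designed precisely to equalise the Cantelli and Hoeffding tail bounds at the common value $p = 1/(4e(d-1))$, which is the value at which the symmetric LLL condition $ep(D+1)\le 1$ is exactly tight with $D = 4d-5$.
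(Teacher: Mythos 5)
Your proposal is correct and follows essentially the same route as the paper's own proof: the same good/bad decomposition of $M=\mu^{-1}G$, Cantelli on the diagonal and (conditioned) Hoeffding off the diagonal with both tail bounds equalised at $p=1/(4e(d-1))$, the symmetric Lov\'asz Local Lemma with $D=4d-5$ via Lemma~\ref{lem:independence}, and finally Lemma~\ref{lemma:super_duper_perturbation_bound}. Your explicit conditioning/tower-property treatment of the Hoeffding step is exactly the point the paper relegates to a footnote, so there is nothing to add.
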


\begin{proof}
Define $M := \mu^{-1}G$.
For the purposes of this proof we 
say that a diagonal element $m_{ii}$ of $M$ is \emph{bad}
if $m_{ii} < 1-\lambda$ (note the one-sided constraint); 
otherwise $m_{ii}$ is \emph{good} (so a good $m_{ii}$ can be large, but
not too small).
We say that an off-diagonal element $m_{ij} \;\; (i\ne j)$
is \emph{bad} if $|m_{ij}| > t$; otherwise $m_{ij}$ is \emph{good}.
We say that $G$ is \emph{good} if all the elements of $M$ are good;
otherwise $G$ is \emph{bad}.
If $G$ is good, then the conditions 
of Lemma~\ref{lemma:super_duper_perturbation_bound} apply to $M$ with
$(\delta,\ve) = (\lambda, t)$.

Define $p := 1/(4e(d-1))$ and $\tau := \sigma/\mu$, so
$\lambda = (1/p-1)^{1/2}\tau$.
By Cantelli's inequality, the probability that a diagonal
element $m_{ii}$ is bad is
\[\Prob[m_{ii} < 1-\lambda] \le \frac{\tau^2}{\tau^2 + \lambda^2} = p\,.\]
We can apply Hoeffding's inequality to the off-diagonal elements
$m_{ij}$ ($i \ne j$) since equation~\eqref{eq:sum_indep_vars} shows that,
in the case $i=1$ (which we consider without loss of generality),
$m_{1j}\; (= f_{1j})$ 
for $1 < j \le d$ is a sum of $h$ independent random variables
$u_{1k}b_{kj}$, where the elements $b_{kj}$ ($1 \le k \le h$)
of the $j$-th column of $B$ are distributed
independently and randomly in $\{\pm1\}$, and the multipliers
$u_{1k}$, which may be regarded as constants since they are independent%
\footnote{They are not independent of the first column of $B$, which is
why the argument does not apply to $m_{11}$ (or $f_{11}$).  
Similarly, the argument does
not apply to other diagonal elements $m_{ii}$ (or $f_{ii}$).}
of the $j$-th column of $B$, satisfy
$\sum_{k=1}^h u_{1k}^2 = 1$ in view of Lemma~\ref{lemma:u_sum}.
Thus $m_{1j}$ is a sum of $h$ independent, bounded random variables, with
bounds $[-|u_{1k}|,+|u_{1k}|]$ ($1 \le k \le h$).
It follows that, by Hoeffding's inequality (Proposition~\ref{prop:Hoeffding}),
the probability that an off-diagonal element
$m_{ij}$ ($i \ne j$) is bad is
\[\Prob[|m_{ij}| > t] \le 2\exp(-\mu^2t^2/2) = p\,.\]
{From} Lemma~\ref{lem:independence},
each $m_{ij}$ depends on at most $4d-4$ of the $m_{k\ell}$,
and it follows from the Lov\'asz Local Lemma
(Proposition~\ref{prop:Lovasz} with $D = 4(d-1)-1$)
and the definition of $p$ that
$\Prob[G\; \text{is good}] > 0$.
Thus, from Lemma~\ref{lemma:super_duper_perturbation_bound}, 
with positive probability we have
\[\det M \ge (1 - \lambda - (d-1)t)(1-\lambda+t)^{d-1}\,.\]
Since $\det G = \mu^d\det M$, this completes the proof.
\end{proof}

\begin{remark}			\label{remark:always_applicable}
{\rm
The condition $\lambda + (d-1)t \le 1$ is equivalent to
\begin{equation}	\label{eq:necessary_cond}
\mu \ge (4e(d-1)-1)^{1/2}\sigma + (d-1)(2\ln(8e(d-1)))^{1/2}\,,
\end{equation}
but $\mu > (2h/\pi)^{1/2}$,
so the condition is satisfied if 
$(d^2\ln d)/h$ is sufficiently small.
A simple sufficient condition is
\begin{equation} 
h \ge \pi d^2(4+\ln d)\,.	\label{eq:simple_cond}
\end{equation}
This can be proved using the
inequalities $\mu > (2h/\pi)^{1/2}$ and $\sigma \le 1/4$; we omit the
details. 
By results of Craigen~\cite{Craigen1} or Livinskyi~\cite{Livinskyi},
the inequality \eqref{eq:simple_cond}
(and hence also~\eqref{eq:necessary_cond}) 
holds for all sufficiently large~$n$ (assuming, as always, that
$h$ is maximal and $d$ minimal with $h+d=n$).
}
\end{remark}

\begin{remark}		\label{remark:thm4}
{\rm
In the proof of Theorem~\ref{thm:lower_bd_via_Chebyshev} we did not use
the Lov\'asz Local Lemma, and we obtained a sharper result than
that of Theorem~\ref{thm:Chebyshev_Lovasz} for $d < 10$ (see
Remark~\ref{remark:hd3}). Similarly, we can improve
Theorem~\ref{thm:Cantelli_Hoeffding_Lovasz} by not using the Lov\'asz
Local Lemma for small~$d$.  Instead of taking $p = 1/(4e(d-1))$ we take
$p = 1/d^2 - \ve$, and later let $\ve \to 0$. In this way we obtain
the inequality~\eqref{eq:two_parameter_boundA} with
$\lambda = (d^2-1)^{1/2}\sigma/\mu$,
$t = (2\ln(2d^2))^{1/2}/\mu$,
which is an improvement on Theorem~\ref{thm:Cantelli_Hoeffding_Lovasz}
for $d < 10$.
}
\end{remark}

\begin{remark}		\label{remark:one_or_the_other}
{\rm
At least one  of Theorem~\ref{thm:small_d} or
Theorem~\ref{thm:Cantelli_Hoeffding_Lovasz} is always
applicable.
In the region $n < 668$ the Hadamard conjecture has been verified,
so $d \le 3$ and Theorem~\ref{thm:small_d} applies.  
Consider the complementary region $n \ge 668$.
For $1 \le d \le 6$ the condition~\eqref{eq:simple_cond} holds.  
For $d \ge 7$ the condition~\eqref{eq:simple_cond} 
is weaker than the condition $h \ge 6d^3$ considered in~\cite{rpb253}.
Thus, it is sufficient to check the $13$ cases
$(h, d) = (h, h'-h+1)$, where the exceptional intervals $(h,h')$ are
listed in~\cite[Table~1]{rpb253}. 
We find numerically that
condition~\eqref{eq:necessary_cond} holds for all of these.
For example, the first entry with $(h,h') = (664,672)$ is covered 
as the right side of~\eqref{eq:necessary_cond} is $19.09\ldots$
but $\mu(664) = 21.55\ldots > 19.09$.
Thus Theorem~\ref{thm:Cantelli_Hoeffding_Lovasz} is always
applicable for $d \ge 4$.
}
\end{remark}

\pagebreak[3]

\begin{corollary} 	\label{cor:Cantelli_Hoeffding_Lovasz}
Under the conditions of Theorem~$\ref{thm:Cantelli_Hoeffding_Lovasz}$, 
we have
\begin{equation}	\label{eq:cor4bd1}
R(n) \ge \left(\frac{2}{\pi e}\right)^{d/2}
  (1 - \lambda-(d-1)t)(1-\lambda+t)^{d-1}e^{-d^2/(2h)}\,.
\end{equation}	
For $d = o(h/\log h)^{1/2}$ 
this gives
\begin{equation}	\label{eq:cor4bd2}
R(n) \ge \left(\frac{2}{\pi e}\right)^{d/2}
   \exp\left(O(d^{3/2}/h^{1/2})\right)\,.
\end{equation}
\end{corollary}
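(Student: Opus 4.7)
The plan has two parts, one for each displayed inequality.

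For~\eqref{eq:cor4bd1}, I would imitate the derivation of Corollary~\ref{cor:lower_bd_via_Chebyshev} verbatim, simply replacing its bound $1-d^2/\mu$ by the bound
$Q := (1-\lambda-(d-1)t)(1-\lambda+t)^{d-1}$
supplied by Theorem~\ref{thm:Cantelli_Hoeffding_Lovasz}. Concretely, since the theorem guarantees $\det(G)\ge \mu^d Q$ with positive probability, some realisation of the random signs achieves this deterministically, and the Schur complement lemma gives $D(n)\ge h^{h/2}\mu^d Q$. Using $\mu\ge c h^{1/2}$ from Lemma~\ref{lemma:sigma_asymptotics} (with $c=\sqrt{2/\pi}$) yields $h^{h/2}\mu^d\ge c^d h^{n/2}Q$, and dividing by $n^{n/2}$ and invoking Lemma~\ref{lemma:uncond2} in the form $(h/n)^{n/2}\ge\exp(-d/2-d^2/(2h))$ produces the prefactor $(2/(\pi e))^{d/2}e^{-d^2/(2h)}$, which is precisely~\eqref{eq:cor4bd1}.

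For~\eqref{eq:cor4bd2} I would insert the explicit values of $\lambda$ and $t$ and pass to logarithms. Since $\sigma=O(1)$ and $\mu=\Theta(h^{1/2})$, we have $\lambda=O((d/h)^{1/2})$ and $t=O((\log d/h)^{1/2})$, so under the hypothesis $d=o((h/\log h)^{1/2})$ both $\lambda$ and $(d-1)t$ are $o(1)$. Expanding $\log(1-x)=-x+O(x^2)$ in the two factors of $Q$ gives
\[
\log Q \;=\; -d\lambda \;+\; O\!\bigl((\lambda+(d-1)t)^2+(d-1)(\lambda-t)^2\bigr),
\]
so that the main term is $-d\lambda=\Theta(d^{3/2}/h^{1/2})$ and the error is $O(d^2\log d/h)$. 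Combined with the $-d^2/(2h)$ from the prefactor in~\eqref{eq:cor4bd1}, the logarithm of the full lower bound, minus $(d/2)\log(2/(\pi e))$, equals $O(d^{3/2}/h^{1/2})$, which exponentiates to~\eqref{eq:cor4bd2}.

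The only nontrivial step is the bookkeeping in the second part: verifying that both the quadratic remainders from the Taylor expansion and the separate contribution $d^2/(2h)$ are dominated by the main term $d\lambda\asymp d^{3/2}/h^{1/2}$. This reduces to checking $d^2\log d/h = o(d^{3/2}/h^{1/2})$, i.e.\ $d(\log d)^2=o(h)$, which is a direct consequence of the hypothesis $d=o((h/\log h)^{1/2})$. Everything else is routine substitution once Theorem~\ref{thm:Cantelli_Hoeffding_Lovasz}, the Schur complement lemma, and Lemmas~\ref{lemma:sigma_asymptotics} and~\ref{lemma:uncond2} are in hand.
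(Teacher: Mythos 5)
Your proposal is correct and follows essentially the same route as the paper: the first inequality is obtained by repeating the derivation of Corollary~\ref{cor:lower_bd_via_Chebyshev} with the bound of Theorem~\ref{thm:Cantelli_Hoeffding_Lovasz}, the Schur complement lemma, $\mu \ge ch^{1/2}$ and Lemma~\ref{lemma:uncond2}; the second is obtained by taking logarithms and using $\lambda = O((d/h)^{1/2})$, $t = O((\ln d)^{1/2}/h^{1/2})$ to show the correction is $O(d^{3/2}/h^{1/2})$. The only cosmetic difference is that you track second-order Taylor remainders (reducing to $d(\log d)^2 = o(h)$), whereas the paper uses first-order bounds on the logarithms under the condition $\lambda + (d-1)t \le 1/2$ and reduces to $d\lambda + d^2/(2h) = O(d^{3/2}/h^{1/2})$; both come down to the same dominant term $d\lambda \asymp d^{3/2}/h^{1/2}$.
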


\begin{remark}
{\rm
Corollary~\ref{cor:Cantelli_Hoeffding_Lovasz} gives a nontrivial bound
for $d \ll h^{1/2 - \ve}$,
whereas Corollary~\ref{cor:Chebyshev_Lovasz} gives nothing useful
if $d \gg h^{1/3+\ve}$.
Also, the constant implied by the ``$O$'' notation is smaller for
Corollary~\ref{cor:Cantelli_Hoeffding_Lovasz}
than for Corollary~\ref{cor:Chebyshev_Lovasz}~-- if both $d$ and $h$
are large, the implied constant in
Corollary~\ref{cor:Cantelli_Hoeffding_Lovasz} is
$(1-3/\pi)\sqrt{2\pi e} \approx 0.186$,
whereas in Corollary~\ref{cor:Chebyshev_Lovasz} the
corresponding constant is $\sqrt{2\pi e} \approx 4.13$.
}
\end{remark}

\begin{proof}[Proof of Corollary~$\ref{cor:Cantelli_Hoeffding_Lovasz}$]
The proof of the inequality~\eqref{eq:cor4bd1}
is similar to the proof of~\eqref{eq:C1bd1}--\eqref{eq:C1f} above,  
using the bound of Theorem~\ref{thm:Cantelli_Hoeffding_Lovasz}
instead of the bound of Theorem~\ref{thm:lower_bd_via_Chebyshev}.

To prove~\eqref{eq:cor4bd2}, it is sufficient to show that
\begin{equation}	\label{eq:d22h}
(1 - \lambda-(d-1)t)(1-\lambda+t)^{d-1}e^{-d^2/(2h)}
 = \exp\left(O(d^{3/2}/h^{1/2})\right)\,.
\end{equation}
Taking logarithms, and assuming for the moment that
\begin{equation}	\label{eq:cor4condit}
\lambda + (d-1)t \le 1/2,
\end{equation}
we see that~\eqref{eq:d22h} is equivalent to showing that
\[
\lambda + (d-1)t + (d-1)(\lambda - t) + \frac{d^2}{2h}
 = O(d^{3/2}/h^{1/2}),
\]
which simplifies to
\begin{equation}	\label{eq:cor4bd3}
d\lambda + \frac{d^2}{2h} = O(d^{3/2}/h^{1/2}).
\end{equation}
Using the definitions of $\lambda$ and $t$, and the facts that
$\sigma = O(1)$ and $\mu \sim ch^{1/2}$, we see that
$\lambda = O((d/h)^{1/2})$, $t = O((\ln d)^{1/2}/h^{1/2})$.
Thus, the dominant term on the left side of~\eqref{eq:cor4bd3}
is $d\lambda = O(d^{3/2}/h^{1/2})$,
and the condition~\eqref{eq:cor4condit} is satisfied (for sufficiently
large $h$) if $d(\ln d)^{1/2}/h^{1/2} = o(1)$. The latter condition
follows from the assumption $d = o(h/\log h)^{1/2}$.
\end{proof}

The following theorem gives asymptotically better results
than Theorem~\ref{thm:Cantelli_Hoeffding_Lovasz}.
The proof uses the independence of the diagonal elements $g_{ii}$
but does not use the Lov\'asz Local Lemma. It might be possible to
sharpen the inequality~\eqref{eq:bad_offdiag_prob} via the
Lov\'asz Local Lemma, but this would complicate the argument while
giving only a small improvement in the final result
(only the right-hand side of~\eqref{eq:nonempty-condit1} and the
function $L(d)$ defined by~\eqref{eq:Ld_defn} would change).

\begin{theorem}		\label{thm:two_parameter_bound}
If $d \ge 2$, $4 \le h \in \cal H$, $n = h+d$,  
$\lambda \in (0,1)$, $t \ge 0$, $\tau = \sigma/\mu$,
$G$, $\sigma$ and $\mu$ as in~$\S\ref{subsec:properties}$, and
\begin{equation}
\exp(\mu^2t^2/2 - d\tau^2/\lambda^2) \ge 2d(d-1)\,,
	\label{eq:nonempty-condit1}
\end{equation}
then
\begin{equation}
\frac{\det G}{\mu^d} \ge (1-\lambda - (d-1)t)(1 - \lambda+t)^{d-1}
	\label{eq:two_parameter_bound}
\end{equation}
occurs with positive probability.
\end{theorem}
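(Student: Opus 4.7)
The plan is to follow the template used for Theorem~\ref{thm:Cantelli_Hoeffding_Lovasz}, but to replace the Lov\'asz Local Lemma with the stronger structural fact that the diagonal entries $g_{ii}$ are mutually independent (Lemma~\ref{lem:independence}). Set $M := \mu^{-1}G$ and call a diagonal entry \emph{bad} if $m_{ii} < 1-\lambda$ (a one-sided condition; a large diagonal only helps), and an off-diagonal entry \emph{bad} if $|m_{ij}| > t$. When every entry of $M$ is good, Lemma~\ref{lemma:super_duper_perturbation_bound} applied to $M$ with $\delta = \lambda$, $\ve = t$ delivers exactly~\eqref{eq:two_parameter_bound} (the implicit requirement $\lambda + (d-1)t \le 1$ needed by that lemma being understood, as the conclusion is vacuous otherwise). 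So it suffices to show $\Prob[\text{all entries of } M \text{ are good}] > 0$.

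The two classes of entry are handled by different concentration tools. Since $\E[g_{ii}] = \mu$ and $\V[g_{ii}] = \sigma^2$, Cantelli's inequality (Proposition~\ref{prop:Cantelli}) yields
\[
\Prob[m_{ii}\text{ bad}] \;=\; \Prob[g_{ii} - \mu \le -\mu\lambda] \;\le\; \frac{\tau^2}{\tau^2 + \lambda^2}.
\]
For an off-diagonal entry I would condition on the $i$-th column of $B$: this freezes the coefficients $u_{ik}$, and Lemma~\ref{lemma:u_sum} gives $\sum_k u_{ik}^2 = 1$. Then $\mu\,m_{ij}$ is a sum of $h$ independent mean-zero variables $u_{ik} b_{kj}$ supported in $[-|u_{ik}|, |u_{ik}|]$, so Hoeffding's inequality (Proposition~\ref{prop:Hoeffding}) yields $\Prob[m_{ij}\text{ bad}] \le 2\exp(-\mu^2 t^2 / 2)$. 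A union bound over the $d(d-1)$ off-diagonal pairs gives $\Prob[\text{some off-diagonal bad}] \le 2d(d-1)\exp(-\mu^2 t^2 / 2)$.

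The key new step, replacing the Local Lemma, uses independence of the diagonal events (Lemma~\ref{lem:independence}) to obtain
\[
\Prob[\text{all diagonals good}] \;\ge\; \left(1 - \frac{\tau^2}{\tau^2 + \lambda^2}\right)^{\!d} \;=\; \left(1 + \frac{\tau^2}{\lambda^2}\right)^{\!-d} \;\ge\; \exp(-d\tau^2/\lambda^2),
\]
the last step being $\ln(1+x) \le x$. Combining with $\Prob[A \cap B] \ge \Prob[A] - \Prob[B^c]$ gives
\[
\Prob[\text{all good}] \;\ge\; \exp(-d\tau^2/\lambda^2) \;-\; 2d(d-1)\exp(-\mu^2 t^2 / 2),
\]
and the hypothesis~\eqref{eq:nonempty-condit1} is exactly what makes the right-hand side non-negative. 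The only subtle point, which I expect to be the sole obstacle, is promoting ``non-negative'' to ``strictly positive''; this comes for free because Lemma~\ref{lemma:sigma_asymptotics} ensures $\sigma(h) > 0$, so $\tau > 0$, so $\ln(1 + \tau^2/\lambda^2) < \tau^2/\lambda^2$ \emph{strictly}, and that strict slack propagates through the chain to give $\Prob[\text{all good}] > 0$, which completes the argument.
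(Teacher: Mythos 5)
Your proof is correct and is essentially the paper's own argument: the same good/bad dichotomy for $M = \mu^{-1}G$, Cantelli's inequality for the diagonal entries, conditioned Hoeffding plus a union bound for the off-diagonal entries, mutual independence of the $g_{ii}$ in place of the Lov\'asz Local Lemma, the combination $\Prob[A\cap B] \ge \Prob[A] - \Prob[B^c]$, and Lemma~\ref{lemma:super_duper_perturbation_bound} to finish. Even the strictness point is handled identically (the paper's strict inequality in its diagonal bound rests on $1/(1+x) > e^{-x}$ for $x>0$, the same fact as your $\ln(1+x) < x$), and the paper likewise leaves the requirement $\lambda + (d-1)t \le 1$ implicit when invoking the perturbation lemma.
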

\begin{remark}
{\rm
The inequality~\eqref{eq:two_parameter_bound} is the same
as the inequality~\eqref{eq:two_parameter_boundA} occurring
in Theorem~\ref{thm:Cantelli_Hoeffding_Lovasz}, but the choice
of $(\lambda, t)$ is different in 
Theorem~\ref{thm:Cantelli_Hoeffding_Lovasz}.
}
\end{remark}
\begin{proof}[Proof of Theorem~$\ref{thm:two_parameter_bound}$]
For the purposes of this proof
we say that a matrix $G$ is \emph{good} if 
$g_{ii} \ge \mu(1-\lambda)$ and 
$|g_{ij}| \le \mu t$
for all $i \ne j$
(this definition is equivalent to the one used in the
proof of Theorem~\ref{thm:Cantelli_Hoeffding_Lovasz}).
{From} Cantelli's inequality we have
\[
\Prob[g_{ii} \ge \mu(1-\lambda)] \ge \frac{\lambda^2}{\tau^2 + \lambda^2}
 > \exp(-\tau^2/\lambda^2)\,.
\]
Since the $g_{ii}$ are independent, we deduce that
\begin{equation}
\Prob[\min \{g_{ii}: 1 \le i \le d\} \ge \mu(1-\lambda)]
 > \exp(-d\tau^2/\lambda^2)\,.		\label{eq:good_diag_prob}
\end{equation}
Also, as in the proof of Theorem~\ref{thm:Cantelli_Hoeffding_Lovasz},
Hoeffding's inequality implies that
\[\Prob[|g_{ij}| \ge \mu t] \le 2\exp(-\mu^2 t^2/2) \text{ for } i \ne j\,.\]
Thus
\begin{equation}
\Prob[\max \{|g_{ij}|: 1 \le i, j \le d, i \ne j\} \ge \mu t] \le 
	2d(d-1)\exp(-\mu^2 t^2/2)\,.	\label{eq:bad_offdiag_prob}
\end{equation}
{From} the inequalities~\eqref{eq:good_diag_prob} 
and~\eqref{eq:bad_offdiag_prob} we see\footnote{%
Informally, \eqref{eq:good_diag_prob} gives a lower bound on 
the probability that the diagonal
elements $g_{ii}$ are all good,
and~\eqref{eq:bad_offdiag_prob} gives an upper bound on the probability 
that at least one of the off-diagonal elements $g_{ij}$ is bad.
If the first probability exceeds the second, then a good $G$ occurs
with positive probability.}
that the condition
\begin{equation}
\exp(-d\tau^2/\lambda^2) \ge 2d(d-1)\exp(-\mu^2 t^2/2)
	\label{eq:nonempty-condit2}
\end{equation}
implies that, with positive probability, a random choice of $B$ gives
a good matrix~$G$.
Thus, \emph{some} choice of $B$ gives a good matrix~$G$.
However, the condition~\eqref{eq:nonempty-condit2}
is equivalent to the condition~\eqref{eq:nonempty-condit1}
in the statement of the Theorem.
To conclude the proof, it suffices to
observe that the lower bound~\eqref{eq:two_parameter_bound}
on $\det(G)$ for a good matrix~$G$ 
follows from Lemma~\ref{lemma:super_duper_perturbation_bound}
applied to $M = \mu^{-1}G$.
\end{proof}
\begin{remark}
{\rm
There are two parameters, $\lambda$ and $t$, occurring in
Theorem~\ref{thm:two_parameter_bound}.
In stating the Theorem we excluded the case $d=1$, because if $d=1$ 
then $t$ is irrelevant
and we may take $\lambda$ arbitrarily close to $0$, giving
$|\det(G)| \ge \mu^d\,,$
as obtained previously by Brown and Spencer~\cite{BS}
and (independently) by Best~\cite{Best}.
To obtain the best bound~\eqref{eq:two_parameter_bound} for $d \ge 2$ we
choose~$t$ so that equality holds in~\eqref{eq:nonempty-condit1}.
Thus, in the following, we choose
\begin{equation}
t^2 = \frac{2(d\tau^2/\lambda^2 + L(d))}{\mu^2}\,,	\label{eq:t}
\end{equation}
where
\begin{equation}		\label{eq:Ld_defn}
	L(d) := \ln[2d(d-1)].
\end{equation}
The optimal $\lambda \in (0,1)$ may be found by a straightforward numerical
optimisation.
In most cases there is no need for this,
as Corollary~\ref{cor:lambda_choice} gives
a result that is close to optimal.
}
\end{remark}

\begin{corollary}	\label{cor:lambda_choice}
Suppose that $1 < d = o(h^{2/5})$ and we choose
\begin{equation}
\lambda = \left(\frac{2d(d-1)\sigma^2}{\mu^4}\right)^{1/3}
	\label{eq:good_lambda}
\end{equation}
in Theorem~$\ref{thm:two_parameter_bound}$.
Then we obtain
\begin{equation}	\label{eq:cor5ineq}
{\cal R}(n) \ge \left(\frac{2}{\pi e}\right)^{d/2}
	\exp\left(d\sqrt{\frac{\pi}{2h}}-\frac{3d\lambda}{2}
	+ O((d\lambda)^{3/2})\right)\,.
\end{equation}
\end{corollary}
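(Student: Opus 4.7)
The plan is to substitute the given $\lambda$ and the $t$ defined by equality in~\eqref{eq:nonempty-condit1} (that is, via~\eqref{eq:t}) into Theorem~\ref{thm:two_parameter_bound}, expand $\ln[(1-u)(1-v)^{d-1}]$ with $u := \lambda + (d-1)t$ and $v := \lambda - t$, and translate the resulting bound on $\det G$ to a bound on $\Dbar(n)$ using the Schur complement lemma together with Lemmas~\ref{lemma:sigma_asymptotics} and~\ref{lemma:uncond2}. As a first step, the equation $\mu^4\lambda^3 = 2d(d-1)\sigma^2$ (from~\eqref{eq:good_lambda}) yields $d\tau^2/\lambda^2 = \mu^2\lambda/(2(d-1))$, so~\eqref{eq:t} reduces to
\[
 t^2 \;=\; \frac{\lambda}{d-1} + \frac{2L(d)}{\mu^2}\,.
\]
Under $d = o(h^{2/5})$, Lemma~\ref{lemma:sigma_asymptotics} gives $\mu = \Theta(h^{1/2})$ and $\sigma = O(1)$; hence $\lambda = \Theta(d^{2/3}/h^{2/3}) = o(1)$ and $(d-1)t = O(\sqrt{d\lambda}) + O(d\sqrt{\log d}/\mu) = o(1)$, so the hypotheses of Theorem~\ref{thm:two_parameter_bound} and Lemma~\ref{lemma:super_duper_perturbation_bound} are met.

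The algebraic heart of the argument exploits two identities. First, $u + (d-1)v = d\lambda$ collapses the linear term in the Taylor expansion of $\ln(1-u) + (d-1)\ln(1-v)$ to $-d\lambda$. Second, $u^2 + (d-1)v^2 = d\lambda^2 + d(d-1)t^2 = d\lambda^2 + d\lambda + 2d(d-1)L(d)/\mu^2$, which gives a quadratic contribution of $-d\lambda/2 + O(d\lambda^2) + O(d^2\log d/h)$. Each error piece is $O((d\lambda)^{3/2}) = O(d^{5/2}/h)$: the ratio $d\lambda^2/(d\lambda)^{3/2} = (\lambda/d)^{1/2} = o(1)$; and $d^2\log d/h = O(d^{5/2}/h)$ holds both for bounded $d$ (both sides being $\Theta(1/h)$) and for $d \to \infty$ since $\log d = o(d^{1/2})$. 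Cubic and higher terms, controlled via $u = O(\sqrt{d\lambda})$ and $(d-1)|v|^k = O(\lambda^{k/2}/d^{k/2-1})$, also sum to $O((d\lambda)^{3/2})$. Therefore, with positive probability,
\[
 \det G \;\ge\; \mu^d \exp\!\bigl(-\tfrac{3}{2}d\lambda + O((d\lambda)^{3/2})\bigr)\,.
\]

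To conclude, I would combine $D(n) \ge h^{h/2}\det G$ (Lemma~\ref{lemma:Schur}) with $\ln\mu = \tfrac12\ln(2h/\pi) + \sqrt{\pi/(2h)} + O(1/h)$ (derived from~\eqref{eq:ineq_mu2}) and $(n/2)\ln(h/n) \ge -d/2 - d^2/(2h)$ (Lemma~\ref{lemma:uncond2}). The additive constants reassemble as $(d/2)\ln(2/\pi) - d/2 = (d/2)\ln(2/(\pi e))$, while the stray errors $d^2/(2h)$ and $d\cdot O(1/h)$ are absorbed into $O((d\lambda)^{3/2})$, since both are $O(d^{5/2}/h)$ for $d \ge 2$. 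This produces
\[
 \ln\Dbar(n) \;\ge\; \frac{d}{2}\ln\!\frac{2}{\pi e} + d\sqrt{\frac{\pi}{2h}} - \frac{3d\lambda}{2} + O((d\lambda)^{3/2}),
\]
and exponentiating gives~\eqref{eq:cor5ineq}.

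The real obstacle is arithmetic bookkeeping rather than any single sharp estimate: every side error --- the $d^2/(2h)$ from Lemma~\ref{lemma:uncond2}, the $O(d/h)$ from the $\ln\mu$ expansion, the $L(d)/\mu^2$ contribution to $t^2$, the $d\lambda^2$ piece of the quadratic, and the Taylor tail --- must be verified to lie in $O((d\lambda)^{3/2})$. These comparisons grow tight precisely at the boundary $d \asymp h^{2/5}$, which is exactly where the stated hypothesis breaks.
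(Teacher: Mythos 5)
Your proposal is correct and follows essentially the same route as the paper: substitute \eqref{eq:good_lambda} and the $t$ from \eqref{eq:t} into Theorem~\ref{thm:two_parameter_bound}, Taylor-expand the logarithm of \eqref{eq:two_parameter_bound} (exploiting $u+(d-1)v = d\lambda$ and the quadratic identity, as the paper does via Remark~\ref{remark:justify_choice_of_lambda}), and convert to a bound on $\Dbar(n)$ using the Schur complement lemma, the expansion of $\mu$ from Lemma~\ref{lemma:sigma_asymptotics}, and Lemma~\ref{lemma:uncond2}, absorbing all side errors into $O((d\lambda)^{3/2}) \asymp d^{5/2}/h$. Your write-up simply makes explicit the error bookkeeping that the paper delegates to Remark~\ref{remark:justify_choice_of_lambda} and to its final absorption step.
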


\begin{remark}	\label{remark:justify_choice_of_lambda}
{\rm
The choice~\eqref{eq:good_lambda} is motivated as follows.
When $\lambda$ and $t$ are small, 
the lower bound~\eqref{eq:two_parameter_bound} is\\
\[\det(G)/\mu^d \ge \exp\left(-d\lambda - \frac{d(d-1) t^2}{2} + 
	O(d^3t^3)\right)\,,\]
so to obtain a good lower bound on $\det(G)$
we should minimise
\[\lambda + \frac{(d-1)t^2}{2}\,\raisedot\]
Since $t^2$ is given by~\eqref{eq:t}, we should minimise
\[\lambda + (d-1)(d\tau^2/\lambda^2 + L(d))/\mu^2\,.\]
Since the term involving $L(d)$ is independent of $\lambda$,
we ignore it and minimize
\[f(\lambda) := \lambda + \frac{d(d-1) \tau^2}{\mu^2\lambda^2}\,.\]
Differentiating $f(\lambda)$
with respect to $\lambda$, setting $f'(\lambda) = 0$,
and using $\tau = \sigma/\mu$,
we obtain~\eqref{eq:good_lambda}.
Also, $\min_{x>0} f(x) = 3\lambda/2$,
where $\lambda$ is given by~\eqref{eq:good_lambda}.

Since $\mu \asymp h^{1/2}$ and $\sigma \asymp 1$,
we see that $\lambda \asymp(d/h)^{2/3}$.
Thus $d\lambda \asymp d^{5/3}/h^{2/3}$, which is asymptotically smaller than
the terms of order $d^{3/2}/h^{1/2}$ occurring in
Theorem~\ref{thm:Chebyshev_Lovasz} and
Corollaries~\ref{cor:Chebyshev_Lovasz}--\ref{cor:Cantelli_Hoeffding_Lovasz}.
Thus Corollary~\ref{cor:lambda_choice} is asymptotically sharper.
This is significant in the proof of Theorem~\ref{thm:almost_all_n} below.
}
\end{remark}
\begin{proof}[Proof of Corollary~$\ref{cor:lambda_choice}$]
Substitution of~\eqref{eq:good_lambda}
and~\eqref{eq:t} into the bound~\eqref{eq:two_parameter_bound},
then taking logarithms
and estimating the errors involved as
in Remark~\ref{remark:justify_choice_of_lambda}, shows that
\begin{equation}	\label{eq:cor5ineq1}
|\det G| \ge 
  \mu^d \exp\left(-\frac{3d\lambda}{2} + 
	O((d\lambda)^{3/2}) + O(d^2/h)\right).
\end{equation}
Now $d\lambda \asymp (d^5/h^2)^{1/3}$, 
so $d\lambda = o(1)$ iff $d = o(h^{2/5})$.
Also, from Lemma~\ref{lemma:sigma_asymptotics},
\begin{equation}	\label{eq:cor5mu}
\mu = \left(\frac{2h}{\pi}\right)^{1/2}
	\exp\left[\left(\frac{\pi}{2h}\right)^{1/2} + 
	  O\left(\frac{1}{h}\right)\right].
\end{equation}
Using the Schur complement lemma and Lemma~\ref{lemma:uncond2},
it follows from~\eqref{eq:cor5ineq1}--\eqref{eq:cor5mu} that
\[\Dbar(n) \ge \left(\frac{2}{\pi e}\right)^{d/2}\exp(\Delta),\]
where
\[
\Delta = d\sqrt{\frac{\pi}{2h}} - \frac{3d\lambda}{2} + 
	O((d\lambda)^{3/2}) + O(d^2/h).
\]
Since $d^2/h \ll (d\lambda)^{3/2} \asymp d^{5/2}/h$, 
the second ``$O$'' term can be subsumed by the first ``$O$'' term.
\end{proof}

We now extend 
Theorem~\ref{thm:small_d} 
to cases $d > 3$, provided that
$n$ is sufficiently large, where the threshold $n_0$ is independent of~$d$.
This improves Theorem~$1$ above, which assumes $d \le 3$.
It also improves Corollary~$2$ of~\cite{rpb253},
where $d > 3$ is allowed,
but the threshold is a rapidly-growing function of~$d$.
\begin{theorem}		\label{thm:almost_all_n}
Assume that $n = h+d$, where $d\ge0$, $h \in {\Had}$, and $d$ is minimal.
There exists an absolute constant $n_0$ such that, for all $n \ge n_0$,
\[\Dbar(n) \ge \left(\frac{2}{\pi e}\right)^{d/2}.\]
Moreover, the inequality is strict if $d > 0$.
\end{theorem}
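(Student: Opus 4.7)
The plan is to combine Corollary~\ref{cor:lambda_choice} with Livinskyi's bound $\gamma(n) = O(n^{1/6})$ from~\eqref{eq:gamma_Livinskyi}. The cases $d = 0$ (trivial) and $1 \le d \le 3$ (Theorem~\ref{thm:small_d}) are already handled, so I focus on $d \ge 4$, and in particular on $n$ large enough that the Livinskyi bound governs the gap.

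First, since $h$ is chosen maximal with $h \le n$ and $h \in \Had$, Livinskyi's bound gives $d \le C h^{1/6}$ for an absolute constant $C$, once $n$ exceeds some absolute threshold. Because $h^{1/6} = o(h^{2/5})$, the hypothesis $1 < d = o(h^{2/5})$ of Corollary~\ref{cor:lambda_choice} is satisfied. The corollary then yields
\[
\Dbar(n) \ge \left(\frac{2}{\pi e}\right)^{d/2}\exp(\Delta), \qquad
\Delta = d\sqrt{\frac{\pi}{2h}} - \frac{3d\lambda}{2} + O\!\bigl((d\lambda)^{3/2}\bigr),
\]
where $\lambda \asymp (d/h)^{2/3}$, so $d\lambda \asymp d^{5/3}/h^{2/3}$.

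To finish, it suffices to show $\Delta > 0$. The dominant term is of order $d/h^{1/2}$, while the correction terms together are $O(d^{5/3}/h^{2/3})$ (since $d\lambda$ is bounded under Livinskyi). Comparing the two orders, the main term dominates precisely when $h^{1/6} \gg d^{2/3}$, i.e., $h \ge K d^4$ for some absolute constant $K$ that can be extracted from the implied constants in Corollary~\ref{cor:lambda_choice}. Substituting the Livinskyi bound $d^4 \le C^4 h^{2/3}$ converts this to $h^{1/3} \ge K C^4$, which holds for every $h$ above a fixed absolute threshold $h_0$. Taking $n_0 := h_0 + \gamma(h_0)$ (also absolute) delivers the theorem for all $n \ge n_0$, with strict inequality because $\Delta$ is strictly positive whenever $d \ge 1$. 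The ``moreover'' clause is then automatic.

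The main obstacle is bookkeeping the implicit constants. One has to (i) make the ``$O$'' from Corollary~\ref{cor:lambda_choice} explicit enough to exhibit a specific $K$, and (ii) invoke a version of Livinskyi's bound with explicit $\alpha = 1/5$, $\beta = 64/5$ (via Lemma~\ref{lemma:conversion}) so that the passage $d^4 \le C^4 h^{2/3}$ holds with an effective $C$ from a specific point on. Once these are in hand, the threshold $n_0$ exists, is absolute (depends on $C$ and $K$ but not on $d$), and in principle could be computed explicitly, as noted after the theorem's statement.
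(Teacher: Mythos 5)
Your proof is correct and takes essentially the same route as the paper's: both combine Corollary~\ref{cor:lambda_choice} with the Livinskyi gap bound~\eqref{eq:gamma_Livinskyi} (so $d \ll h^{1/6}$, hence $d \ll h^{1/4}$, equivalently $h \gg d^4$) to conclude that the positive term $d\sqrt{\pi/(2h)}$ dominates the $O(d\lambda) = O(d^{5/3}/h^{2/3})$ correction once $h$ exceeds an absolute threshold. Your additional bookkeeping (extracting $K$, and setting $n_0 = h_0 + \gamma(h_0)$) merely makes explicit what the paper compresses into ``for sufficiently large $h$''.
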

\begin{proof}
For $d \le 3$ the result follows from Theorem~\ref{thm:small_d}, so
we may assume that $d \ge 4$.
In Corollary~\ref{cor:lambda_choice}, 
$\lambda \asymp (d/h)^{2/3}$.
Thus $\lambda h^{1/2} \asymp 
(d/h^{1/4})^{2/3}$.
{From}~\eqref{eq:gamma_Livinskyi},  
$d = o(h^{1/4})$, so $\lambda = o(h^{-1/2})$.
Thus, for sufficiently large $h$, the argument
$d(\sqrt{\pi/(2h)} - O(\lambda))$
of the exponential in~\eqref{eq:cor5ineq} is positive,
implying that $\Dbar(n) > (2/(\pi e))^{d/2}$.
\end{proof}
\begin{remark}
{\rm 
Using Corollary~$6$ of~\cite{rpb253},
which follows from Theorem~$5.4$ of Livinskyi~\cite{Livinskyi},
we can show that $n_0 = 10^{45}$ is sufficient in
Theorem~\ref{thm:almost_all_n}.
No doubt this value of $n_0$ can be
reduced considerably. Since this paper is long enough, we resist
the temptation to attempt any such reduction here. 
As mentioned in \S\ref{sec:intro},
we conjecture that Theorem~\ref{thm:almost_all_n} holds with $n_0 = 1$.
}
\end{remark}

\section{Numerical examples} 		\label{sec:numerical}

Consider the case $n=668$.
At the time of writing it is not known
whether a Hadamard matrix of this order exists.  Assuming it does not, we
take $h=664$, $d=4$, $n = h+d = 668$. 
Thus 
$\mu \approx 21.55231$, $\sigma^2 \approx 0.04638855$.
Column~$2$ of Table~\ref{tab:664} 
gives various lower bounds on $\det(G)/\mu^d$ (for $G$ that
occurs with positive probability).
These may be converted to lower bounds on $\Dbar(n)$ if desired;
the constant of proportionality is
$\mu^dh^{h/2}/n^{n/2} \approx 0.06583$.
We give $\det(G)/\mu^d$ as it is a useful ``figure of merit'' to compare
different probabilistic approaches~-- the upper limit of these
approaches is $\det(G)/\mu^d = 1$.

Column~$5$ of Table~\ref{tab:664} gives the corresponding bounds
for $d=7$, $n=671$.  This is a difficult case since it is the smallest
with $d=7$ (assuming as before that $664 \not\in\Had$).
Theorem~\ref{thm:lower_bd_via_Chebyshev} and
Remark~\ref{remark:improved_simple} give negative bounds
since $d^4/h$ is too large. Similarly for Theorem~\ref{thm:Chebyshev_Lovasz}
since $d^3/h$ is too large (even when $d=4$). However, 
Theorem~\ref{thm:Cantelli_Hoeffding_Lovasz} gives a useful bound
(in agreement with Remark~\ref{remark:one_or_the_other}),
as does Theorem~\ref{thm:two_parameter_bound}.

The entries in the rows labelled ``Corollary~\ref{cor:lambda_choice}''
use~\eqref{eq:good_lambda} to define~$\lambda$;
the entries in rows labelled ``Theorem~\ref{thm:two_parameter_bound}'' use
optimal values of $\lambda$; 
$t$~is defined by~\eqref{eq:t} in both cases.

\begin{table}[ht]       
\begin{center}          
\begin{tabular}{l|c|c|c||c|c|c|}
\cline{2-7}
& \multicolumn{3}{|c||}{$d=4$, $n=668$}
& \multicolumn{3}{|c|}{$d=7$, $n=671$}\\
\hline
\multicolumn{1}{|c|}{Bound}
	& $|G|/\mu^d$ & $\lambda$ & $t$
        & $|G|/\mu^d$ & $\lambda$ & $t$\\
\hline
\multicolumn{1}{|l|}
{Theorem \ref{thm:lower_bd_via_Chebyshev}}
 & $0.2576$ & --- & ---  & --- & --- & ---\\
\multicolumn{1}{|l|}
{Remark \ref{remark:improved_simple}}
 & $0.3521$ & --- & ---  & --- & --- & ---\\
\multicolumn{1}{|l|}
{Theorem \ref{thm:Cantelli_Hoeffding_Lovasz}}
 & $0.6781$ & $0.05619$ & $0.1341$
 & $0.0742$ & $0.08010$ & $0.1448$\\
\multicolumn{1}{|l|}
{Remark \ref{remark:thm4}}
 & $0.7565$ & $0.03870$ & $0.1222$
 & $0.1326$ & $0.06924$ & $0.1405$\\
\multicolumn{1}{|l|}
{Corollary \ref{cor:lambda_choice}}
 & $0.7975$ & $0.01728$ & $0.1394$
 & $0.1125$ & $0.02624$ & $0.1531$\\ 
\multicolumn{1}{|l|}
{Theorem \ref{thm:two_parameter_bound}}
 & $0.7990$ & $0.01937$ & $0.1352$
 & $0.1667$ & $0.04238$ & $0.1441$\\ 
\hline
\end{tabular}
\caption{Lower bounds for $h = 664$, $d\in\{4,7\}$, $n=h+d$.}\label{tab:664}
\end{center}
\end{table}

\pagebreak[3]
Table~\ref{tab:996} 
gives various lower bounds on $\det(G)/\mu^d$ for the
cases $h = 996$, $d\in\{2,3\}$, so $n\in\{998,999\}$.  
Here 
$\mu \approx 26.17449$ and
$\sigma^2 \approx 0.04594917$.
Lemma~\ref{lemma:expand_det}
is applicable, as $d \le 3$.
Lemma~\ref{lemma:expand_det} does not state an explicit bound for
$\det(G)/\mu^d$; Table~\ref{tab:996} gives the value
$1 - \eta/\mu^d$
that occurs in the proof
of Lemma~\ref{lemma:expand_det}~-- see the inequality~\eqref{eq:G_eta_bound}.
Since $\eta/\mu^d = O_d(h^{-1})$, it is not surprising
that Lemma~\ref{lemma:expand_det} gives the sharpest
bound for $d \le 3$.

\begin{table}[ht]        
\begin{center}
\begin{tabular}{l|c|c|c||c|c|c|}   
\cline{2-7}
& \multicolumn{3}{|c||}{$d=2$, $n=998$}
& \multicolumn{3}{|c|}{$d=3$, $n=999$}\\
\hline
\multicolumn{1}{|c|}{Bound}
        & $|G|/\mu^d$ & $\lambda$ & $t$
        & $|G|/\mu^d$ & $\lambda$ & $t$\\
\hline
\multicolumn{1}{|l|}
{Lemma \ref{lemma:expand_det}}
 & $0.9985$ & --- & ---
 & $0.9910$ & --- & ---\\
\multicolumn{1}{|l|}
{Theorem \ref{thm:lower_bd_via_Chebyshev}}
& $ 0.8472$ & --- & ---
 & $0.6562$ & --- & ---\\
\multicolumn{1}{|l|}
{Remark \ref{remark:improved_simple}}
 & $0.8895$ & --- & ---
 & $0.7160$ & --- & ---\\
\multicolumn{1}{|l|}
{Theorem \ref{thm:Chebyshev_Lovasz}}
 & $0.7480$ & --- & ---
 & $0.4655$ & --- & ---\\
\multicolumn{1}{|l|}
{Theorem \ref{thm:Cantelli_Hoeffding_Lovasz}}
 & $0.9402$ & $0.02573$ & $0.0948$
 & $0.8581$ & $0.03730$ & $0.1049$\\
\multicolumn{1}{|l|}
{Remark \ref{remark:thm4}}
 & $0.9658$ & $0.01418$ & $0.0779$
 & $0.9058$ & $0.02316$ & $0.0919$\\
\multicolumn{1}{|l|}
{Corollary \ref{cor:lambda_choice}}
 & $0.9741$ & $0.00732$ & $0.1066$
 & $0.9287$ & $0.01055$ & $0.1119$\\ 
\multicolumn{1}{|l|}
{Theorem \ref{thm:two_parameter_bound}}
 & $0.9741$ & $0.00733$ & $0.1065$
 & $0.9288$ & $0.01102$ & $0.1010$\\ 
\hline
\end{tabular}
\caption{Lower bounds for $h = 996$, $d\in\{2,3\}$, $n=h+d$.}\label{tab:996}
\end{center}
\end{table}

\pagebreak[4]

\end{document}